\newenvironment{fminipage}%
  {\begin{Sbox}\begin{minipage}}%
  {\end{minipage}\end{Sbox}\fbox{\TheSbox}}
\newtheorem{theorem}{Theorem}
\newtheorem{lemma}[theorem]{Lemma}
\newtheorem{fact}[theorem]{Fact}
\newtheorem{conjecture}[theorem]{Conjecture}
\newtheorem{definition}{Definition}
\newtheorem*{remark*}{Remark}
\def\nnz#1{\mbox{\textsc{nnz}}\left( #1 \right)}
\newcommand{\real}{\text{real}}
\newcommand{\imag}{\text{imag}}
\newcommand{\rank}{\text{rank}}
\newcommand{\matmult}{\text{MM}}
\def\poly#1{\mbox{poly}\left( #1 \right)}
\def\abs#1{\left|#1  \right|}
\def\norm#1{\left\| #1 \right\|}
\newcommand\R{\mathbb{R}}
\newcommand\F{\mathbb{F}}
\newcommand\Otil{\tilde{O}}
\newcommand{\va}{\mathbf{a}}
\newcommand{\vb}{\mathbf{b}}
\newcommand{\vc}{\mathbf{c}}
\newcommand{\vx}{\mathbf{x}}
\newcommand{\vy}{\mathbf{y}}
\newcommand{\vg}{\mathbf{g}}
\newcommand{\vecv}{\mathbf{v}}
\newcommand{\vecvbar}{\widetilde{\mathbf{v}}}
\newcommand{\vv}{\mathbf{v}}
\newcommand{\vh}{\mathbf{h}}
\newcommand{\vw}{\mathbf{w}}
\newcommand{\vwtil}{\widetilde{\mathbf{w}}}
\newcommand{\ma}{\mathbf{A}}
\newcommand{\mb}{\mathbf{B}}
\newcommand{\me}{\mathbf{E}}
\newcommand{\metil}{\widetilde{\mathbf{E}}}
\newcommand{\mf}{\mathbf{F}}
\newcommand{\mh}{\mathbf{H}}
\newcommand{\mg}{\mathbf{G}}
\newcommand{\mk}{\mathbf{K}}
\newcommand{\mc}{\mathbf{C}}
\newcommand{\md}{\mathbf{D}}
\newcommand{\mdtil}{\widetilde{\mathbf{D}}}
\newcommand{\ml}{\mathbf{L}}
\newcommand{\mm}{\mathbf{M}}
\newcommand{\mw}{\mathbf{W}}
\newcommand{\mwtil}{\widetilde{\mathbf{W}}}
\newcommand{\matu}{\mathbf{U}}
\newcommand{\mn}{\mathbf{N}}
\newcommand{\mi}{\mathbf{I}}
\newcommand{\mj}{\mathbf{J}}
\newcommand{\ms}{\mathbf{S}}
\newcommand{\mt}{\mathbf{T}}
\renewcommand{\mp}{\mathbf{P}}
\newcommand{\mq}{\mathbf{Q}}
\newcommand{\mx}{\mathbf{X}}
\newcommand{\mv}{\mathbf{V}}
\newcommand{\my}{\mathbf{Y}}
\newcommand{\mz}{\mathbf{Z}}
\newcommand{\mDelta}{\mathbf{\Delta}}
\newcommand{\mDeltatil}{\widetilde{\mathbf{\Delta}}}
\newcommand{\mbtil}{\widetilde{\mathbf{B}}}
\newcommand{\mctil}{\widetilde{\mathbf{C}}}
\newcommand{\mstil}{\widetilde{\mathbf{S}}}
\newcommand{\mxtil}{\widetilde{\mathbf{X}}}
\newcommand{\mytil}{\widetilde{\mathbf{Y}}}
\newcommand{\mhtil}{\widetilde{\mathbf{H}}}
\newcommand{\mptil}{\widetilde{\mathbf{P}}}
\newcommand{\mttil}{\widetilde{\mathbf{T}}}
\newcommand{\vvtil}{\widetilde{\mathbf{v}}}
\newcommand{\vhtil}{\widetilde{\mathbf{h}}}
\newcommand{\C}{\mathbb{C}}
\newcommand{\N}{\mathbb{N}}
\newcommand{\fro}{\textnormal{F}}
\date{}
\begin{document}

\title{
On Symmetric Factorizations of Hankel Matrices
}
\author{Mehrdad Ghadiri\footnote{Georgia Institute of Technology, \url{ghadiri@gatech.edu}} }

\maketitle
\thispagestyle{empty}
\begin{abstract}
We present two conjectures regarding the running time of computing symmetric factorizations for a Hankel matrix $\mathbf{H}$ and its inverse $\mathbf{H}^{-1}$ as $\mathbf{B}\mathbf{B}^*$ under fixed-point arithmetic. If solved, these would result in a faster-than-matrix-multiplication algorithm for solving sparse poly-conditioned linear programming problems, a fundamental problem in optimization and theoretical computer science. To justify our proposed conjectures and running times, we show weaker results of computing decompositions of the form $\mathbf{B}\mathbf{B}^* - \mathbf{C}\mathbf{C}^*$ for Hankel matrices and their inverses with the same running time. In addition, to promote our conjectures further, we discuss the connections of Hankel matrices and their symmetric factorizations to sum-of-squares (SoS) decompositions of single-variable polynomials.
\end{abstract}
\newpage

\section{Introduction}

Linear system solvers are a workhorse of the modern approach to optimization in which a linear system is solved in each iteration. This approach has been adapted for many problems ranging from graph problems \cite{christiano2011electrical}, to $p$-norm regression \cite{adil2019iterative}, and linear programming \cite{Karmarkar84,vaidya1989speeding,cohen2021solving}. If the linear systems in the problem have a special \emph{structure}, then the structure can usually be exploited to obtain faster algorithms. This has probably been best exemplified by near-linear time Laplacian solvers that have led to improved running times in many graph problems \cite{spielman2014nearly,daitch2008faster,koutis2014approaching,gao2023robust}.

Solving a general linear system and various factorization of matrices can be done in $O(n^3)$ arithmetic (or field) operations. This can be improved using \emph{fast matrix multiplication} techniques to $O(n^\omega)$, where $\omega<2.373$ is the matrix multiplication exponent \cite{alman2021refined,coppersmith1982asymptotic,strassen1969gaussian}. For solving linear systems with structured matrices such as Hankel and Toeplitz, \emph{fast} algorithms with $O(n^2)$ arithmetic operations have been presented \cite{bareiss1969numerical,chun1989fast}. This can be improved to algorithms with $\Otil(n)$ arithmetic operations. These are called \emph{super fast solvers} \cite{ammar1988superfast,van1998stabilized,pan2001structured,xia2012superfast,xi2014superfast}. These are based on finding a \emph{representation} of the inverse that has $\Otil(n)$ size (for example the inverse is constructed by shifting and adding a rank two matrix that can be presented by $4$ vectors). The representation is then applied to the response vector of the linear system, for example, using fast Fourier transform (FFT) techniques \cite{ammar1987generalized}. Note that in such super fast algorithms, the inverse is never written explicitly since it costs $\Omega(n^2)$ to write an $n$-by-$n$ matrix explicitly.

Hankel matrices are a special class of \emph{structured matrices} with many connections to other structured matrices such as Toeplitz, generalized Cauchy, and Vandermonde matrices \cite{chun1989fast,pan2001structured}. They also have many applications in theoretical computer science, including solving sparse linear systems \cite{kailath1979displacement,eberly2006solving,eberly2007faster,peng2021solving,nie2022matrix,casacuberta2021faster} (which itself has applications in improving runtime bounds for convex optimization algorithms \cite{cohen2021solving,van2020deterministic,bubeck2018homotopy,adil2019iterative,ghadiri2021sparse}) and sum-of-squares (SoS) decomposition of single variable polynomials \cite{dym1989hermitian,hasan1996hankel,llovet1987hankel}.

A recent breakthrough of Peng and Vempala \cite{peng2021solving} has shown that a poly-conditioned sparse linear system can be solved faster than matrix multiplication time by using block-Krylov methods. The high-level idea is to form a random block-Hankel matrix from the input matrix and then solve a linear system for this Hankel matrix instead. Although the bit complexity of this Hankel matrix is considerably more than the bit complexity of the input matrix (by a factor of $m<n^{0.25}$), Peng and Vempala showed, with a careful analysis, that the number of bit operations of their algorithm is $o(n^{\omega})$ for any $\omega>2$. Note that the algorithm of \cite{peng2021solving} does not generate an explicit inverse but instead generates a linear operator (an implicit inverse) that can be applied to a vector to solve the linear system.

Since the seminal works of Karmarkar \cite{Karmarkar84} and Vaidya \cite{vaidya1989speeding} on solving linear programs (LPs) using interior point methods (IPMs)
maintaining the inverse of a matrix that goes under low-rank updates has been an important tool in improving the running time of algorithms for optimization problems. This inverse maintenance is done using Sherman-Morrison-Woodbury identity (Fact \ref{fact:Woodbury}) which is equivalent to solving a batch of linear systems, i.e., computing $\ma^{-1} \mb$ for a matrix $\mb$ instead of $\ma^{-1} \vb$, which is solving one linear system.

Although the sparse solver of Peng and Vempala is faster than matrix multiplication for solving one linear system, for a batch of linear systems of size $n$ (i.e., $\mb$ is an $n\times n$ matrix), it is slower than direct methods that compute an explicit inverse that can directly be multiplied by $\mb$ \cite{demmel2007fast,DemmelDH07}. Despite this caveat, the sparse solver has been utilized to improve the running time of $p$-norm regression problems for sparse poly-conditioned matrices beyond matrix multiplication time \cite{ghadiri2021sparse}. This improvement crucially depends on the fact that $p$-norm regression, for fixed $p$, can be solved by an algorithm with $\Otil(n^{1/3})$ iterations \cite{adil2019iterative}. A main idea of \cite{ghadiri2021sparse} is to recompute the linear operator associated with the inverse whenever the rank of the update in Sherman-Morrison-Woodbury identity is large and causes the running time to go above $n^{\omega}$. Since the number of iterations is $\Otil(n^{1/3})$, this recomputation only happens a few times and a total running time of $o(n^{\omega})$ is achieved for $p$-norm regression.

This approach, however, does not work for linear programming problems since the IPMs used for these problems require $\Omega(n^{1/2})$ iterations. We provide more details for this issue in Section \ref{sec:rel-work-and-motiv}. Inspired by this, we propose two conjectures regarding the running time of computing symmetric factorizations of the form $\mb \mb^*$ for Hankel matrices and their inverses, where $\mb^*$ denotes the conjugate transpose of the matrix $\mb$. Due to general displacement structures that we will discuss later, these conjectures have implications for the block-Hankel matrix arising in the block-Krylov approach of \cite{peng2021solving}. In particular, the following are implied by our conjectures.

\begin{enumerate}
    \item The first implication of our conjectures is an algorithm for solving a batch of poly-conditioned linear systems faster than \cite{peng2021solving}. We have computed the running times of solving a batch of linear systems for a matrix with polynomial condition number and $O(n)$ nonzero entries using the online tool of Brand \cite{Complexity} that uses the running times developed in \cite{gall2018improved}. This is illustrated in Table \ref{tab:run-times}. For example, for a batch of size $n^{0.96}$ (i.e., computing $\ma^{-1}\mb$ where $\ma\in\R^{n\times n}$ and $\mb\in\R^{n\times n^{0.96}}$), our approach would give an improvement of $n^{0.016}$ in the running time.

    \item Perhaps the most important implication of our conjectures is an algorithm that solves a linear program  with a sufficiently sparse matrix $\ma$ with polynomial condition number faster than matrix multiplication time. The sufficient sparsity is $o(n^{\omega-1})$ nonzero entries. We discuss this in detail in Section \ref{sec:rel-work-and-motiv}.

    \item In addition, the algorithm developed based on our conjectures improves the running time of the sparse $p$-norm regression algorithm developed in \cite{ghadiri2021sparse}.
    
    \begin{table}[t]
    \caption{Comparison of running times of \cite{peng2021solving} with an algorithm implied by our conjecture for solving a batch of linear systems for a poly-conditioned matrix with $O(n)$ nonzero entries.}
        \centering
        \begin{tabular}{|c|c|c|c|c|c|}
        \hline
            Size of batch & $n^{0.95}$ & $n^{0.96}$ & $n^{0.97}$ & $n^{0.98}$ & $n^{0.99}$ \\
            \hline
            Running time by \cite{peng2021solving} & $n^{2.354}$ & $n^{2.357}$ & $n^{2.361}$ & $n^{2.365}$ & $n^{2.369}$ \\
            \hline
            Running time implied by our conjectures & $n^{2.341}$ & $n^{2.341}$ & $n^{2.349}$ & $n^{2.357}$ & $n^{2.365}$ \\
            \hline
        \end{tabular}
        \label{tab:run-times}
    \end{table}
\end{enumerate}

\paragraph{Outline.} Motivated by these applications, we present necessary definitions and preliminaries for understanding our conjectures and results in Section \ref{sec:prelim}. We then present our conjectures and corresponding results that justify them in Section \ref{sec:res-and-conj}. We discuss the applications of Hankel matrices and the implications of our conjectures, including the implications for solving sparse linear programs, in Section \ref{sec:rel-work-and-motiv}. We then provide a result regarding symmetric factorizations of Toeplitz matrices (which is used as a subprocedure for symmetric factorization of Hankel matrices) in Section \ref{sec:sym-toeplitz}. We present a key identity for Hankel matrices in Section \ref{sec:key-identity} that allows us to design a recursive algorithm for symmetric factorization of them. We then present our results regarding the symmetric factorization of Hankel matrices and their inverses in Section \ref{sec:recursive-algo} and \ref{sec:adv-recursive-algo}, respectively. We finally conclude in Section \ref{sec:conclusion}.

\subsection{Notation and Preliminaries}
\label{sec:prelim}

We consider the entries of our matrices to be in a field $\F$. This can be considered the field of reals $\R$ or complex numbers $\C$. For both of these, our factor matrices $\ma$ and $\mb$ are in $\C$. Our results also extend to finite fields $\F$. In this case, the entries of $\ma$ and $\mb$ are from an extension of field $\F$ that contains the square root of all of the elements of $\F$. For $\R$ and $\C$, we consider fixed-point arithmetic for computation and representing our numbers. In this case, we cannot necessarily represent the square root of our numbers with finitely many bits, but for a number $a$ with $\ell$ bits, we can find a number $b$ with $O(\ell)$ bits such that $\abs{b-\sqrt{a}}<2^{-\ell}$. Therefore for matrices over $\R$ and $\C$, our symmetric factorizations have some small error. For matrices over $\R$ and $\C$, we denote the Frobenius norm and the operator norm by $\norm{\cdot}_{\fro}$ and $\norm{\cdot}_{2}$, respectively. Then we define the condition number of an invertible matrix $\ma$ over $\R$ or $\C$ as $\norm{\ma}_2 \cdot \norm{\ma^{-1}}_2$, and we denote it by $\kappa(\ma)$.

We denote the entry $(j,k)$ of a matrix $\mm$ either by $\mm_{j,k}$ or $\mm(j,k)$. For natural numbers $j_2>j_1$ and $k_2>k_1$, we show the block of $\mm$ with rows $j_1,j_1+1,\ldots,j_2$ and columns $k_1,k_1+1,\ldots,k_2$ with $\mm_{j_1:j_2,k_1:k_2}$. The matrix consisting of rows $j_1,\ldots,j_2$ and all columns is denoted by $\mm_{j_1:j_2,:}$. We denote the $n$-by-$n$ identity matrix with $\mi_{n}$ and if the dimension is clear from the context, we drop the subscript. We denote an $m$-by-$n$ matrix of all zeros with $0_{m\times n}$ and if the dimensions are clear from the context, we drop the subscript. We denote the positive definite (Loewner) ordering by $\preceq$. We denote the running time of multiplying an $n\times m$ matrix with an $m\times k$ matrix with $\matmult(n,m,k)$. Then $n^{\omega}=\matmult(n,n,n)$.

We denote the transposition of a matrix $\mm$ by $\mm^\top$ and its conjugate transposition by $\mm^*$. Note that for real matrices, transposition and conjugate transposition are the same. We also denote the complex conjugate of a number $a\in\C$ by $a^*$. Moreover we define $i=\sqrt{-1}$. For a matrix $\mm$, we denote its real part and imaginary part by $\real(\mm)$ and $\imag(\mm)$, respectively. Note that both $\real(\mm)$ and $\imag(\mm)$ are real matrices and $\mm = \real(\mm) + i \cdot \imag(\mm)$.

We use $\Otil$ notation to omit polylogarithmic factors in $n$ and $\ell$ from the complexity, i.e., for function $f$, $\Otil(f):=O(f\cdot \log^c (n\ell))$ where $c$ is a constant. We denote the set $\{1,\ldots,n\}$ by $[n]$.

We extensively use the shift matrix $\mDelta_n \in \F^{n\times n}$ that is zero everywhere except on the entries under the diagonal for which it is one. For example,
\[
\mDelta_4 = \begin{bmatrix}
0 & 0 & 0 & 0 \\
1 & 0 & 0 & 0 \\
0 & 1 & 0 & 0 \\
0 & 0 & 1 & 0
\end{bmatrix}.
\]
When the dimension of $\mDelta_n$ is clear from the context, we omit the subscript and show the shift matrix by $\mDelta$.
Multiplying a matrix from left by $\mDelta$ ($\mDelta^\top$) shifts the rows of the matrix down (up) by one row and multiplying a matrix from right by $\mDelta$ ($\mDelta^\top$) shifts the columns of the matrix left (right) by one column.
A matrix $\mm$ is symmetric if $\mm=\mm^\top$ and is Hermitian if $\mm = \mm^*$. Also $\mm$ is skew-symmetric if $\mm^\top = -\mm$.
Let $\F$ be a field and $\vh = (h_1,\ldots,h_{2n-1}) \in \F^{2n-1}$ be a vector. Then the corresponding Hankel matrix $\mh$ is defined as $\mh_{ij} = h_{i+j-1}$. For example for $n=4$,
\begin{align}
\label{eq:hankel-example}
    \mh = 
    \begin{bmatrix}
    h_1 & h_2 & h_3 & h_4 \\
    h_2 & h_3 & h_4 & h_5 \\
    h_3 & h_4 & h_5 & h_6 \\
    h_4 & h_5 & h_6 & h_7 \\
    \end{bmatrix}.
\end{align}
For a vector $\mt=(t_1,\ldots,t_n)\in\C^{n}$, where $t_1\in\R$, the corresponding Hermitian Toeplitz matrix $\mt$ is defined as $\mt_{i,j} = t_{j-i+1}$ if $j\geq i$, and $\mt_{i,j} = t_{i-j+1}^*$, otherwise. For example, the Hermitian Toeplitz matrix corresponding to $(t_1,t_2,t_3,t_4)$ is
\[
\mt = \begin{bmatrix}
t_1 & t_2 & t_3 & t_4 \\
t_2^* & t_1 & t_2 & t_3 \\
t_3^* & t_2^* & t_1 & t_2 \\
t_4^* & t_3^* & t_2^* & t_1
\end{bmatrix}.
\]
Note that this can be considered for a general field by extending it using the polynomial root $x^2+1=0$.
It is easy to check that for a Toeplitz matrix $\mt$, $\mt - \mDelta \mt \mDelta^\top$ is of rank two, and for a Hankel matrix $\mh$, $\mDelta \mh - \mh \mDelta^\top$ has rank two. These are called the displacement rank of Toeplitz and Hankel matrices. 
The general definitions are as the following.
\begin{definition}[Displacement rank]
Let $\mm,\matu,\mv\in\F^{n\times n}$. The \emph{Sylvester-type displacement rank} of $\mm$ with respect to $(\matu, \mv)$ is equal to the rank of $ \matu \mm - \mm \mv$. The \emph{Stein-type displacement rank} of $\mm$ with respect to $(\matu, \mv)$ is equal to the rank of $\mm - \matu \mm \mv$.
\end{definition}
For example a Hankel matrix $\mh$ has a Sylvester-type displacement rank of \emph{two} with respect to $(\mDelta,\mDelta^\top )$. This allows us to define the displacement rank for block-Hankel matrices of the following form as well.
\[
\mh = \begin{bmatrix}
    \mh_1 & \mh_2 & \mh_3 & \mh_4 \\
    \mh_2 & \mh_3 & \mh_4 & \mh_5 \\
    \mh_3 & \mh_4 & \mh_5 & \mh_6 \\
    \mh_4 & \mh_5 & \mh_6 & \mh_7 \\
    \end{bmatrix},
\]
where each $\mh_i$ is an $s\times s$ matrix. Then $\mh$ has a Sylvester-type displacement rank of $2s$ with respect to $(\matu,\matu^\top)$, where
\[
\matu = \begin{bmatrix}
    0_{s\times s} & 0_{s\times s} & 0_{s\times s} & 0_{s\times s} \\
    \mi_s & 0_{s\times s} & 0_{s\times s} & 0_{s\times s} \\
    0_{s\times s} & \mi_s & 0_{s\times s} & 0_{s\times s} \\
    0_{s\times s} & 0_{s\times s} & \mi_s & 0_{s\times s} \\
    \end{bmatrix}.
\]
Moreover, the inverse of a Hankel matrix is not Hankel but it has a Sylvester-type displacement rank of \emph{two} with respect to $(\mDelta,\mDelta^\top )$. Similarly the inverse of a Toeplitz matrix is not Toeplitz but it has a Stein-type displacement rank of \emph{two} with respect to $(\mDelta,\mDelta^\top)$.

Note that multiplying a vector by a Hankel or Toeplitz matrix can be done in $\Otil(n)$ time using FFT techniques \cite[Chapter 30]{cormen2022introduction} due to their connections to single-variable polynomials. Finally, the following illustrates the connection between inverse maintenance and solving a batch of linear systems.

\begin{fact}[Sherman-Morrison-Woodbury identity \cite{woodbury1950inverting}]\label{fact:Woodbury}
For an invertible $n \times n$ matrix $\mm$ and matrices $\matu\in \R^{n \times r},\md \in \R^{r \times r},\mv \in \R^{r \times n}$, if $\md$ and $(\mm+\matu \md \mv)^{-1}$ are invertible, then
\[
(\mm+\matu \md \mv)^{-1} = \mm^{-1} - \mm^{-1} \matu (\md^{-1} + \mv \mm^{-1} \matu)^{-1} \mv \mm^{-1}.
\]
\end{fact}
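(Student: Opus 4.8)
The plan is to verify the identity directly: since $(\mm+\matu\md\mv)^{-1}$ is assumed to exist, it suffices to exhibit a left inverse, i.e., to show that the claimed right-hand side, multiplied on the left of $\mm+\matu\md\mv$, yields $\mi$. First I would abbreviate $\ms\defeq\md^{-1}+\mv\mm^{-1}\matu$ (the $r\times r$ matrix being inverted on the right-hand side, which is invertible by hypothesis) and set $\mx\defeq\mm^{-1}-\mm^{-1}\matu\ms^{-1}\mv\mm^{-1}$. Expanding $\mx(\mm+\matu\md\mv)$ and using $\mm^{-1}\mm=\mi$ gives
\[
\mx(\mm+\matu\md\mv)=\mi+\mm^{-1}\matu\md\mv-\mm^{-1}\matu\ms^{-1}\mv-\mm^{-1}\matu\ms^{-1}(\mv\mm^{-1}\matu)\md\mv.
\]

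The key step is to factor $\mm^{-1}\matu$ out on the left and $\mv$ out on the right of the three correction terms, which reduces the claim to showing that the inner $r\times r$ matrix vanishes:
\[
\md-\ms^{-1}-\ms^{-1}(\mv\mm^{-1}\matu)\md=\md-\ms^{-1}\big(\mi+(\mv\mm^{-1}\matu)\md\big)=\md-\ms^{-1}\big(\md^{-1}+\mv\mm^{-1}\matu\big)\md.
\]
Since $\md^{-1}+\mv\mm^{-1}\matu=\ms$, the right-hand side equals $\md-\ms^{-1}\ms\md=\md-\md=0$, so $\mx(\mm+\matu\md\mv)=\mi$ and hence $\mx=(\mm+\matu\md\mv)^{-1}$.

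An alternative I would keep in mind is the block-matrix route: form $\begin{bmatrix}\mm&\matu\\-\mv&\md^{-1}\end{bmatrix}$ and compute the $(1,1)$ block of its inverse two ways, via the Schur complement of the $\mm$ block (which produces the right-hand side of the identity) and via the Schur complement of the $\md^{-1}$ block (which produces $(\mm+\matu\md\mv)^{-1}$). Both approaches carry the same content, so I would present the direct verification, as it needs no auxiliary block-inversion lemma. There is no substantive obstacle here beyond careful bookkeeping of the four terms in the expansion; the only point requiring a moment's thought is recognizing the factorization $\mi+(\mv\mm^{-1}\matu)\md=\ms\md$, which is precisely what makes the correction terms cancel exactly.
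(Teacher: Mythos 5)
The paper states this result as a cited Fact (\cite{woodbury1950inverting}) and includes no proof of its own, so there is no in-paper argument to compare against; you have simply supplied a proof the paper omits. Your direct verification is correct: expanding $\mx(\mm+\matu\md\mv)$, pulling $\mm^{-1}\matu$ and $\mv$ out of the three correction terms, and noting that $\mi+(\mv\mm^{-1}\matu)\md=(\md^{-1}+\mv\mm^{-1}\matu)\md=\ms\md$ makes the bracket collapse to zero, giving $\mx(\mm+\matu\md\mv)=\mi$ and hence the identity. One small imprecision: the invertibility of $\ms=\md^{-1}+\mv\mm^{-1}\matu$ is not literally among the stated hypotheses (which assume $\mm$, $\md$, and $\mm+\matu\md\mv$ invertible); it \emph{follows} from them, e.g.\ by comparing the two Schur complements of the block matrix you mention, so it deserves a one-line justification rather than the phrase ``by hypothesis.'' Your alternative block-matrix route is the other standard derivation and would also be fine; the direct expansion is shorter here.
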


\section{Results and Conjectures}
\label{sec:res-and-conj}

Our first conjecture is about computing a symmetric factorization of positive definite Hankel matrix $\mh$ as $\mb \mb^*$ in linear time. Since $\mb$ is at least $n\times n$ (for a full-rank Hankel matrix), we do not require outputting $\mb$ explicitly. Instead, the output should be an implicit representation of size $\Otil(n\cdot \ell)$ that describes $\mb$. Note that this is similar to the way that Hankel matrices are described as well. For example, if we give $(h_1,\ldots,h_7)$ in \eqref{eq:hankel-example}, then the corresponding Hankel matrix is completely described and this representation has a linear size in $n$.

\begin{conjecture}
\label{conj:sym-hankel}
Let $\mh \in \R^{n\times n}$ be a positive definite Hankel matrix with bit complexity $\ell$. There exists an algorithm that finds a representation of a matrix $\mb$ with $n$ rows, $\Otil(n)$ columns, and bit complexity $\ell$ in time $\Otil(n \cdot \ell)$ such that $\norm{\mh - \mb \mb^*}_{\fro}<\frac{1}{2^{\ell}}$. 
\end{conjecture}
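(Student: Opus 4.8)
\emph{Proof strategy.} The plan is a recursive, divide-and-conquer (``generalized Schur'') scheme that exploits the Sylvester-type displacement structure of $\mh$ with respect to $(\mDelta_n,\mDelta_n^\top)$. Partition $\mh=\left[\begin{smallmatrix}\mh_{11}&\mh_{12}\\ \mh_{12}^\top&\mh_{22}\end{smallmatrix}\right]$ into $\tfrac n2\times\tfrac n2$ blocks; each block is again Hankel, $\mh_{11}\succ 0$, and the Schur complement $\ms:=\mh_{22}-\mh_{12}^\top\mh_{11}^{-1}\mh_{12}\succ 0$. The block $\mathrm{LDL}^\top$ identity then yields the symmetric factorization $\mh=\mb\mb^*$ with $\mb=\left[\begin{smallmatrix}\mb_{11}&0\\ \mh_{12}^\top\mh_{11}^{-1}\mb_{11}&\mb_{22}\end{smallmatrix}\right]$, once $\mh_{11}=\mb_{11}\mb_{11}^*$ and $\ms=\mb_{22}\mb_{22}^*$ are obtained recursively. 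Two conditioning facts keep the recursion benign: by eigenvalue interlacing and the fact that $\ms^{-1}$ is a principal submatrix of $\mh^{-1}$, one has $\kappa(\mh_{11}),\kappa(\ms)\le\kappa(\mh)$, so the poly-conditioning assumption (which makes $\log\kappa(\mh)=\Otil(1)$, whence $\ell$-bit arithmetic is enough) is preserved at every node; and $\mh_{11}$ inherits the displacement rank of $\mh$, so only the $\ms$-branch requires new structural input.

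The crux is to certify that $\ms$ and the off-diagonal factor $\mh_{12}^\top\mh_{11}^{-1}\mb_{11}$ stay \emph{structured}. Since $\mh_{11}^{-1}$ has Sylvester-type displacement rank two and $\mh_{12}$ is Hankel, displacement calculus gives that $\ms$ is Hankel-like of displacement rank $O(1)$; I would invoke the key identity of Section~\ref{sec:key-identity} to produce an explicit $\Otil(n)$-size generator for $\ms$ (and for the off-diagonal factor), so that the recursion runs purely on generators rather than on dense blocks. Base cases and the Toeplitz-structured pieces that arise — a Hankel matrix being the row-reversal of a Toeplitz matrix, and the structured arithmetic of each step being most cleanly organized through Toeplitz-like generators — are handled by the symmetric-factorization subroutine for Hermitian positive definite Toeplitz(-like) matrices of Section~\ref{sec:sym-toeplitz}, which outputs a lower-triangular Toeplitz-like factor with an $\Otil(n)$-size representation. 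Every per-level operation (products and inverses of displacement-structured matrices, plus the Hermitian square roots of the $O(1)\times O(1)$ pivot blocks) runs in $\Otil(n)$ time by FFT-based arithmetic; with $O(\log n)$ levels this is $\Otil(n)$ arithmetic operations and $\Otil(n\ell)$ bit operations, and the column bound $C(n)\le 2C(n/2)+\Otil(1)$ gives $\mb$ with $\Otil(n)$ columns, returned implicitly as the tree of generators.

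The remaining ingredient is the fixed-point error analysis: each Hermitian-square-root step is taken to additive error $2^{-\Theta(\ell)}$, and each Schur complement and structured multiply/inverse is truncated to $\Theta(\ell+\operatorname{polylog}(n))$ bits; using that the condition numbers do not grow down the recursion, these round-off errors must be shown to accumulate (in Frobenius norm) to at most $2^{-\ell}$ at the root, so that carrying $\Otil(\ell)$ bits throughout suffices.

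The main obstacle — and the reason the statement is posed as a conjecture rather than a theorem — is making all of this hold \emph{simultaneously}: bounding the growth of the displacement rank of the iterated Schur complements, bounding the bit complexity of their (approximate) Hermitian square roots, and bounding the accumulation of fixed-point error through $\Theta(\log n)$ levels of FFT-based structured arithmetic, all while insisting on a factorization in the exact form $\mb\mb^*$. What can be established unconditionally is the relaxation to a signed factorization $\mb\mb^*-\mc\mc^*$ (Section~\ref{sec:recursive-algo} and the sequel): replacing the Hermitian square roots of the pivot blocks by always-available indefinite ($\pm1$-signature) factorizations removes the most delicate step, at the price of the extra negative term. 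Upgrading that weaker decomposition to a genuine $\mb\mb^*$ for positive definite $\mh$ within the same $\Otil(n\ell)$ budget is exactly the content of the conjecture.
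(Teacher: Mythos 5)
Conjecture~\ref{conj:sym-hankel} is stated in the paper precisely as an open problem, with no proof offered; the paper's supporting evidence is the weaker Theorem~\ref{thm:sym-hankel}, which gives a signed factorization $\mb\mb^*-\mc\mc^*$ rather than $\mb\mb^*$. You recognize this correctly and frame your writeup as a strategy plus an obstruction analysis, which is the honest stance. The route you sketch, however, is genuinely different from the paper's. You propose a generalized Schur/block-$\mathrm{LDL}^\top$ recursion: split $\mh$ into $2\times 2$ Hankel blocks, recurse on $\mh_{11}$ and on the Schur complement $\mh_{22}-\mh_{12}^\top\mh_{11}^{-1}\mh_{12}$ (both of which stay low displacement-rank under nested displacement operators), and assemble the Cholesky-type factor. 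The paper instead never forms a Schur complement at all: it uses the similarity transformation $\ms=\tfrac12(1+i)\mi+\tfrac12(1-i)\mj$ (Lemma~\ref{lemma:key-identity}) to peel a Hankel matrix into a Hermitian Toeplitz piece (the imaginary part, handled by Theorem~\ref{thm:sym-toeplitz}) plus a centrosymmetric Hankel piece (the real part), and recurses on the latter with block-diagonal copies $\mstil_t$ of $\ms$. The structural invariant in the paper is preserved Hankel/Toeplitz blocks after an orthogonal transform, not preserved positive definiteness of Schur complements.

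Two attributions in your sketch deserve correction. First, you invoke ``the key identity of Section~\ref{sec:key-identity}'' to produce generators for the Schur complement, but Lemma~\ref{lemma:key-identity} has nothing to do with Schur complements; it is the real/imaginary split under the $\ms$ conjugation, and the compressed representation of a Schur complement of a displacement-structured matrix would have to come from standard generator arithmetic (GKO-style), not from that lemma. Second, you describe the paper's $\mb\mb^*-\mc\mc^*$ term as arising from ``replacing the Hermitian square roots of the pivot blocks by indefinite ($\pm 1$-signature) factorizations.'' In the paper's actual proof the negative term has a different origin: the recursion reduces to rank-two Hermitian matrices of the form $\mttil-\mDelta\mttil\mDelta^\top$ or $\mm_{s}-\mm_{s-1}$, and Lemma~\ref{lemma:rank2-symmetric} shows such matrices have exactly one positive and one negative eigenvalue, forcing the signed form $\vv_1\vv_1^*-\vv_2\vv_2^*$ regardless of whether the original $\mh$ was PSD. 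Positivity is lost because the shift/difference operations used to expose the Toeplitz-like structure are inherently indefinite, which is a sharper explanation of the obstruction than ``the pivot step is delicate.'' Your Schur-based route is plausible for the weaker theorem, and your high-level diagnosis of why the PSD upgrade is hard is sound, but the specific mechanism in the paper is not the one you described.
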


Our conjecture over finite fields would require $\mb$ such that $\mh=\mb \mb^*$. In this case, we assume the field operations are performed in $O(1)$ time, and therefore we require a running time of $\Otil(n)$. For matrices over $\C$, we also require $\norm{\mh - \mb \mb^*}_{\fro}<\frac{1}{2^{\ell}}$. To justify our conjecture, we provide an algorithm that runs in the specified running time and computes a representation of a factorization of the form $\mb \mb^* - \mc \mc^*$.

\begin{restatable}{theorem}{symmetricHankel}
\label{thm:sym-hankel}
Let $\mh\in\R^{n\times n}$ be a Hankel matrix with bit complexity $\ell$. There exists an algorithm that finds a representation of matrices $\mb$ and $\mc$, each with $n$ rows, $O(n\log n)$ columns, and bit complexity $\ell$ in time $\Otil(n \cdot \ell)$ such that $\norm{\mh - (\mb \mb^* - \mc \mc^*)}_{\fro}<\frac{1}{2^{\ell}}$.
\end{restatable}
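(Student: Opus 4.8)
The plan is to set up a recursion on the dimension $n$ that exploits the rank-two Sylvester-type displacement structure $\mDelta\mh - \mh\mDelta^\top$ of a Hankel matrix. First I would use the "key identity for Hankel matrices" promised in Section~\ref{sec:key-identity}: write $\mh$ as a block $2\times 2$ matrix with blocks of size roughly $n/2$, where the off-diagonal blocks are Hankel and the diagonal blocks are Hankel; the displacement structure should let me express $\mh$ in terms of a smaller Hankel matrix plus structured correction terms. Concretely, I expect an identity of the shape $\mh = \ms^\top \mh' \ms + (\text{low-displacement-rank Toeplitz-like piece})$ where $\mh'$ is a Hankel matrix of dimension $\approx n/2$, and the correction is something whose symmetric-indefinite factorization is cheap — e.g.\ a sum of a constant number of rank-one terms of the form $\pm\vv\vv^*$, realized via the Toeplitz factorization routine of Section~\ref{sec:sym-toeplitz}. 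Recursing on $\mh'$ gives $\mh' = \mb'\mb'^* - \mc'\mc'^*$ with $\mb',\mc'$ having $\Otil(n)$ columns; substituting back yields $\mh = (\ms^\top\mb')(\ms^\top\mb')^* - (\ms^\top\mc')(\ms^\top\mc')^* + \sum_j \sigma_j \vv_j\vv_j^*$, and the $\pm$ rank-one pieces get absorbed into $\mb$ or $\mc$ depending on sign. The number of columns satisfies $C(n) = C(n/2) + O(1)$ per level, but since each $\ms^\top$ is applied implicitly (it is a shift/embedding operator, representable in $\Otil(1)$ space and applicable in $\Otil(n)$ time via FFT), the total column count is $O(n\log n)$ and each vector has $\Otil(1)$-size implicit description, giving total representation size $\Otil(n)$ as claimed.

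The second ingredient is the Toeplitz sub-procedure: the correction term at each level of the recursion is (a principal submatrix of) a Hermitian Toeplitz-type matrix of small displacement rank, and I would invoke the Section~\ref{sec:sym-toeplitz} result to get its $\mb\mb^* - \mc\mc^*$ factorization in $\Otil(n\ell)$ time with $\Otil(1)$ extra columns. Unrolling the recursion, the total running time is $T(n) = 2\,T(n/2) + \Otil(n\ell)$... no: because the two halves are not independent sub-problems (only one smaller Hankel matrix $\mh'$ appears, not two), it is actually $T(n) = T(n/2) + \Otil(n\ell) = \Otil(n\ell)$. This linear-time recursion is exactly why the running time in the theorem is $\Otil(n\cdot\ell)$ and not $\Otil(n^2)$.

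The third ingredient, and the part I expect to be the main obstacle, is the numerical/bit-complexity control. Because $\mh$ need not be positive definite, the recursion cannot rely on positivity to keep factors bounded; the entries of $\mh'$ (a Schur-complement-like object) and of the intermediate vectors could in principle blow up. I would need to (i) show each recursive Hankel matrix $\mh'$ has bit complexity $\Otil(\ell)$ — presumably via an explicit formula for $\mh'$ in terms of the displacement generators of $\mh$ rather than via Gaussian elimination, so that no division by small pivots occurs; (ii) handle the inexact square roots: replacing $\sqrt{a}$ by $b$ with $|b-\sqrt a|<2^{-\ell}$ introduces a per-vector error, and I must show these errors aggregate (over $O(n\log n)$ vectors and $\log n$ recursion levels) to at most $2^{-\ell}$ in Frobenius norm after a mild increase in the working precision — a standard but careful forward-error accounting, using that the implicit operators $\ms^\top$ are isomet-like (bounded operator norm) so they do not amplify errors; (iii) possibly truncate each computed vector to $\Otil(\ell)$ bits and absorb the truncation error the same way. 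The cleanest route is to first establish the \emph{exact} identity and recursion over an abstract field (which proves the finite-field version and makes the algebra transparent), and only then layer on the fixed-point error analysis for $\R$ and $\C$, choosing the internal precision to be $\ell' = \ell + \Theta(\log(n\ell))$ bits so that the final error bound $\norm{\mh - (\mb\mb^* - \mc\mc^*)}_{\fro} < 2^{-\ell}$ holds and the running time is still $\Otil(n\cdot\ell)$.
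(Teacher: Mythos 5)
Your high-level instinct — that the proof should use a unitary ``fold'' of $\mh$ that splits off a Toeplitz piece and leaves a more symmetric Hankel piece to recurse on, with Section~\ref{sec:sym-toeplitz} as a subroutine and the bit-complexity carefully tracked — matches the paper's strategy (the $\ms = \frac{1}{2}(1+i)\mi + \frac{1}{2}(1-i)\mj$ identity of Lemma~\ref{lemma:key-identity} is essentially the ``key identity'' you conjectured). However, there are two genuine gaps in how you set up the recursion, and they are not cosmetic.

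First, the recursion does \emph{not} halve the dimension. You posit $\mh = \ms^\top \mh' \ms + (\text{Toeplitz})$ with $\mh'$ a Hankel matrix of size $\approx n/2$ and then run a recurrence $T(n) = T(n/2) + \Otil(n\ell)$, $C(n) = C(n/2) + O(1)$. In the paper, $\mh_1 = \real(\ms\mh\ms^*)$ is still an $n\times n$ Hankel matrix — it is merely \emph{centrosymmetric} — and the subsequent steps apply $\mstil_t$, a \emph{block-diagonal} version of $\ms$ with block size $n/2^{t-1}$, to the full $n\times n$ matrix $\mh_{t-1}$. What shrinks is the size of the Hankel \emph{blocks} inside $\mh_t$ (Lemma~\ref{lemma:h-struct}), while the overall matrix stays $n\times n$. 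There is no $n/2$-dimensional object to recurse on, and it is not obvious how one would embed a factorization of an $(n/2)\times(n/2)$ matrix back into an $n\times n$ one while preserving the $\mb\mb^*-\mc\mc^*$ form; the block-diagonal $\mstil_t$ is precisely the mechanism that sidesteps this. Note also that your column recurrence $C(n) = C(n/2)+O(1)$ solves to $O(\log n)$, which contradicts the $O(n\log n)$ you then assert — a symptom of the same confusion.

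Second, the ``correction piece'' at each level is not a constant-rank perturbation. The Toeplitz correction $\mn_1 = i\cdot\imag(\ms\mh\ms^*)$ has full rank in general (only its \emph{displacement} rank is small), so Theorem~\ref{thm:sym-toeplitz} produces factors with $O(n)$ columns, not $O(1)$. Moreover, for $t\ge 2$ the correction $\mn_t = i\cdot\imag(\mstil_t\mh_{t-1}\mstil_t^*)$ is not a single Toeplitz matrix but a $2^{t-1}\times 2^{t-1}$ block matrix of Hermitian Toeplitz blocks with a specific permutation/centrosymmetry structure (Lemma~\ref{lemma:n-struct}); decomposing it requires the shift-and-permute construction of Theorem~\ref{thm:sym-nt-less-k} (building $\matu$, $\ml$ from the diagonal-zero rank-two slice via Lemma~\ref{lemma:rank2-symmetric}, then replicating through $\me_t, \mf_t$), not a direct invocation of Theorem~\ref{thm:sym-toeplitz}. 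The running time $\Otil(n\ell)$ arises because \emph{each} of the $O(\log n)$ levels costs $O(n)$ (each level's factorization is determined by a single eigenpair of a rank-two sparse matrix plus implicit shifts/permutations), not from a dimension-halving recursion. Your remarks on the numerical side — doing the algebra exactly over a field first, then inflating the working precision by $\Theta(\log(n\ell))$ bits — are sound and agree with the paper's treatment.
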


Since Hankel matrices are symmetric, Theorem \ref{thm:sym-hankel} does not require the positive definite condition. Our algorithm gives similar bounds and running times for matrices over $\C$ and over finite fields it finds representations of $\mb$ and $\mc$ such that $\mh = \mb \mb^* - \mc \mc^*$.

The factorization of the form $\mb \mb^* - \mc \mc^*$ has been considered before for Toeplitz matrices and their inverses with the goal of solving linear systems with a Toeplitz matrix in linear time \cite{kailath1978inverses,ammar1987generalized,ammar1988superfast}.
The positive semi-definiteness of $\mb \mb^*$ and $\mc \mc^*$ provides some stability properties for solving linear systems with a Toeplitz matrix \cite{bojanczyk1995stability}.
These algorithms are related to the study of orthogonal polynomials and generally either use the Schur algorithm or Levinson algorithm to compute $\mb$ and $\mc$.
We provide similar results for Toeplitz matrices with a simpler and more straightforward algorithm.

\begin{restatable}{theorem}{symmetricToeplitz}
\label{thm:sym-toeplitz}
Let $\mt\in\R^{n\times n}$ be a Hermitian Toeplitz matrix with bit complexity $\ell$. There exists an algorithm that finds a representation of matrices $\mb$ and $\mc$, each with $n$ rows, $O(n\log n)$ columns, and bit complexity $\ell$  in time $\Otil(n \cdot \ell)$ such that $\norm{\mt - (\mb \mb^* - \mc \mc^*)}_{\fro}<\frac{1}{2^{\ell}}$.
\end{restatable}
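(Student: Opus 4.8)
The plan is to reduce the symmetric decomposition of a Hermitian Toeplitz matrix $\mt$ to that of a Hankel matrix, which then lets us appeal to the machinery developed for Theorem \ref{thm:sym-hankel}, or alternatively to build the decomposition directly from the displacement structure $\mt - \mDelta \mt \mDelta^\top$. Recall that a Hermitian Toeplitz matrix has Stein-type displacement rank two with respect to $(\mDelta,\mDelta^\top)$: concretely $\mt - \mDelta \mt \mDelta^\top = \ggamma_1 \ggamma_1^* - \ggamma_2 \ggamma_2^*$ for two vectors $\ggamma_1,\ggamma_2$ whose entries are simple linear combinations of $t_1,\ldots,t_n$ (this is where the Hermitian hypothesis and $t_1\in\R$ enter — the displacement is itself Hermitian and indefinite of rank at most two, so it splits as a difference of two rank-one positive semidefinite pieces). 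One then unrolls the Stein recursion $\mt = \sum_{k=0}^{n-1} \mDelta^k (\ggamma_1\ggamma_1^* - \ggamma_2\ggamma_2^*) (\mDelta^\top)^k$, and since $\mDelta^k \ggamma$ is just a downward shift of $\ggamma$, the sums $\sum_k (\mDelta^k \ggamma_1)(\mDelta^k \ggamma_1)^*$ and $\sum_k (\mDelta^k \ggamma_2)(\mDelta^k\ggamma_2)^*$ are themselves (lower-triangular-times-its-adjoint) Toeplitz-like products. Collecting the shifted copies of $\ggamma_1$ into the columns of a matrix $\mb$ and the shifted copies of $\ggamma_2$ into a matrix $\mc$ gives $\mt = \mb\mb^* - \mc\mc^*$ exactly over a finite field; each of $\mb,\mc$ has $n$ rows and $O(n)$ columns, and admits an $\Otil(n)$-size implicit representation because it is just a pair of vectors together with the shift pattern (a triangular Toeplitz factor), so it can be applied to a vector in $\Otil(n)$ time via FFT.

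An alternative route, which I expect is what the paper actually uses given the section ordering, is the Hankel$\leftrightarrow$Toeplitz correspondence: the reversal matrix $\mj$ (the anti-identity) converts a Toeplitz matrix into a Hankel matrix via $\mj\mt$ or $\mt\mj$. Since $\mj$ is its own inverse and $\mj = \mj^\top$, if $\mj\mt\mj^\top$ (or the appropriate one-sided product, being careful that we want to stay Hermitian) is Hankel with bit complexity $O(\ell)$, and if Theorem \ref{thm:sym-hankel} gives that Hankel matrix a decomposition $\widehat\mb\widehat\mb^* - \widehat\mc\widehat\mc^*$ with the stated size and accuracy, then conjugating back by $\mj$ yields $\mt = (\mj\widehat\mb)(\mj\widehat\mb)^* - (\mj\widehat\mc)(\mj\widehat\mc)^*$, and left-multiplication by the fixed permutation $\mj$ costs nothing and preserves both the $\Otil(n\ell)$ representation size and the $\frac{1}{2^\ell}$ Frobenius error. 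The one subtlety here is that $\mj\mt$ need not be symmetric, so one must instead work with a two-sided transformation or with $\real(\cdot)$/$\imag(\cdot)$ decompositions to land on an honest symmetric Hankel matrix; tracking the Hermitian-versus-complex-symmetric distinction through this reduction is the bookkeeping cost.

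The main obstacle in either approach is the error and bit-complexity control under fixed-point arithmetic, not the algebraic identity. Because square roots are taken (to split the rank-two Hermitian displacement into $\ggamma_1\ggamma_1^* - \ggamma_2\ggamma_2^*$, and inside the Hankel subroutine), one only gets entries within $2^{-\Omega(\ell)}$ of the true values, and these errors are amplified by the $n$ shifted copies and by any intermediate products; one must argue that working with $O(\ell + \log n)$ bits of precision (absorbed into $\Otil(\ell)$) keeps $\norm{\mt - (\mb\mb^* - \mc\mc^*)}_\fro < \frac{1}{2^\ell}$ after a rescaling of the target accuracy. I would handle this exactly as in the proof of Theorem \ref{thm:sym-hankel}: bound the perturbation of each rank-one term, use that the shift matrices have operator norm $1$ so they do not inflate errors, and sum $O(n)$ contributions, carrying $O(\log(n) + \log(1/\text{final error}))$ extra bits throughout. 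The running time is then $\Otil(n\ell)$ because we perform $O(1)$ FFT-based Toeplitz/Hankel operations and $O(n)$ scalar operations each on $\Otil(\ell)$-bit numbers, plus one invocation of the Theorem \ref{thm:sym-hankel} algorithm which already runs in $\Otil(n\ell)$.
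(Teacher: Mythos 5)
Your first route is essentially the paper's proof: the paper forms the displacement $\mttil - \mDelta\mttil\mDelta^\top$ where $\mttil$ is $\mt$ with its diagonal zeroed out, eigendecomposes this rank-two Hermitian bordered matrix (Lemma~\ref{lemma:rank2-symmetric}) into $\vv_1\vv_1^* - \vv_2\vv_2^*$, unrolls the Stein recursion, collects the $\mDelta^j\vv_i$ into $\mv_1,\mv_2$, and then appends $\sqrt{|t_1|}\,\mi$ to $\mb$ or $\mc$ according to the sign of $t_1$. The only mechanical difference is that you work with $\mt - \mDelta\mt\mDelta^\top$ directly (nonzero $(1,1)$ entry) rather than zeroing the diagonal first; this still yields one positive and one negative eigenvalue whenever the off-diagonal data is nonzero, so the split is valid, but the paper's pre-step makes the rank-two lemma statement cleaner and isolates the sign case to the single scalar $t_1$.

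Your second proposed route, however, is backwards: you guess the paper reduces Toeplitz to Hankel and invokes Theorem~\ref{thm:sym-hankel}, but in fact the dependency runs the other way --- Theorem~\ref{thm:sym-hankel} is proved by recursively splitting the Hankel matrix via Lemma~\ref{lemma:key-identity} into a centrosymmetric Hankel part and a Hermitian Toeplitz part, and then invoking Theorem~\ref{thm:sym-toeplitz} (and its ingredients) on the Toeplitz pieces. Using the Hankel theorem to prove the Toeplitz theorem would be circular. Since you explicitly hedge and also give the direct displacement argument, this does not sink the proposal, but be aware which direction the reduction actually goes.
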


Theorem \ref{thm:sym-toeplitz} is used as a subprocedure for Theorem \ref{thm:sym-hankel}.
The simplicity of our algorithm for Toeplitz matrices allows us to use it in combination with a recursive algorithm that recursively decomposes a Hankel matrix to the sum of $\log(n)$ Toeplitz-like matrices to achieve our main result for decomposition of Hankel matrices.

Our second conjecture is about computing a symmetric factorization for the inverse of a positive definite Hankel matrix. Note that we do not require the inverse as input since the approach of \cite{peng2021solving} can obtain a representation of it that has a linear size in linear time.

\begin{conjecture}
\label{conj:sym-inv-hankel}
Let $\mh \in \R^{n\times n}$ be a positive definite Hankel matrix with bit complexity $\ell$ and condition number bounded by $2^\ell$. There exists an algorithm that finds a representation of a matrix $\mb$ with $n$ rows, $\Otil(n)$ columns, and bit complexity $\ell$ in time $\Otil(n^{\omega/2} \cdot \ell)$ such that $\norm{\mh^{-1} - \mb \mb^*}_{\fro}<\frac{1}{2^{\ell}}$. 
\end{conjecture}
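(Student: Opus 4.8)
The plan is to reduce the symmetric factorization of $\mh^{-1}$ to (i) a symmetric factorization of $\mh$ itself and (ii) a numerically stable structured linear-system solver, paying the $\Otil(n^{\omega/2})$ only in the latter. The starting point is the identity $\mh^{-1}=\mh^{-1}\mh\mh^{-1}$, valid since $\mh$, and hence $\mh^{-1}$, is Hermitian. Running Theorem~\ref{thm:sym-hankel} at precision $\ell'=c\ell$ for a suitable constant $c$ produces, in time $\Otil(n\ell)$, implicit $\Otil(n)$-size representations of $\mb_0,\mc_0$ whose residual $\mathbf{E}_0:=\mh-(\mb_0\mb_0^*-\mc_0\mc_0^*)$ has Frobenius norm below $2^{-\ell'}$. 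Substituting gives $\mh^{-1}=(\mh^{-1}\mb_0)(\mh^{-1}\mb_0)^*-(\mh^{-1}\mc_0)(\mh^{-1}\mc_0)^*-\mh^{-1}\mathbf{E}_0\mh^{-1}$, and since $\norm{\mh^{-1}\mathbf{E}_0\mh^{-1}}_{\fro}\le\norm{\mh^{-1}}_2^2\norm{\mathbf{E}_0}_{\fro}$ and $\kappa(\mh)\le 2^\ell$, the last term is below $2^{-\ell}/2$ once $c$ is large enough. This already yields the weaker $\mb\mb^*-\mc\mc^*$ version with $\mb=\mh^{-1}\mb_0$ and $\mc=\mh^{-1}\mc_0$; to obtain the clean $\mb\mb^*$ form of the conjecture one would instead invoke Conjecture~\ref{conj:sym-hankel} to write $\mh\approx\mb_0\mb_0^*$, so that $\mh^{-1}\approx(\mh^{-1}\mb_0)(\mh^{-1}\mb_0)^*$ with no subtracted term.

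In either case the work that remains is to produce a numerically faithful $\Otil(n)$-size representation of $\mh^{-1}\mb_0$ (and $\mh^{-1}\mc_0$). I would not form these products explicitly: because $\mb_0$ is assembled, through the Hankel-to-Toeplitz recursion behind Theorems~\ref{thm:sym-hankel} and~\ref{thm:sym-toeplitz}, from $O(\log n)$ triangular-Toeplitz-like blocks each of displacement rank $O(1)$, the matrix $\mh^{-1}\mb_0$ again has small displacement rank and is determined by the action of $\mh^{-1}$ on $O(\log n)$ generator vectors; hence $\mb$ can be stored implicitly as the composition of a representation of $\mh^{-1}$ with that of $\mb_0$. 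The bottleneck then becomes the solution of a Hankel system with fixed-point bit complexity $\Otil(\ell)$. I would attempt this with a \emph{blocked} generalized-Schur / Bitmead--Anderson-type divide-and-conquer on the displacement-structured matrix, partitioned into leaf blocks of size $s$: the $\Otil(n/s)$ leaf blocks are handled by dense factorization at cost $\Otil(s^\omega\ell)$ each, the structured merges cost $\Otil(n\ell)$ in total, and $s=n^{(\omega-2)/(2\omega-2)}$ is the value at which these two contributions balance, giving the target $\Otil(n^{\omega/2}\ell)$ — the classical superfast choice $s=1$ is faster but is precisely the source of the numerical instability one is trying to avoid.

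The main obstacle, and the reason this is stated as a conjecture, is proving that the blocked solver is numerically stable with only $\Otil(\ell)$ bits of working precision. The recursion repeatedly forms Schur complements of principal submatrices of $\mh$ and of $\mh^{-1}$, and a priori such submatrices can be far worse conditioned than $\mh$; one must therefore either show — using positive definiteness together with the Hankel displacement structure — that every pivot block stays $2^{O(\ell)}$-conditioned, or incorporate a look-ahead/perturbation scheme and bound the accumulated roundoff against $\kappa(\mh)\le 2^\ell$. The residual gap between what this plan delivers unconditionally and the statement of the conjecture is exactly the $\mb\mb^*$ versus $\mb\mb^*-\mc\mc^*$ distinction: removing the $\mc\mc^*$ term amounts to running the recursion so that every intermediate pivot is positive definite, i.e.\ so that the subtracted part can always be absorbed, which is essentially the content of Conjecture~\ref{conj:sym-hankel} and seems to require a genuinely new idea about symmetric factorizations of structured positive definite matrices.
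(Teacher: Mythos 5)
Note first that the statement you are proving is a \emph{conjecture}: the paper gives no proof of it, and the supporting evidence is Theorem~\ref{thm:sym-inv-hankel}, which delivers only the weaker $\mb\mb^*-\mc\mc^*$ form and does so by an entirely different mechanism (the exchange-matrix similarity of Lemma~\ref{lemma:general-fold}, a recursion that is cut off at block size $\sqrt n$ because the Sylvester displacement rank doubles at each level, and an eigendecomposition of the rank-$O(2^t)$ generators at each level). So your write-up should be read as a \emph{reduction}, and as such part of it is a genuine and useful observation: starting from $\mh^{-1}=\mh^{-1}\mh\mh^{-1}$ and $\mh\approx\mb_0\mb_0^*$ gives $\mh^{-1}\approx(\mh^{-1}\mb_0)(\mh^{-1}\mb_0)^*$, with the residual $\mh^{-1}\mathbf{E}_0\mh^{-1}$ controllable from $\kappa(\mh)\le 2^\ell$ after a constant-factor precision boost. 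This cleanly shows that Conjecture~\ref{conj:sym-inv-hankel} would follow from Conjecture~\ref{conj:sym-hankel} plus an accurate structured application of $\mh^{-1}$, a link the paper does not make explicit (the only related remark in the paper is the different sufficient condition $\mb\mb^*\preceq\poly{n}\,\mh^{-1}$ in the conclusion, which would instead make the two-sided factorization of Theorem~\ref{thm:sym-inv-hankel} usable).

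Where the proposal goes astray is in what remains. Your stated reason for the blocked Bitmead--Anderson solver is that the superfast choice $s=1$ ``is precisely the source of the numerical instability one is trying to avoid.'' That is the opposite of the situation the paper is built on: the central contribution of \cite{peng2021solving}, cited immediately before Conjecture~\ref{conj:sym-inv-hankel}, is that the superfast displacement-based solver for (block-)Hankel matrices \emph{is} stable under fixed-point arithmetic with only polylogarithmic bit growth, and the paper explicitly relies on it to obtain a linear-size representation of $\mh^{-1}$ in $\Otil(n\ell)$ time. Stability of the superfast solve is therefore not the open problem, the blocked detour is unmotivated, and (incidentally) your balancing is off: $\Otil(n/s)$ leaves at $\Otil(s^\omega\ell)$ each give $\Otil(ns^{\omega-1}\ell)$, which does not balance against the $\Otil(n\ell)$ merge cost at $s=n^{(\omega-2)/(2\omega-2)}$ --- the leaf cost simply dominates and happens to evaluate to $\Otil(n^{\omega/2}\ell)$. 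More importantly, the genuine open difficulty that the conjecture isolates --- producing a one-sided factor $\mb$, with no subtracted $\mc\mc^*$, whose entries themselves carry only $\Otil(\ell)$ bits --- is exactly the content of Conjecture~\ref{conj:sym-hankel}, which your reduction assumes rather than attacks. Theorem~\ref{thm:sym-inv-hankel}'s recursion is designed so that each piece of the factor is an explicit, low-bit-complexity shift of an eigenvector of a small Hermitian matrix, and your route would still need a structural argument of that kind (or a proof of Conjecture~\ref{conj:sym-hankel}) to close the gap.
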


Again for finite fields, we require $\mh^{-1}=\mb \mb^*$ and a running time of $\Otil(n^{\omega/2})$. Note the difference between the running time of Conjecture \ref{conj:sym-hankel} and Conjecture \ref{conj:sym-inv-hankel}. This is because of the running time that we can achieve for the factorization of the form $\mb\mb^* - \mc \mc^*$ in the following result.

\begin{restatable}{theorem}{symmetricInverseHankel}
\label{thm:sym-inv-hankel}
Let $\mh \in \R^{n\times n}$ be a Hankel matrix with bit complexity $\ell$ and condition number bounded by $2^\ell$. There exists an algorithm that finds a representation of matrices $\mb$ and $\mc$, each with $n$ rows, $O(n\log n)$ columns, and bit complexity $\ell$ in time $\Otil(n^{\omega/2} \cdot \ell)$ such that $\norm{\mh^{-1} - (\mb \mb^* - \mc \mc^*)}_{\fro}<\frac{1}{2^{\ell}}$.
\end{restatable}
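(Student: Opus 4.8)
### Proof Proposal for Theorem~\ref{thm:sym-inv-hankel}

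The plan is to reduce the inverse problem to the non-inverse problem (Theorem~\ref{thm:sym-hankel}) together with a small-dimensional correction coming from the displacement structure. The starting point is the observation that $\mh^{-1}$ is not Hankel, but it has Sylvester-type displacement rank two with respect to $(\mDelta,\mDelta^\top)$. Concretely, one writes down the Gohberg--Semencul-type formula for $\mh^{-1}$: there exist vectors $\uu,\vv\in\C^n$ (computable from $\mh$) such that $\mh^{-1}$ decomposes as a sum of two products of triangular Hankel/Toeplitz matrices built from $\uu$ and $\vv$. Since $\mh^{-1}$ is symmetric, this representation can be symmetrized so that $\mh^{-1} = \mL_1 \mL_1^\top - \mL_2 \mL_2^\top$ modulo terms of the form $\mL_i \mL_j^\top + \mL_j \mL_i^\top$; the point is that each $\mL_i$ is itself a \emph{triangular Toeplitz} matrix, to which I can directly apply the Toeplitz routine of Theorem~\ref{thm:sym-toeplitz} or, more simply, recognize that a product of two triangular Toeplitz matrices is again Toeplitz-like with small displacement rank. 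First, then, I would obtain $\uu$ and $\vv$: solving the two linear systems $\mh\uu = \ee_1$ and $\mh\vv=\ee_n$ costs $\Otil(n^{\omega/2})$ bit operations via the block-Krylov solver of Peng--Vempala~\cite{peng2021solving} (this is where the $n^{\omega/2}$ in the running time, as opposed to the near-linear $\Otil(n)$ of Theorem~\ref{thm:sym-hankel}, enters), and the condition-number bound $\kappa(\mh)\le 2^\ell$ guarantees that $\ell$-bit fixed-point arithmetic suffices to control the error in $\uu,\vv$ and hence in the final factorization.

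The core algebraic step is to turn the displacement identity $\mDelta\mh^{-1} - \mh^{-1}\mDelta^\top = \ww_1\ww_2^\top - \ww_3\ww_4^\top$ (rank two, with the $\ww_i$'s again computable from $\uu,\vv$) into an explicit sum. I would use the standard fact that any matrix $\mm$ with $\mDelta\mm - \mm\mDelta^\top$ of rank $r$ can be written as $\sum_{k=1}^{r} \mathrm{Toep}(\aa_k)\,\mathrm{Toep}(\bb_k)^\top$-type sums via "summing the displacement": iterating the displacement equation telescopes because $\mDelta$ is nilpotent. This yields $\mh^{-1}$ as an $O(1)$-term sum of products of lower- and upper-triangular Toeplitz matrices. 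Each such product $\mathbf{L}\mathbf{U}$, being Toeplitz-like of displacement rank $O(1)$, is then fed into the machinery already developed for Hankel/Toeplitz decomposition: I apply the key identity of Section~\ref{sec:key-identity} and the recursive algorithm of Section~\ref{sec:recursive-algo} to split it into $O(\log n)$ Toeplitz-like pieces, each handled by Theorem~\ref{thm:sym-toeplitz}, producing $\mb$- and $\mc$-type factors with $O(n\log n)$ total columns and $\ell$-bit entries. Collecting the $\mb$-pieces into one block column $\mb$ and the $\mc$-pieces into $\mc$ gives the claimed $\mh^{-1} = \mb\mb^* - \mc\mc^*$ up to the target error; the column counts add up to $O(n\log n)$ since only constantly many displacement terms are produced.

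The two places I expect friction are the \emph{symmetrization} and the \emph{error propagation}. For symmetrization: the raw Gohberg--Semencul formula is not literally a difference of two symmetric PSD-looking products, so one must be careful to split the cross terms $\mL\mathbf{U}^\top + \mathbf{U}\mL^\top$ using an identity like $\mL\mathbf{U}^\top + \mathbf{U}\mL^\top = \tfrac12(\mL+\mathbf{U})(\mL+\mathbf{U})^\top - \tfrac12(\mL-\mathbf{U})(\mL-\mathbf{U})^\top$ — harmless in exact arithmetic but it must be checked that the resulting factors stay Toeplitz-like with $O(1)$ displacement rank so the recursion of Section~\ref{sec:recursive-algo} still applies, and that complex entries introduced by square roots (as in the $\C$ and finite-field cases) are tracked consistently through $\mm^*$ versus $\mm^\top$. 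For error propagation: the $\uu,\vv$ returned by the sparse solver carry additive error $\approx 2^{-\Theta(\ell)}$, this is amplified by the $\ell$-bit-conditioned triangular Toeplitz multiplications (each a convolution, so $\Otil(1)$-factor blow-up per level and $\Otil(\log n)$ levels) and by the Toeplitz sub-factorizations, so I would run everything at a working precision of $O(\ell + \log n)$ bits, invoke the standard stability estimates for FFT-based Toeplitz multiplication, and verify that the final Frobenius error is below $2^{-\ell}$ — routine but bookkeeping-heavy, and it is the step most likely to force a constant-factor increase in the bit budget that must then be confirmed not to disturb the $\Otil(n^{\omega/2}\cdot\ell)$ running time.
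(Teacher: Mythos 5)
Your proposal takes a genuinely different route from the paper, but it contains a misconception about where the $n^{\omega/2}$ factor comes from, and it misapplies the Hankel-specific machinery of Sections~\ref{sec:key-identity}--\ref{sec:recursive-algo}. You attribute the $n^{\omega/2}$ to the cost of the two linear solves $\mh\uu=\ee_1$, $\mh\vv=\ee_n$, but for a scalar Hankel matrix those solves cost $\Otil(n\cdot\ell)$: the paper's own proof states explicitly that a representation of $\mh^{-1}$ can be obtained in $\Otil(n\cdot\ell)$ time via~\cite{peng2021solving}. The $n^{\omega/2}$ in the paper instead comes from the recursion itself. The paper's Lemma~\ref{lemma:general-fold} (the analogue of the key identity for a matrix that is \emph{not} Hankel but only of small Sylvester displacement rank) shows that applying $\ms\cdot\ms^*$ \emph{doubles} the displacement rank of the blocks at each level, so the recursion must stop after $k/2$ levels when the blocks are $\sqrt n\times\sqrt n$; the rank-$\Theta(2^t)$ blocks at level $t$ must then be handled by a dense eigendecomposition, and summing that cost over levels yields $\Otil(n^{\omega/2}\cdot\ell)$. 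Your plan to "apply the key identity of Section~\ref{sec:key-identity} and the recursive algorithm of Section~\ref{sec:recursive-algo}" to the Toeplitz-like products $\mL\mU$ cannot work as stated: Lemma~\ref{lemma:key-identity} yields a (centrosymmetric) \emph{Hankel} real part and a (skew-symmetric) \emph{Toeplitz} imaginary part only when the input is itself Hankel, and Lemma~\ref{lemma:h-struct} / the structure exploited in Algorithm~\ref{alg:hankel-sym-main-algo} depends on the blocks remaining Hankel at every level. For an input of small displacement rank that is not Hankel, neither claim holds; this is precisely why the paper has to prove Lemma~\ref{lemma:general-fold} and switch to Algorithm~\ref{alg:sylvester-sym-main-algo}, which truncates the recursion.

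Separately, the Gohberg--Semencul-plus-symmetrization idea you sketch is interesting but under-examined. If it worked exactly as you describe -- the GS generators computed in $\Otil(n\cdot\ell)$ time and the cross terms split via $\mA\mB^\top+\mB\mA^\top = \tfrac12(\mA+\mB)(\mA+\mB)^\top-\tfrac12(\mA-\mB)(\mA-\mB)^\top$ with all factors remaining triangular Toeplitz -- you would in fact obtain a factorization with $n\times O(n)$ factors of constant displacement rank in $\Otil(n\cdot\ell)$ total time, which is \emph{strictly faster} than the theorem you set out to prove. That should be a red flag: either there is a hidden obstruction you have not tracked down (the division by the corner block of the GS formula and the associated bit-complexity/operator-norm control of the resulting $\mb,\mc$ is the natural suspect, particularly if one wants the statement to generalize to the block-Hankel matrices that actually arise from the block-Krylov construction and motivate the theorem), or you have misread the target running time. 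Either way, you need to identify which step of your plan is responsible for the $n^{\omega/2}$, because right now no step in your proposal actually costs that much. The paper's proof is deliberately written for any matrix of small Sylvester displacement rank (not only $\mh^{-1}$), and the displacement-rank-doubling phenomenon of Lemma~\ref{lemma:general-fold} is the structural reason the $n^{\omega/2}$ appears; your proposal bypasses that lemma entirely and therefore cannot be reconciled with the claimed bound without further work.
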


The result of Theorem \ref{thm:sym-inv-hankel} is actually more general than the inverse of Hankel matrices. The algorithm we present can find such a factorization in the specified time for any matrix that has a Sylvester-type displacement rank of two with respect to $(\mDelta, \mDelta^\top)$ and it can be generalized to block matrices as described in Section \ref{sec:prelim}. 

The main reason for the running time difference between Theorem \ref{thm:sym-hankel} and Theorem \ref{thm:sym-inv-hankel} is the recursion in our algorithm. For Theorem \ref{thm:sym-hankel}, our recursion starts with the $n\times n$ Hankel matrix and modifies it to a matrix with four blocks of size $\frac{n}{2}\times \frac{n}{2}$ where each block itself is Hankel, i.e., the displacement rank of the blocks is the same as the larger matrix. It then continues this process for $O(\log n)$ iterations. However, for general matrices with small Sylvester-type displacement rank, when we apply the recursion, the Sylvester-type displacement rank of the blocks is doubled. This forces us to stop the recursion when the size of the blocks is $\sqrt{n}$ and results in the running time proportional to $n^{\omega/2}$.

\section{Motivation and Related Work}
\label{sec:rel-work-and-motiv}

Hankel matrices have many connections to Toeplitz matrices. One can see that reversing the order of rows or columns of a Hankel matrix results in a Toeplitz matrix and vice versa. Therefore solving a linear system for Toeplitz matrices implies a solver for Hankel matrices as well. Therefore many works have focused on Toeplitz matrices. However, there are some applications that are specifically directed to Hankel matrices. Examples are linear system solvers based on block Krylov matrices (that are used to solve linear systems with general poly-conditioned sparse matrices \cite{peng2021solving,nie2022matrix}) and sum-of-squares (SoS) decomposition of single-variable polynomials. 

Here we first discuss sparse linear system solvers based on block-Krylov methods in Section \ref{subsec:faster-batch-solve} and explain how our conjecture leads to faster algorithms for solving a batch of linear systems. Then in Section \ref{subsec:fast-sparse-lp}, we explain how this leads to a faster algorithm for solving sparse poly-conditioned linear programs faster than matrix multiplication time. We finally discuss the connection of Hankel matrices to the sum-of-squares (SoS) decomposition of single variable polynomials in Section \ref{subsec:sos}.

\subsection{Faster Sparse Linear System Solvers for Batch Problems}
\label{subsec:faster-batch-solve}

We start by describing the block-Krylov approach that has resulted in faster sparse linear system solvers for matrices over rational numbers \cite{eberly2006solving,eberly2007faster}, fixed-point arithmetic \cite{peng2021solving,nie2022matrix}, and finite fields \cite{casacuberta2021faster}.

\paragraph{Linear system solvers based on block-Krylov matrices.} 
To solve a linear system $\ma \vx = \vb$, this approach forms a block Krylov matrix 
\[
\mk=\begin{bmatrix}
\mg & \ma \mg & \ma^2 \mg & \cdots & \ma^{m-1} \mg
\end{bmatrix} \in \R^{n\times n},
\] 
where $\mg$ is a sparse $n$-by-$s$ random matrix, and $m\cdot s = n$. If the matrix $\ma$ is sparse, for example, its number of nonzero entries is $O(n)$, then $\mk$ can be formed quickly. Note that $\ma^{i+1} \mg$ can be obtained from $\ma^i \mg$ by multiplying it with $\ma$.
More specifically, for $\ma$ with constant bit complexity, $\mk$ can be formed in time $\Otil(\nnz{\ma} \cdot s \cdot m^2)=\Otil(\nnz{\ma} \cdot n \cdot m)$, where the $\nnz{\ma} \cdot s$ factor comes from the time that takes to multiply $\ma$ by an $n$-by-$s$ matrix. One of the factors of $m$ comes from the number of such matrix multiplications we need to perform and the other one comes from the bit-complexity of the resulting matrices, e.g., the entries of $\ma^{m-1} \mg$ need $\Otil(m)$ bits. For a small enough $m$ (for example, $m\approx n^{0.01}$), $\Otil(\nnz{\ma} \cdot n \cdot m)$ is smaller than the matrix multiplication time if $\nnz{\ma} \ll n^{\omega-1}$.

Then the inverse of $\ma$ is presented by $\mk(\mk^\top \ma \mk)^{-1} \mk^\top$. Note that for symmetric $\ma$, $\mk^\top \ma \mk$ is a block-Hankel matrix of the following form
\[
\mk^\top \ma \mk = \begin{bmatrix}
\mg^\top \ma \mg & \mg^\top \ma^2 \mg & \mg^\top \ma^3 \mg & \cdots & \mg^\top \ma^m \mg \\
\mg^\top \ma^2 \mg & \mg^\top \ma^3 \mg & \mg^\top \ma^4 \mg & \cdots & \mg^\top \ma^{m+1} \mg \\
\mg^\top \ma^3 \mg & \mg^\top \ma^4 \mg & \mg^\top \ma^5 \mg & \cdots & \mg^\top \ma^{m+2} \mg \\
\vdots & \vdots & \vdots & \ddots & \vdots \\
\mg^\top \ma^m \mg & \mg^\top \ma^{m+1} \mg & \mg^\top \ma^{m+2} \mg & \cdots & \mg^\top \ma^{2m-1} \mg \\
\end{bmatrix} \in \R^{n\times n}.
\]
Note that the symmetry assumption for $\ma$ is not a limitation since we can instead consider the linear system $\ma^\top \ma \vx = \ma^\top \vb$, which has a symmetric matrix.
One can think of this matrix as an $m$-by-$m$ Hankel matrix where each entry is a $s$-by-$s$ matrix with bit-complexity of $\Otil(m)$. Therefore multiplying any two entries of this matrix together costs $\Otil(s^\omega \cdot m)$. Moreover $\mk^\top \ma \mk$ can be multiplied with an $n\times s$ matrix in time $\Otil(s^\omega \cdot m^2)$ by using fast Fourier transform (see \cite{peng2021solving} for details). Finally, note that $\mk^\top \ma \mk$ can be formed in time $\Otil(\nnz{\ma} \cdot n \cdot m)$ similar to the approach we described above for computing $\mk$.

\paragraph{Fast and super fast solvers for block-Hankel matrices.}
To discuss the running time of inverting the block-Hankel matrix $\mk^\top \ma \mk$ or applying the inverse to a block-matrix (or a vector), we need to consider the number of block operations. One can think of each block operation as multiplying two blocks of $\mk^\top \ma \mk$ together. These blocks are $s\times s$ and have bit complexity $m$. So multiplying them by fast matrix multiplication \cite{alman2021refined} and using FFT to multiply the corresponding numbers in linear time results in a running time of $\Otil(s^\omega \cdot m)$. Therefore an algorithm that takes $k$ block operations runs in time $\Otil(s^\omega \cdot m \cdot k)$ with the assumption that the bit complexity stays the same during the algorithm.

Therefore fast solvers that need $m^2$ operations are slow for inverting $\mk^\top \ma \mk$, since they result in a total cost of $s^\omega \cdot m^3 > n^{\omega}$. Thus one needs to use super fast solvers for the matrix $\mk^\top \ma \mk$. Most of the classical super fast solvers are either based on orthogonal polynomials \cite{ammar1987generalized,ammar1988superfast} or based on the conversion of Hankel matrix to generalized Cauchy and hierarchically semi-separable (HSS) matrices \cite{xia2012superfast,xi2014superfast} that admit low-rank properties for off-diagonal blocks. The caveat of these methods is that they blow up the bit complexity of $L$ to at least $L^2$. This means an extra factor of $m$ in addition to $s^\omega \cdot m^2$ operations which again results in a total running time of more than $n^{\omega}$. 

There has been another class of super fast solvers based on hierarchical Cholesky decomposition and Schur complements that classically were analyzed in the exact computation setting (for example, for matrices on finite fields) \cite{kailath1979displacement,kailath1995displacement}.
Very recently, \cite{peng2021solving} analyzed such algorithms for real matrices in the fixed-point arithmetic and showed that such super-fast solvers only need to increase the bit complexity by polylogarithmic factors in $n$. This resulted in an algorithm with a total running time of $\Otil(s^\omega m^2)$ for finding a representation of the inverse of $\mk^\top \ma \mk$. This algorithm was one of the main building blocks that allowed \cite{peng2021solving} to go below matrix multiplication time. The representation of the inverse of $\mk^\top \ma \mk$ obtained from this approach is the product of two matrices $\mx$ and $\my^\top$, i.e., 
\begin{align}
\label{eq:pv-representation}
    (\mk^\top \ma \mk)^{-1} \approx \mx \my^\top.
\end{align} 
$\mx$ and $\my$ are block matrices with a small displacement rank of $2s$. Therefore they can be applied to another matrix of size $n\times s$ with $\Otil(m)$ block operations by utilizing FFT.

The caveat of this approach is that the bit complexity of matrices $\mx$ and $\my$ is $\Omega(m)$. Therefore although they can solve one linear system faster than matrix multiplication time, for any selection of parameters $s$ and $m$, there is a $0<c<1$ such that solving $n^{c}$ linear system with a common matrix $\ma$ takes more than the matrix multiplication time. This is strange since inverting the matrix $n$ using fast matrix multiplication takes $\Otil(n^\omega)$ time and then the inverse can be applied to $n$ vectors in $\Otil(n^\omega)$ time \cite{demmel2007fast} and this does not need any sparsity properties. We now bound the running time of solving a batch of linear systems of size $r$ with \cite{peng2021solving} solver. To do so, we need the following lemma (proved in Section \ref{sec:app-proofs}) for the running time of applying the matrix $\mk$ to a matrix of size $n\times r$.

\begin{restatable}{lemma}{applyK}
\label{lemma:applyK}
Let $\ma\in \R^{n\times n}$ and $\mb \in \R^{n\times r}$. Let $\mg \in \R^{n\times s}$ be a matrix with $\Otil(n)$ nonzero entries, where $1\leq s\leq n$ is a divisor of $n$. Let $m=n/s$ and
\[
\mk = \begin{bmatrix}
    \mg & \ma \mg & \ma^2 \mg & \cdots & \ma^{m-1} \mg
\end{bmatrix} \in \R^{n\times n}.
\]
Let bit complexity of $\ma$ and $\mg$ be $\ell$ and the bit complexity of $\mb$ be $m\cdot \ell$. Then $\mk \mb$ and $\mk^\top \mb$ can be computed in $\Otil(\nnz{\ma} \cdot r \cdot m^2 \cdot \ell)$ time.
\end{restatable}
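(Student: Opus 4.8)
The plan is to compute $\mk\mb$ and $\mk^\top\mb$ \emph{without ever materializing} $\mk$, by a nested (Horner-type) evaluation that repeatedly applies the sparse matrix $\ma$ to a dense $n\times r$ block, while certifying that the bit complexity of every intermediate quantity stays $\Otil(m\ell)$. Given that bound, each application of $\ma$ uses $O(\nnz{\ma}\cdot r)$ arithmetic operations on $\Otil(m\ell)$-bit fixed-point numbers, i.e.\ $\Otil(\nnz{\ma}\cdot r\cdot m\ell)$ time (with fast integer multiplication, arithmetic on $b$-bit numbers takes $\Otil(b)$ time), and only $\Otil(m)$ such applications are needed.

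For $\mk\mb$, partition $\mb$ into $m$ row-blocks $\mb_1,\dots,\mb_m\in\R^{s\times r}$, so that $\mk\mb=\sum_{i=1}^{m}\ma^{i-1}\mg\mb_i$. I would evaluate this sum by Horner's rule: initialize $\mathbf{w}\leftarrow 0$ and, for $j=m,m-1,\dots,1$, set $\mathbf{w}\leftarrow \ma\mathbf{w}+\mg\mb_j$; one checks by induction on $j$ that afterwards $\mathbf{w}=\sum_{i=j}^{m}\ma^{\,i-j}\mg\mb_i$, so at termination $\mathbf{w}=\mk\mb$. Each of the $m$ steps performs one sparse product $\ma\mathbf{w}$ with $\mathbf{w}\in\R^{n\times r}$ ($O(\nnz{\ma}\cdot r)$ operations), one product $\mg\mb_j$ with $\mg$ having $\Otil(n)$ nonzeros ($\Otil(nr)$ operations), and $O(nr)$ additions.

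The crux is the bit-complexity bound. A single sparse multiplication by $\ma$ turns a block of bit complexity $b$ into one of bit complexity at most $b+\ell+\log n$ (each output entry is a sum of at most $n$ products of an $\ell$-bit and a $b$-bit number); since the Horner step is $\mathbf{w}\mapsto\ma\mathbf{w}+\mg\mb_j$ and $\mg\mb_j$ has bit complexity at most $\ell+m\ell+\log s=\Otil(m\ell)$, a short induction shows the bit complexity after step $j$ is $O\big((m-j)(\ell+\log n)+m\ell\big)=\Otil(m\ell)$ throughout. Consequently each of the $m$ steps costs $\Otil\big((\nnz{\ma}+n)\cdot r\cdot m\ell\big)$, for a total of $\Otil\big((\nnz{\ma}+n)\cdot r\cdot m^2\ell\big)$; assuming $\nnz{\ma}\ge n$ (otherwise $\ma$ has an all-zero row), this is $\Otil(\nnz{\ma}\cdot r\cdot m^2\ell)$. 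The bound for $\mk^\top\mb$ is obtained in the mirror-image way: its $i$-th row-block equals $\mg^\top(\ma^\top)^{\,i-1}\mb$, so I would form $\mb^{(0)}=\mb$ and $\mb^{(i)}=\ma^\top\mb^{(i-1)}$ for $i=1,\dots,m-1$ (each a sparse product, bit complexity kept at $\Otil(m\ell)$ by the same argument, cost $\Otil(\nnz{\ma}\cdot r\cdot m\ell)$ each), and then output the blocks $\mg^\top\mb^{(i-1)}$ at cost $\Otil(nrm\ell)$ each, giving the same total $\Otil(\nnz{\ma}\cdot r\cdot m^2\ell)$.

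The only delicate point, rather than a hard idea, is this bit-complexity accounting: I must make sure that iterating $\mathbf{w}\mapsto\ma\mathbf{w}+\mg\mb_j$ --- whose additive term $\mg\mb_j$ already carries $\Theta(m\ell)$ bits --- never drives any entry past $\Otil(m\ell)$ bits, since a genuinely larger bound (say super-linear in $m$) would break the target running time. What keeps it linear in $m\ell$ is that every polynomial in $\ma$ appearing in the computation has degree at most $m-1$ and each of $\mg$ and $\mb$ contributes its bit complexity to each term only once; formalizing this is the routine induction on the step index mentioned above.
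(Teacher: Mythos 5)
Your proposal is correct and follows essentially the same route as the paper: the Horner-style recursion $\mathbf{w}\mapsto\ma\mathbf{w}+\mg\mb_j$ is precisely the paper's recurrence $\mm_{j-1}=\mg\mb_{j-1}+\ma\mm_j$, and the treatment of $\mk^\top\mb$ by iterating $\ma^\top$ and peeling off $\mg^\top(\ma^\top)^{i-1}\mb$ is also the same. Your bit-complexity accounting is, if anything, slightly more explicit (tracking the $(m-j)(\ell+\log n)+m\ell$ growth term) than the paper's loosely stated recurrence $\ell_{j-1}=\Otil(\max\{m\ell,\ell+\ell_j\})$, but both lead to the same $\Otil(m\ell)$ bound and the same total running time.
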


Assuming the bit complexity of the input matrix is constant and its condition number is $\poly{n}$, for a fixed $m$ and $r$, the total running time of applying the inverse operator of \cite{peng2021solving} to an $n\times r$ matrix is the following.

\begin{align}
\label{eq:pv-running-time-1}
\Otil(\nnz{\ma}\cdot n \cdot m + n^{\omega}m^{2-\omega} + m^2 \cdot \matmult(\frac{n}{m},\frac{n}{m},r) + \nnz{\ma}\cdot r \cdot m^2).
\end{align}
The first term of \eqref{eq:pv-running-time-1} is for forming $\mk$ and $\mk^\top \ma \mk$. The second term is finding the representation of the inverse. The third term is the running time of applying the inverse of $\mk^\top \ma \mk$ to an $n\times r$ matrix. The last term is for applying $\mk^\top$ or $\mk$ to an $n\times r$ matrix (see Lemma \ref{lemma:applyK}). Note that for solving one linear system, \eqref{eq:pv-running-time-1} boils down to 
\[
\Otil(\nnz{\ma}\cdot n \cdot m + n^{\omega}m^{2-\omega}).
\]
Then one can see that by taking $m = n \cdot (\nnz{\ma})^{-1/(\omega-1)}$, a running time of 
\[
\Otil(n^2 (\nnz{\ma})^{(\omega-2)/(\omega-1)})
\]
is achieved, which is faster than matrix multiplication for all values of $\omega>2$ and $\nnz{\ma} < n^{\omega-1}$.

The running time of \eqref{eq:pv-running-time-1} is obtained by applying $\mk$, $(\mk^\top \ma \mk)^{-1}$ and $\mk^\top$ separately. Another approach is to take $\mx$ and $\my$ from \eqref{eq:pv-representation} and compute $\mxtil = \mk \mx$ and $\mytil = \mk \my$ using Lemma \ref{lemma:applyK}. Then the inverse of $\ma$ is given by $\mxtil \mytil^\top$, where the bit complexity of $\mxtil$ and $\mytil$ is $\Otil(m)$. Then solving a batch of linear systems of size $r$ by multiplying $\mxtil$ and $\mytil$ takes the following running time.
\begin{align}
\label{eq:pv-running-time-2}
\Otil(\nnz{\ma}\cdot n \cdot m^2 + n^{\omega}m^{2-\omega} + m \cdot \matmult(n,n,r)).
\end{align}
The first term of \eqref{eq:pv-running-time-2} is from computing $\mxtil,\mytil$, which also dominates the running time of forming $\mk$ and $\mk^\top \ma \mk$. The second term is for finding the representation of the inverse of $\mk^\top \ma \mk$, and the last term comes from the running time of multiplying $\mxtil$ and $\mytil$ with an $n\times r$ matrix. Given a fixed $r$, one can optimize over the best value of $m$ for each of \eqref{eq:pv-running-time-1} and \eqref{eq:pv-running-time-2} and report the smaller running time. This is what we used for Table \ref{tab:run-times}.

\paragraph{Symmetric factorization of inverse operator for faster batch solves.}
The main caveat of the approach of \cite{peng2021solving} is that the representation of the inverse has a bit complexity of $\Omega(m)$, whether we use $\mxtil \mytil^{-1}$ representation or $\mk \mx \my^\top \mk^\top$. This is the main reason that when applied to large batches, the running time of \cite{peng2021solving} becomes slower than direct methods. Here we present an approach based on our conjectures to obtain a representation of the inverse with small bit complexity.

Our approach is to write the inverse of $\mk^\top \ma \mk$ as a symmetric factorization $\mx \mx^*$. In this case, the inverse of $\ma$ is represented as $(\mk \mx)(\mk \mx)^*$. Therefore we have
\begin{align}
\label{eq:sym-argument}
\norm{\mk \mx}_{\fro} = \sqrt{\text{trace}((\mk \mx)(\mk \mx)^*)} = \sqrt{\text{trace}(\ma^{-1})} = \sqrt{\sum_{i=1}^n \lambda_i},
\end{align}
where $\lambda_i$'s are the eigenvalues of $\ma^{-1}$. Therefore in the case where $\lambda_i$'s are $\text{poly}(n)$ (which is the assumption in \cite{peng2021solving,nie2022matrix}), the absolute value of entries in the matrix $\mk \mx$ is bounded by $\text{poly}(n)$. Moreover, one can compute $\mk \mx$ using Lemma \ref{lemma:applyK}.
Therefore, in this case, we can represent the inverse of $\ma$ as $\mxtil \mxtil^*$, where $\mxtil = \mk \mx$, and the bit complexity of entries of $\mxtil$ is $\Otil(1)$. Then the running time of solving a batch of linear systems of size $r$ becomes
\begin{align}
\label{eq:our-running-time}
\Otil(\nnz{\ma}\cdot n \cdot m^2 + n^{\omega}m^{1-\omega/2} + \matmult(n,n,r))
\end{align}
since $\mxtil$ can be applied to an $n\times r$ matrix in time $\matmult(n,n,r)$. Note that we require the error bound of less than $1/2^{\ell}$ (which here be less than $1/2^{m}$) in Conjectures \ref{conj:sym-hankel} and \ref{conj:sym-inv-hankel} because the bit complexity of $\mk$ is $\Otil(m)$ and this way we can guarantee that $(\mk \mx)(\mk \mx)^*$ is close to $\ma^{-1}$. Note that Conjecture \ref{conj:sym-inv-hankel} gives an algorithm that runs with $\Otil(m^{\omega/2})$ block operations and uses numbers with the bit complexity of the input problem. Therefore Conjecture \ref{conj:sym-inv-hankel}, if true, computes a representation of the matrix $\mx$ in time $\Otil(s^{\omega} \cdot m^{1+\omega/2}) = \Otil(n^{\omega}m^{1-\omega/2})$ such that 
\[
\norm{\mx \mx^* - \mk^\top \ma \mk}_{\fro} \leq \frac{1}{2^m}.
\]
Since the bit complexity of this representation is $\Otil(m)$, we can write down $\mx$ in time $\Otil(n^2 \cdot m)$ and then use Lemma \ref{lemma:applyK} to compute $\mxtil = \mk \mx$ in time $\Otil(\nnz{\ma} \cdot n \cdot m^2)$.
This gives us the running time stated in 
Equation \eqref{eq:our-running-time}, which is also the formula we used for our running time in Table \ref{tab:run-times}. We next discuss how our approach results in a faster-than-matrix-multiplication time for solving linear programs with sparse and poly-conditioned matrices.

\subsection{Solving Linear Programs Faster than Matrix Multiplication}
\label{subsec:fast-sparse-lp}

Here we first give a simple explanation of the linear systems that are solved in each iteration of interior point methods (IPMs) for solving LPs. IPMs are the state-of-the-art approach for solving LPs. The seminal works of Karmarkar \cite{Karmarkar84} and Vaidya \cite{vaidya1989speeding} started the study of IPMs, and recently, IPM-based approaches have resulted in algorithms that solve linear programs approximately in $\Otil(n^{\omega})$ arithmetic operations \cite{cohen2021solving,van2020deterministic}. We consider the linear programs of the form
\[
\min_{\ma^\top \vx = \vb, \vx\geq 0} \vc^\top \vx ~~~~~ \text{(primal)} ~~~~~   \text{and}  ~~~~~ \max_{\ma \vy\leq \vc} ~~ \vb^\top \vy ~~~~~ \text{(dual)},
\]
where $\ma\in\R^{n\times d}$, $\vb\in\R^{d}$, $\vc \in\R^n$, and $n\geq d$. Starting from a feasible solution, each iteration $k$ of IPM corresponds to computing a vector of the following form
\begin{align}
\label{eq:original-ipm-step}
    \sqrt{\mw^{(k)}} \ma (\ma^\top \mw^{(k)} \ma)^{-1} \ma^\top \sqrt{\mw^{(k)}} \vg^{(k)},
\end{align}
where $\mw^{(k)}\in\R^{n\times n}$ is a diagonal matrix and $\vg\in\R^{n}$. Note that this is equivalent to solving a linear system with the matrix $\ma^\top \mw^{(k)} \ma$. Recent advances in IPMs \cite{cohen2021solving,van2020deterministic,lee2021tutorial} have shown that instead of \eqref{eq:original-ipm-step}, we can use the following vector
\begin{align}
\label{eq:modified-ipm-step}
    \sqrt{\mwtil^{(k)}} \ma (\ma^\top \mwtil^{(k)} \ma)^{-1} \ma^\top \sqrt{\mwtil^{(k)}} \vg^{(k)},
\end{align}
where $\mwtil^{(k)}\in\R^{n\times n}$ is another diagonal matrix such that $\norm{\vw^{(k)} - \vwtil^{(k)}}_{\infty} < C$ for some constant $C$, and $\vw^{(k)}$ and $\vwtil^{(k)}$ are the vectors corresponding to diagonal matrices $\mw^{(k)}$ and $\mwtil^{(k)}$, respectively. Another insight from IPMs is that $\mw^{(k-1)}$ and $\mw^{(k)}$ are very close to each other in the sense that $\norm{\vw^{(k-1)} - \vw^{(k)}}_2 < \beta$ for some constant $\beta$. Then the following lemma allows us to bound the number of low-rank changes we need to apply to $\mwtil^{(k)}$ to maintain $\norm{\vw^{(k)} - \vwtil^{(k)}}_{\infty} < C$ over the course of the algorithm. Therefore we can use the Sherman-Morrison-Woodbury identity (Fact \ref{fact:Woodbury}) to maintain the inverse $(\ma^\top \mwtil^{(k)} \ma)^{-1}$, and this results in an algorithm for solving LPs with $\Otil(n^{\omega})$ arithmetic operations.
 
\begin{lemma}[\cite{lee2021tutorial}]
\label{lemma:robust-ipm-num-change}
Let $\beta>0$ be a constant.
Let $\vecv^{(0)},\vecv^{(1)},\vecv^{(2)},\ldots$ be vectors in $\R^{n}$ arriving in a stream with the guarantee that $\norm{\vecv^{(k+1)}-\vecv^{(k)}}_2 \leq \beta$ for all $k$. Then for $0<C<0.5$, we can pick $\vecvbar^{(0)},\vecvbar^{(1)},\vecvbar^{(2)},\ldots$, so that (see Algorithm 4 on \cite{lee2021tutorial})
\begin{itemize}
    \item $\norm{\vecvbar^{(k)}-\vecv^{(k)}}_{\infty} \leq C$ for all $k$.
    \item $\norm{\vecvbar^{(k)}-\vecvbar^{(k-1)}}_0 \leq O(2^{2q_k}(\beta/C)^2 \log^2(n))$ where $q_k$ is the largest integer with $k = 0 \mod 2^{q_k}$.
\end{itemize}
\end{lemma}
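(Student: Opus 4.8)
This is the vector-maintenance lemma of \cite{lee2021tutorial}, and the argument is the analysis of their Algorithm~4; I sketch it here. The plan is a multi-resolution lazy-update scheme. Fix $L=\lceil\log_2 n\rceil$ levels, and for each level $q\in\{0,\dots,L\}$ keep a reference vector $\vecv^{[q]}$ that is refreshed (to the current $\vecv^{(k)}$) only at steps $k$ with $2^q\mid k$; thus $\vecv^{[q]}$ is a ``sampled'' copy of the stream at resolution $2^q$, and at step $k$ exactly the levels $q\le q_k$ are refreshed. The output $\vecvbar^{(k)}_i$ for each coordinate is produced by a lazy rule with hysteresis: a coordinate that has stayed near a coarse sample keeps reading that stale coarse value and is not rewritten, whereas a coordinate whose value has drifted is rewritten to a finer (more recent) sample, and demanding a drift of order $\theta_q$ before demoting a coordinate to a finer level $q$ prevents it from oscillating across levels, which is what makes the amortized rewrite count small.

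The quantitative engine is a window bound driven by the single hypothesis $\norm{\vecv^{(j+1)}-\vecv^{(j)}}_2\le\beta$. For a level $q$ and a step $k$ with $2^q\mid k$, Cauchy--Schwarz along each coordinate's trajectory gives, for any choice of times $s_i\in(k-2^q,k]$, that $\sum_i(\vecv^{(k)}_i-\vecv^{(s_i)}_i)^2\le 2^q\sum_{j=k-2^q}^{k-1}\norm{\vecv^{(j+1)}-\vecv^{(j)}}_2^2\le 2^{2q}\beta^2$, hence at most $(2^q\beta/\theta_q)^2$ coordinates can have moved by more than $\theta_q$ across such a (coordinate-dependent) window of length at most $2^q$. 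Since when level $q$ is refreshed at step $k$ the coordinates it rewrites are exactly those whose held value has drifted by more than $\theta_q$ from a reference at most $2^q$ steps stale, at most $O((2^q\beta/\theta_q)^2)$ coordinates change at level $q$. The thresholds must satisfy $\sum_{q=0}^L\theta_q\le C$, because $\vecvbar^{(k)}_i$ is anchored through a chain of at most $L+1$ successive references and may accumulate an error of $\theta_q$ at each level of the chain before a rewrite is forced; taking $\theta_q:=C/(2(L+1))=\Theta(C/\log n)$ makes each level cost $O(2^{2q}(\beta/C)^2\log^2 n)$ rewrites, and summing the geometric series over the refreshed levels $q=0,\dots,q_k$ (using $\sum_{q\le q_k}2^{2q}=O(2^{2q_k})$) gives $\norm{\vecvbar^{(k)}-\vecvbar^{(k-1)}}_0=O(2^{2q_k}(\beta/C)^2\log^2 n)$, the second claim.

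The $\ell_\infty$ guarantee falls out of the same chain: $\vecvbar^{(k)}_i$ equals the level-$q^\ast$ reference for the coarsest level $q^\ast$ that still passes its accuracy test for coordinate $i$, and telescoping the passed tests across the at most $L+1$ coarser levels bounds $|\vecv^{(k)}_i-\vecvbar^{(k)}_i|$ by $\sum_{q=0}^L\theta_q\le C$, which is the first claim. (That level $0$ is refreshed at every step is what makes $q^\ast$ always well defined, since $\vecv^{[0]}=\vecv^{(k)}$ after step $k$.)

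The main obstacle is not either individual bound but their simultaneous maintenance by a single online rule: one must design the lazy rewrite/hysteresis rule so that the $\ell_\infty$ invariant holds after \emph{every} step while the rewrites triggered by level re-assignments (not just by raw movement of $\vecv$) are themselves absorbed into the $O(2^{2q_k}(\beta/C)^2\log^2 n)$ budget. This is exactly the binary-counter amortization carried out in the analysis of Algorithm~4 of \cite{lee2021tutorial}, and I would follow that potential-function argument rather than reprove it from scratch.
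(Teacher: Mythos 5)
This lemma is cited from \cite{lee2021tutorial} and the paper does not prove it, so there is no in-paper proof to compare against; the appropriate check is whether your sketch is a faithful and sound reconstruction of the cited argument, and it is. The Cauchy--Schwarz window bound is correct: for each coordinate, $\bigl(\vecv^{(k)}_i-\vecv^{(s_i)}_i\bigr)^2\le (k-s_i)\sum_{j=s_i}^{k-1}\bigl(\vecv^{(j+1)}_i-\vecv^{(j)}_i\bigr)^2$, and summing over $i$ with $s_i\in(k-2^q,k]$ gives $\sum_i\bigl(\vecv^{(k)}_i-\vecv^{(s_i)}_i\bigr)^2\le 2^q\sum_{j=k-2^q}^{k-1}\norm{\vecv^{(j+1)}-\vecv^{(j)}}_2^2\le 2^{2q}\beta^2$, whence at most $2^{2q}\beta^2/\theta_q^2$ coordinates drift past $\theta_q$ over such a window. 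With $\theta_q=\Theta(C/\log n)$ this yields $O(2^{2q}(\beta/C)^2\log^2 n)$ rewrites per refreshed level, the geometric sum over $q\le q_k$ gives the stated $\ell_0$ bound, and the telescoping chain of thresholds with $\sum_q\theta_q\le C$ gives the $\ell_\infty$ guarantee. You correctly flag that the remaining work is designing the hysteresis/promotion rule so that both invariants hold simultaneously and that rewrites induced by level reassignment are charged to the same budget; that amortization is exactly what the potential-function analysis of Algorithm~4 in \cite{lee2021tutorial} supplies, and it is reasonable to defer to it rather than reprove it here.
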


In the original papers of Cohen-Lee-Song \cite{cohen2021solving} and Brand \cite{van2020deterministic}, instead of the matrix $(\ma^\top \mwtil^{(k)} \ma)^{-1}$, the matrix $\ma(\ma^\top \mwtil^{(k)} \ma)^{-1} \ma^\top$ is maintained. The reason is that for a dense matrix (e.g., $\nnz{\ma}=\Omega(n^2)$), the cost of multiplying $\ma$ by a vector in each iteration is $\Omega(n^2)$. Therefore since the number of iterations of IPM is $\sqrt{n}$, this alone gives a running time of $\Omega(n^{2.5})$, which is much higher than $n^{\omega}$. However in our case, since $\ma$ is sparse with $o(n^{\omega-1})$ nonzero entries, the cost of this multiplication over the course of the algorithm is at most $O(n^{\omega-0.5})$. Therefore we focus on maintaining $(\ma^\top \mwtil^{(k)} \ma)^{-1}$. 

To maintain $(\ma^\top \mwtil^{(k)} \ma)^{-1}$, we either have to use Fact \ref{fact:Woodbury} or compute $(\ma^\top \mwtil^{(k)} \ma)^{-1}$ from scratch. Consider a fix $m$ for the sparse solver of \cite{peng2021solving} and the number of updates of rank $\frac{n}{m}$ in the IPM, i.e., the number of indices $k$ such that $\frac{n}{m}$ entries are different between $\mwtil^{k}$ and $\mwtil^{k+1}$. By Lemma \ref{lemma:robust-ipm-num-change}, the number of such changes is $\Otil(\sqrt{m})$. If we recompute the inverse from scratch when we encounter these updates, then by \eqref{eq:pv-running-time-1}, our cost is at lease
\[
\Omega(\nnz{\ma}\cdot n \cdot m + n^{\omega}m^{2.5-\omega}),
\]
which is larger than $n^{\omega}$ because $2.5>\omega$. If we use Sherman-Morrison Woodbury identity (Fact \ref{fact:Woodbury}), since it is equivalent to applying the inverse to an $n\times \frac{n}{m}$ matrix, the cost is at least
\[
\Omega(m^{0.5} \cdot s^{\omega} \cdot m^2) = \Omega(s^{\omega} \cdot m^{2.5}),
\]
because applying the inverse of $\mk^\top \ma \mk$ to an $n\times \frac{n}{m}$ matrix costs at least $\Omega(s^{\omega} \cdot m^2)$. This is again more than $n^{\omega}$ because $ms=n$. Using the $\mxtil \mytil $ representation also leads to a cost of 
\[
\Omega(m^{1.5}\cdot \matmult(n,n,\frac{n}{m})),
\]
which is again more than $n^{\omega}$. However, if based on our conjectures, we had a representation of the form $\mxtil \mxtil^*$, then by \eqref{eq:our-running-time}, the cost of this would be
\[
O(m^{0.5}\cdot \matmult(n,n,\frac{n}{m})),
\]
which is smaller than matrix multiplication time. Now suppose our conjectures are true and we can find a representation of the inverse as $\mxtil \mxtil^*$. To go below matrix multiplication time for this inverse maintenance problem, one can adapt the following approach: If the rank of the update is larger than $\frac{n}{m^{(\omega-2)/2}}$, recompute the inverse and $\mxtil \mxtil^*$ from scratch. If the rank of the update is smaller than $n^{\alpha}$, use the Sherman-Morrison-Woodbury identity in an online way, i.e., compute the product of each term with the given vector separately (where $\alpha>0.31$ is the dual of matrix multiplication exponent and is the largest number such that an $n\times n$ matrix can be multiplied with an $n\times n^{\alpha}$ matrix in $O(n^{2+o(1)})$ time). Finally if the rank of the update was between $n^{\alpha}$ and $\frac{n}{m^{(\omega-2)/2}}$, compute the update term of Sherman-Morrison-Woodbury identity (i.e., the second term) and store it as an explicit matrix $\mq$.

With the above approach, the inverse operator is then given as $\mxtil \mxtil^* + \mq + \mm$, where $\mm$ is an implicit matrix given by Sherman-Morrison-Woodbury identity, i.e., 
\[
\mm = - (\mxtil \mxtil^* + \mq) (\ma_S)^\top (\md^{-1} + \ma_S (\mxtil \mxtil^* + \mq) (\ma_S)^\top)^{-1} \ma_S (\mxtil \mxtil^* + \mq),
\]
where $S$ is the set of indices corresponding to updates to $\mwtil$ that are not incorporated to $\mxtil \mxtil^*$ or $\mq$, and $\md$ is the diagonal matrix corresponding to these updates.
Note that the cost of applying $\mq$ to any matrix is the same as the cost of applying $\mxtil$.

Then one can see that by Lemma \ref{lemma:robust-ipm-num-change} and Equation \eqref{eq:our-running-time}, the cost of inverse maintenance is bounded by 
\begin{align}
\label{eq:our-inv-maintenance-run-time}
\Otil(\nnz{\ma}\cdot n \cdot m^{1.5+\omega/4} + n^{\omega}m^{0.5-\omega/4} + m^{(\omega-2)/4}\matmult(n,n,\frac{n}{m^{(\omega-2)/2}})),
\end{align}
where the first two terms come from the cost of recomputations of the inverse, and the other term comes from the updates performed using Sherman-Morrison-Woodbury identity. Now note that the exponent of $m$ in the second term of \eqref{eq:our-inv-maintenance-run-time} is negative for any $\omega>2$. For any value of $\nnz{\ma}\ll n^{\omega-1}$, we can take $m$ small enough to make the first term less than $n^{\omega}$. Similarly, the third term is smaller than $n^{\omega}$. This can be checked by the online tool of \cite{Complexity}.

In addition to inverse maintenance, one needs to consider the cost of queries for computing \eqref{eq:modified-ipm-step}. For these, since we only need to have $\mwtil^{(k)}$ that is close to $\mw^{(k)}$, one does not need to compute all the entries of the formula. We only need to compute the entries that cause an entry of $\mwtil^{(k)}$ to change in the next iteration. This can be done by using heavy-hitters data structures in a way similar to their use in the recent works for solving tall dense linear programs, see \cite{BrandLSS20,BLLSS0W21}. We omit the details of this here, but one can verify that with this approach, the total cost of queries can also be made less than matrix multiplication time. Therefore the overall approach gives an algorithm for solving linear programs with a sparse (i.e., $\nnz{\ma}=o(n^{\omega-1})$) and poly-conditioned matrix faster than matrix multiplication time.

\subsection{SoS decomposition of polynomials}
\label{subsec:sos}
If the coefficients of a degree $n$ polynomial $p$ is represented by a vector 
\[\va = \begin{bmatrix}
a_{n} & a_{n-1} & \cdots & a_1 & a_0
\end{bmatrix}^\top \in \F^{n+1},
\] then with $\vx = \begin{bmatrix}
x^{n} & x^{n-1} & \cdots & x & 1
\end{bmatrix}^\top$, $\vc^\top \vx = p$.
Another way of representing a polynomial $p(x)=a_0 + a_1 x + a_2 x^2 + \cdots + a_{2k} x^{2k}$ of \emph{even} degree using a Hankel matrix is to define $\mh\in\F^{(k+1)\times (k+1)}$ as $\mh_{ij} = \frac{a_{i+j-2}}{i+j-1}$ if $i+j \leq k+1$, and $\mh_{ij} = \frac{a_{i+j-2}}{2k-i-j+1}$, otherwise. For example, for a degree $4$ polynomial, we have
\[
\mh = \begin{bmatrix}
a_0 & \frac{a_1}{2} & \frac{a_2}{3} \\ 
\frac{a_1}{2} & \frac{a_2}{3} & \frac{a_3}{2} \\ 
\frac{a_2}{3} & \frac{a_3}{2} & a_4
\end{bmatrix}.
\]
Then one can see that with $\vx = \begin{bmatrix}
1 & x & \cdots & x^{k-1} & x^k
\end{bmatrix}^\top$, we have $p=\vx^\top \mh \vx$. Now suppose there exists polynomials $\ell_1,\ldots,\ell_m$ (of degree at most $k$) such that $p = \sum_{j=1}^m \ell_j^2$. Then showing the coefficient of $\ell_j$ with $b_{0}^{(j)},\ldots,b_{k}^{(j)}$, for $j\in[m]$, and defining the matrix $\mb\in\F^{(k+1)\times m}$ as $\mb_{r,j}=b_{r-1}^{(j)}$,
we have $p = \vx^\top \mb \mb^\top \vx$.
Now note that a symmetric factorization of $\mh$ like $\mh=\mb \mb^\top$ gives us such coefficients for the polynomials. Moreover for $j\in[m]$, we have
\begin{align*}
\ell_j^2 (x) & =\vx^\top\mb_{:,j}^\top \mb_{:,j} \vx=\vx^\top \begin{bmatrix}
b_{0}^{(j)} & b_{1}^{(j)} & \cdots & b_{k}^{(j)}
\end{bmatrix}^\top \begin{bmatrix}
b_{0}^{(j)} & b_{1}^{(j)} & \cdots & b_{k}^{(j)}
\end{bmatrix} \vx
\\ & = (b_{0}^{(j)} + b_{1}^{(j)} x + \cdots + b_{k}^{(j)} x^k)^2
\end{align*}
Therefore symmetric factorization of Hankel matrices give a sum-of-squares (SoS) decomposition of single-variable polynomials.

\section{Symmetric Factorization of Hermitian Toeplitz Matrices}
\label{sec:sym-toeplitz}

We start this section by showing how one can find a symmetric factorization of a certain rank-two Hermitian matrix. We will then use this to find a symmetric factorization for a Hermitian Toeplitz matrix.

\begin{lemma}
\label{lemma:rank2-symmetric}
Let $\mm$ be a rank two Hermitian matrix of the following form.
\begin{align}
\label{eq:rank-2-mat}
\mm = \begin{bmatrix}
0 & \cdots & 0 & t_1^* & 0 & \cdots & 0 \\
0 & \cdots & 0 & t_2^* & 0 & \cdots & 0 \\
\vdots & \ddots & \vdots & \vdots & \vdots & \ddots & \vdots \\
0 & \cdots & 0 & t_{j-1}^* & 0 & \cdots & 0 \\
t_1 & \cdots & t_{j-1} & 0 & t_{j+1} & \cdots & t_{n} \\
0 & \cdots & 0 & t_{j+1}^* & 0 & \cdots & 0 \\
\vdots & \ddots & \vdots & \vdots & \vdots & \ddots & \vdots \\
0 & \cdots & 0 & t_{n}^* & 0 & \cdots & 0
\end{bmatrix}.
\end{align}
In other words, only the $j$'th row and column of this matrix is nonzero and its entry $(j,j)$ is also zero. Then $\mm$ has exactly two nonzero eigenvalues $\lambda_1$ and $\lambda_2$ that are real and $\lambda_1=-\lambda_2$. Moreover let $\vvtil_1$ and $\vvtil_2$ be the eigenvectors corresponding to $\lambda_1$ and $\lambda_2$, respectively. Also let $\lambda_1$ be the positive eigenvalue and $\vv_1=\sqrt{\lambda_1} \vvtil_1, \vv_2=\sqrt{\lambda_1} \vvtil_2$. Then $\mm = \vv_1 \vv_1^* - \vv_2 \vv_2^*$.
\end{lemma}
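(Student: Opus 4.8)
The plan is to recognize $\mm$ as a sum of two rank-one terms built from a single coordinate vector and a single ``Toeplitz-data'' vector, and then simply read off its spectral decomposition. Let $\ee_j$ denote the $j$-th standard basis vector and let $\ww \in \C^n$ be the vector with $\ww_k = t_k^*$ for $k \neq j$ and $\ww_j = 0$. First I would check directly against \eqref{eq:rank-2-mat} that
\[
\mm = \ee_j \ww^* + \ww \ee_j^* ,
\]
since $\ee_j \ww^*$ has its $j$-th row equal to $\ww^*$ (and is zero elsewhere), $\ww \ee_j^*$ has its $j$-th column equal to $\ww$ (and is zero elsewhere), and both contribute $0$ to the $(j,j)$ entry because $\ww_j = 0$. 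As $\mm \neq 0$ we have $\ww \neq 0$, and $\ee_j,\ww$ are linearly independent because $\ww_j = 0$; hence $\mm$ has rank exactly two and column space $\Span{\ee_j, \ww}$.

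Next I would use that $\mm$ is Hermitian, so it is unitarily diagonalizable with real eigenvalues and an orthonormal eigenbasis, and the number of nonzero eigenvalues (with multiplicity) equals $\rank(\mm) = 2$; call them $\lambda_1,\lambda_2$. Every diagonal entry of $\mm$ is zero, so $\trace{\mm} = 0$, whence $\lambda_1 + \lambda_2 = 0$, i.e. $\lambda_2 = -\lambda_1$; taking $\lambda_1 > 0$ gives the asserted sign pattern. (Alternatively one can get the eigenvalues explicitly: on the invariant subspace $\Span{\ee_j,\ww}$, using $\ww^*\ee_j = \ee_j^*\ww = \ww_j = 0$, $\ee_j^*\ee_j = 1$, and $\ww^*\ww = \norm{\ww}_2^2$, the map $\mm$ sends $\alpha\ee_j + \beta\ww \mapsto \norm{\ww}_2^2\,\beta\,\ee_j + \alpha\,\ww$, so in the basis $(\ee_j,\ww)$ it is $\left[\begin{smallmatrix} 0 & \norm{\ww}_2^2 \\ 1 & 0\end{smallmatrix}\right]$, with characteristic polynomial $\lambda^2 - \norm{\ww}_2^2$ and eigenvalues $\pm\norm{\ww}_2$.)

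Finally I would invoke the spectral theorem: writing $\vvtil_1,\vvtil_2$ for orthonormal eigenvectors for $\lambda_1,\lambda_2$, the zero-eigenvalue terms vanish, so $\mm = \lambda_1 \vvtil_1\vvtil_1^* + \lambda_2 \vvtil_2\vvtil_2^*$. Substituting $\lambda_2 = -\lambda_1$ and using that $\lambda_1 > 0$ is a positive real, so $\sqrt{\lambda_1}$ is real and $(\sqrt{\lambda_1}\,\vvtil_i)^* = \sqrt{\lambda_1}\,\vvtil_i^*$, yields
\[
\mm = \bigl(\sqrt{\lambda_1}\,\vvtil_1\bigr)\bigl(\sqrt{\lambda_1}\,\vvtil_1\bigr)^* - \bigl(\sqrt{\lambda_1}\,\vvtil_2\bigr)\bigl(\sqrt{\lambda_1}\,\vvtil_2\bigr)^* = \vv_1\vv_1^* - \vv_2\vv_2^* .
\]
I do not expect a genuine obstacle: the argument is short, and the only points needing minor care are confirming that the rank is exactly two (so there are exactly two nonzero eigenvalues, which is where Hermiticity is used) and tracking conjugates when pulling the real scalar $\sqrt{\lambda_1}$ out of the rank-one terms. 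Note also that $\vvtil_i\vvtil_i^*$ is insensitive to the unit-modulus phase ambiguity in the choice of unit eigenvector, so the statement is well posed.
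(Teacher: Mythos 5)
Your proof is correct, and it takes a genuinely different route from the paper's. The paper works directly with the eigenvector equations: it writes out $\mm \vv = \lambda \vv$ coordinate by coordinate, solves $v_k = t_k^* v_j/\lambda$ for $k\neq j$, substitutes back to get $\lambda^2 = \sum_{k\neq j}\abs{t_k}^2$, and then assembles the decomposition from the two resulting eigenpairs. You instead exhibit the structural identity $\mm = \ee_j \ww^* + \ww \ee_j^*$ (with $\ww_j=0$), observe that Hermiticity plus rank two forces exactly two nonzero real eigenvalues, and obtain $\lambda_2=-\lambda_1$ from $\trace{\mm}=0$ — or, equivalently, from the $2\times 2$ matrix $\bigl[\begin{smallmatrix}0 & \smallnorm{\ww}^2\\ 1 & 0\end{smallmatrix}\bigr]$ representing $\mm$ on the invariant subspace $\Span{\ee_j,\ww}$. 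Both arrive at the same explicit eigenvalues $\pm\smallnorm{\ww}_2$. Your approach buys a cleaner conceptual picture (the rank-one outer-product structure, which makes the rank-two claim and the invariant subspace transparent, and the trace trick for the sign symmetry without any computation), while the paper's coordinate computation is self-contained and also hands you the eigenvectors in closed form, which is what the subsequent algorithm actually constructs. Your note at the end that $\vvtil_i\vvtil_i^*$ is phase-invariant is a nice touch the paper leaves implicit.
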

\begin{proof}
We calculate the eigenvalue decomposition of $\mm$. Since $\mm$ is Hermitian, its eigenvectors can be picked to be orthonormal, and since $\mm$ is a rank two matrix, it has at most two nonzero eigenvalues that can be computed by the formula $\mm \vv=\lambda \vv$. This gives the following set of linear systems
\begin{align}
\label{eq:eig-system}
\nonumber
\sum_{k\in[n],k\neq j} t_k v_k & = \lambda v_j, \\
t_{k}^* v_j & = \lambda v_{k}, \forall k\in[n], k\neq j.
\end{align}
Therefore for nonzero $\lambda$, we have $v_k=\frac{t_k^* v_j}{\lambda}$. Substituting this into the first equation, we have 
\[
\frac{v_j}{\lambda}\sum_{k\in[n],k\neq j}^n (t_k t_k^*) = \lambda v_j.
\]
Note that $v_1$ is nonzero because otherwise all of $v_k$'s are zero by \eqref{eq:eig-system} (and this is in contrast with the assumption that the norm of the eigenvectors is equal to one). Therefore 
\[
\lambda^2 = \sum_{k\in[n],k\neq j}^n (t_k t_k^*) = \sum_{k\in[n],k\neq j}^n \norm{t_k}^2.
\] 
Hence the right hand side is positive and $\mm$ has two \emph{real} eigenvalues $\lambda_1=\sqrt{\sum_{k=1}^n (t_k t_k^*)}$ and $\lambda_2=-\sqrt{\sum_{k=1}^n (t_k t_k^*)}$, where we define $t_j=0$. Let $\vvtil_1$ and $\vvtil_2$ be the eigenvectors corresponding to $\lambda_1$ and $\lambda_2$, respectively. Let $\vv_1=\sqrt{\lambda_1} \vvtil_1$ and $\vv_2=\sqrt{\lambda_1} \vvtil_2$. Note that since $\lambda_1$ is positive $\sqrt{\lambda_1}$ is real and therefore $\vv_1^* = \sqrt{\lambda_1} \vvtil_1^*$ and $\vv_2^* = \sqrt{\lambda_1} \vvtil_2^*$. Then we have
\[
\mm = \lambda_1 \vvtil_1 \vvtil_1^* + \lambda_2 \vvtil_2 \vvtil_2^* = \lambda_1 \vvtil_1 \vvtil_1^* - \lambda_1 \vvtil_2 \vvtil_2^* = \vv_1 \vv_1^* - \vv_2 \vv_2^*.
\]
\end{proof}

By Lemma \ref{lemma:rank2-symmetric}, to find a symmetric factorization of the matrix $\mm$ in \eqref{eq:rank-2-mat}, we only need to find its eigenvalues and eigenvector. Since $\mm$ is a rank two matrix with $O(n)$ nonzero entries, this can be done in $O(n)$ time. To prove Theorem \ref{thm:sym-toeplitz}, we essentially find a symmetric factorization of such a matrix and show that a symmetric factorization of a Hermitian Toeplitz matrix can be constructed by shifting and adding this symmetric factorization for a rank two matrix.

\symmetricToeplitz*
\begin{proof}
Let $\mttil$ be a Toeplitz matrix that is equal to $\mt$ everywhere except on the diagonal and the diagonal of $\mttil$ is equal to zero. We now show that $\mttil$ can be written as $\mv_1 \mv_1^* - \mv_2 \mv_2^*$ for $\mv_1,\mv_2\in\F^{n\times n}$.

Let $\mm=\mttil - \mDelta \mttil \mDelta^\top$. Since $\mttil$ is a Toeplitz matrix, $\mm$ is a matrix of rank two of the following form
\[
\mm = 
\begin{bmatrix}
0 & t_2 & t_3 & \cdots & t_{n} \\
t_2^* & 0 & 0 & \cdots & 0 \\
t_3^* & 0 & 0 & \cdots & 0 \\
\vdots & \vdots & \vdots & \ddots & \vdots \\
t_{n}^* & 0 & 0 & \cdots & 0
\end{bmatrix}.
\]
Therefore by Lemma \ref{lemma:rank2-symmetric}, there exists $\vv_1$ and $\vv_2$ such that $\mm = \vv_1 \vv_1^* - \vv_2 \vv_2^*$.
Now note that
\[
\mttil = \sum_{j=1}^{n} \mDelta^{j-1} \mm (\mDelta^{j-1})^\top.
\]
Therefore defining
\[
\mv_1 = \begin{bmatrix}
\vv_1 & \mDelta \vv_1 & \mDelta^2 \vv_1 & \cdots & \mDelta^{n-1} \vv_1
\end{bmatrix},
\]
\[ 
\mv_2 = \begin{bmatrix}
\vv_2 & \mDelta \vv_2 & \mDelta^2 \vv_2 & \cdots & \mDelta^{n-1} \vv_2
\end{bmatrix},
\]
we have $\mttil = \mv_1 \mv_1^* - \mv_2 \mv_2^*$. Let $t_1$ be the diagonal element of $\mt$ that is a real number because $\mt$ is a Hermitian matrix. Then if $t_1\geq 0$, setting
\[
\mb = \begin{bmatrix}
\mv_1 & \sqrt{\abs{t_1}} \mi
\end{bmatrix}, \text{ and } \mc = \mv_2,
\]
and setting 
\[
\mb = \mv_1, \text{ and } \mc = \begin{bmatrix}
\mv_2 & \sqrt{\abs{t_1}} \mi
\end{bmatrix},
\]
otherwise, we have $\mt = \mb \mb^* - \mc \mc^*$. Finally note that in the first case $\mc$ is a Toeplitz matrix, and therefore has a displacement rank of two and $\mb$ has a displacement rank of two with respect to $(\mDelta, \begin{bmatrix}
\mDelta & \mDelta
\end{bmatrix}^\top)$. Similarly, in the second case also the displacement rank of both matrices is two. This implies that a vector can be multiplied by $\mb,\mc,\mb^*,\mc^*$ in $\Otil(n)$ time by FFT techniques.
\end{proof}

\section{Key Identity for Hankel Matrices}
\label{sec:key-identity}

In this section, we consider a symmetric factorization of a Hankel matrix $\mh$ and without loss of generality, we assume $\mh\in\F^{2^k\times 2^k}$, for $k\in\N$. Note that if the dimensions of $\mh$ is not a power of two, we can extend it to a Hankel matrix $\mhtil$ in which the dimensions are a power of two as the following. Let $\vh=(h_1,\ldots,h_{2s-1})$, for $s\in\N$, be the generating vector of the Hankel matrix $\mh$. In this case $\mh\in\F^{s\times s}$, and we are assuming  $s$ is not a power of two. Let $k$ be the smallest integer such that $2^k>s$ and let $\vhtil=(h_1,\ldots,h_{2s-1},0,\ldots,0)\in\F^{2^{k+1}-1}$. Now let $\mhtil$ be a Hankel matrix with generating vector $\vhtil$. Then $\mhtil \in \F^{2^k \times 2^k}$. Moreover $\mhtil_{1:s,1:s} = \mh$. Therefore if $\mbtil,\mctil$ are matrices such that $\mhtil = \mbtil \mbtil^* - \mctil \mctil^*$, then defining $\mb = \mbtil_{1:s,:}$ and $\mc = \mctil_{1:s,:}$, we have 
$\mh = \mb \mb^* - \mc \mc^*$. Therefore we only need to find a symmetric factorization of $\mhtil$.
We now define a matrix that converts a Hankel matrix to Toeplitz and vice versa.

\begin{definition}
\label{def:j}
Let $\mj_{n} \in \F^{n\times n}$ be a matrix with $\mj_{n}(i,j)=1$ if $i+j=n+1$, and $\mj_{n}(i,j)=0$, otherwise. For example
\[
\mj_{4} = \begin{bmatrix}
0 & 0 & 0 & 1 \\
0 & 0 & 1 & 0 \\
0 & 1 & 0 & 0 \\
1 & 0 & 0 & 0
\end{bmatrix}.
\]
We call this matrix the exchange matrix (also called backward identity). When the dimension is clear from the context, we show the exchange matrix with just $\mj$. Note that $\mj \mj = \mi$, and $\mj^\top = \mj$. Moreover we say, a matrix $\mm$ is centrosymmetric if $\mj \mm = \mm \mj$, is persymmetric if $\mm \mj = \mj \mm^\top$, and 
is bisymmetric if it is both symmetric and centrosymmetric.
\end{definition}

The next lemma describes our similarity transformation to decompose a Hankel matrix to the sum of a Hermitian Toeplitz matrix and a centrosymmetric Hankel matrix.

\begin{lemma}[Key Identity]
\label{lemma:key-identity}
Let $\ms = \frac{1}{2}(1+i)\mi+ \frac{1}{2}(1-i) \mj$, where $\mi$ is the identity matrix and $\mj$ is the exchange matrix. Let $\mh$ be a Hankel matrix.
The imaginary part of $\ms \mh \ms^*$ is a skew-symmetric Toeplitz matrix with zero diagonal and its real part is a centrosymmetric Hankel matrix.
\end{lemma}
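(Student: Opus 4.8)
The plan is to compute $\ms\mh\ms^*$ directly by expanding $\ms = \tfrac{1}{2}(1+i)\mi + \tfrac{1}{2}(1-i)\mj$ and using the structural identities for the exchange matrix $\mj$ and the Hankel matrix $\mh$. The key algebraic facts I would line up first are: (i) $\mj = \mj^\top$ and $\mj\mj = \mi$; (ii) since $\mh$ is Hankel it is symmetric, $\mh = \mh^\top$, and moreover $\mj\mh$ and $\mh\mj$ are Toeplitz matrices (reversing rows or columns of a Hankel matrix gives a Toeplitz matrix, as noted in Section~\ref{sec:rel-work-and-motiv}); (iii) $\ms^* = \tfrac12(1-i)\mi + \tfrac12(1+i)\mj$ since $\mj$ is real symmetric. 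With these in hand, expanding the product $\ms\mh\ms^*$ yields four terms, each a scalar multiple of one of $\mh$, $\mj\mh$, $\mh\mj$, $\mj\mh\mj$. I would collect the scalar coefficients: the coefficients of $\mh$ and of $\mj\mh\mj$ will turn out to be purely imaginary (they are $\pm$ the same quantity, reflecting that $\mj\mh\mj$ is also Hankel), while the coefficients of $\mj\mh$ and $\mh\mj$ will be real. This already suggests the split: the real part of $\ms\mh\ms^*$ lives on the "$\mj\mh$, $\mh\mj$" part (Toeplitz-type, hence after symmetrization Hankel-type), and the imaginary part lives on the "$\mh$, $\mj\mh\mj$" part.

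Next I would identify the two pieces precisely. Writing $\ms\mh\ms^* = \tfrac12\big(i\,\mj\mh - i\,\mh\mj\big) + \tfrac12\big(\mh\mj\cdots\big)$ — here I would carry out the bookkeeping carefully — the real part equals $\tfrac12(\mj\mh + \mh\mj)$ (a symmetric combination of two Toeplitz matrices, hence Toeplitz; but wait, I need it to be \emph{Hankel}, so I must track which combination actually appears). The claim in the lemma is that $\real(\ms\mh\ms^*)$ is a centrosymmetric Hankel matrix, so the real part must be something like $\tfrac12(\mh + \mj\mh\mj)$, which is manifestly Hankel (a sum of two Hankel matrices) and centrosymmetric (invariant under conjugation by $\mj$, since $\mj(\mh+\mj\mh\mj)\mj = \mj\mh\mj + \mh$). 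Symmetrically, $\imag(\ms\mh\ms^*)$ should be $\tfrac12(\mj\mh - \mh\mj)$ up to sign; this is a difference of two Toeplitz matrices, hence Toeplitz, and it is skew-symmetric because $(\mj\mh - \mh\mj)^\top = \mh^\top\mj - \mj\mh^\top = \mh\mj - \mj\mh = -(\mj\mh - \mh\mj)$ using $\mh = \mh^\top$ and $\mj = \mj^\top$. The zero-diagonal claim then follows immediately from skew-symmetry over the reals.

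The three verifications to complete are therefore: (a) the coefficient arithmetic, i.e., that expanding $\tfrac14(1+i)(1-i)$-type products correctly sorts the four terms into the claimed real and imaginary parts — this is routine but must be done with care about which of $\mj\mh$ versus $\mh\mj$ carries the $+i$ versus $-i$; (b) that $\mj\mh$ and $\mh\mj$ are genuinely Toeplitz and that $\mh + \mj\mh\mj$ is genuinely Hankel — these follow from the entrywise definitions $\mh_{ab} = h_{a+b-1}$ and $\mj_{ab} = [a+b = n+1]$, e.g.\ $(\mj\mh)_{ab} = \mh_{n+1-a,b} = h_{n-a+b}$, which depends only on $b - a$, hence Toeplitz; (c) the symmetry bookkeeping for skew-symmetric and centrosymmetric, done above. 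I expect step (a) — getting the signs and the real/imaginary assignment exactly right so that the \emph{real} part is Hankel and the \emph{imaginary} part is Toeplitz (and not the reverse) — to be the only place where a sign slip could derail the argument; everything else is a short structural check. A clean way to organize (a) is to note $\ms = \tfrac{1+i}{2}(\mi + (-i)\mj)$ is, up to the scalar $\tfrac{1+i}{\sqrt2}$ of modulus $1$, a unitary-like object, so $\ms\mh\ms^*$ is a similarity that splits $\mh$ along the $\pm 1$ eigenspaces of $\mj$; I would use whichever of the two framings (brute expansion or eigenspace decomposition of $\mj$) makes the signs most transparent.
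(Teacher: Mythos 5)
Your overall plan --- expand $\ms\mh\ms^*$ into the four terms $\mh,\ \mj\mh,\ \mh\mj,\ \mj\mh\mj$, sort by whether each picks up a real or imaginary scalar, and then verify the structural properties --- is precisely the paper's proof, and the decomposition you ultimately land on, $\real(\ms\mh\ms^*) = \tfrac12(\mh + \mj\mh\mj)$ and $\imag(\ms\mh\ms^*) = \pm\tfrac12(\mh\mj - \mj\mh)$, together with your checks of centrosymmetry, skew-symmetry, Toeplitz/Hankel structure, and the zero diagonal, all match the paper.

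The one genuine problem is that your intermediate coefficient assignment is stated exactly backwards, and you recover only by invoking the lemma you are trying to prove. Expanding
$\ms\mh\ms^* = \tfrac14\big[(1+i)\mi + (1-i)\mj\big]\,\mh\,\big[(1-i)\mi + (1+i)\mj\big]$,
the $\mi$--$\mi$ and $\mj$--$\mj$ terms give $\mh$ and $\mj\mh\mj$ the \emph{real} coefficient $(1+i)(1-i)/4 = 1/2$, while the mixed terms give $\mh\mj$ and $\mj\mh$ the \emph{imaginary} coefficients $(1+i)^2/4 = i/2$ and $(1-i)^2/4 = -i/2$. You assert the reverse (that $\mh$ and $\mj\mh\mj$ get imaginary coefficients and $\mj\mh$, $\mh\mj$ get real ones), deduce the inconsistent conclusion that the real part is the Toeplitz combination $\tfrac12(\mj\mh+\mh\mj)$, notice it contradicts the lemma's claim, and then patch it by writing ``the claim in the lemma is \ldots, so the real part must be $\tfrac12(\mh+\mj\mh\mj)$.'' In a blind proof you cannot use the target statement as the corrective; you need the one line of arithmetic above, after which the real/imaginary split falls out directly and your structural verifications go through unchanged. (Your cosmetic overall sign on the imaginary part, $\tfrac12(\mj\mh-\mh\mj)$ versus the paper's $\tfrac12(\mh\mj-\mj\mh)$, is harmless since you only use it via skew-symmetry, but the coefficient computation fixes that too.)
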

\begin{proof}
We have
\begin{align*}
\ms \mh \ms^* & = (\frac{1}{2}(1+i)\mi+ \frac{1}{2}(1-i) \mj) \mh (\frac{1}{2}(1+i)\mi+ \frac{1}{2}(1-i) \mj)^*
\\ & =
(\frac{1}{2}(1+i)\mi+ \frac{1}{2}(1-i) \mj) \mh (\frac{1}{2}(1-i)\mi+ \frac{1}{2}(1+i) \mj)
\\ & = 
\frac{1}{4}((1+i)(1-i)\mh + (1+i)(1-i)\mj \mh \mj + (1+i)^2\mh \mj + (1-i)^2 \mj \mh)
\\ & =
\frac{1}{2} (\mh + \mj \mh \mj) + \frac{i}{2} (\mh \mj - \mj \mh).
\end{align*}
Therefore the real part of $\ms \mh \ms^*$ is $\frac{1}{2} (\mh + \mj \mh \mj)$. Now we have
\[
(\mh + \mj \mh \mj) \mj = \mh \mj + \mj \mh = \mj( \mj \mh \mj + \mh).
\]
Therefore $\real(\ms \mh \ms^*)$ is centrosymmetric. Also note that both $\mh$ and $\mj \mh \mj$ are Hankel and the sum of Hankel matrices is a Hankel matrix. Therefore $\real(\ms \mh \ms^*)$ is also Hankel. In addition, note that since $\mh$ is Hankel, both $\mh \mj$ and $\mj \mh$ are Toeplitz matrices and the sum (and also the difference) of Toeplitz matrices, is a Toeplitz matrix. Therefore $\mh \mj - \mj \mh$ and $\imag(\ms \mh \ms^*)$ are Toeplitz matrices. Finally we have
\[
(\mh \mj - \mj \mh)^\top = \mj^\top \mh^\top - \mh^\top \mj^\top = \mj \mh - \mh \mj = -(\mh \mj - \mj \mh).
\]
Therefore $\imag(\ms \mh \ms^*)$ is a skew-symmetric matrix, and hence its diagonal is equal to zero. Note that this implies $i\cdot \imag(\ms \mh \ms^*)$ is a Hermitian matrix.
\end{proof}

\noindent
We now give a $4$-by-$4$ example to understand Lemma \ref{lemma:key-identity} better. Let
\[
\mh = 
\begin{bmatrix}
h_1 & h_2 & h_3 & h_4 \\
h_2 & h_3 & h_4 & h_5 \\
h_3 & h_4 & h_5 & h_6 \\
h_4 & h_5 & h_6 & h_7 \\
\end{bmatrix}.
\]
We then have
\[
\real(\ms \mh \ms^*) =\frac{1}{2}\begin{bmatrix}
h_1+h_7 & h_2+h_6 & h_3+h_5 & 2h_4 \\
h_2+h_6 & h_3+h_5 & 2 h_4 & h_3+ h_5 \\
h_3 + h_5 & 2h_4 & h_3+h_5 & h_2+h_6 \\
2h_4 & h_3+h_5 & h_2+h_6 & h_1+h_7 \\
\end{bmatrix},
\]
\[
\imag(\ms \mh \ms^*) = \frac{1}{2} \begin{bmatrix}
0 & h_3 - h_5 & h_2 - h_6 & h_1 - h_7 \\
h_5 - h_3 & 0 & h_3 - h_5 & h_2 - h_6 \\
h_6 - h_2 & h_5 - h_3 & 0 & h_3 - h_5 \\
h_7 - h_1 & h_6 - h_2 & h_5 - h_3 & 0
\end{bmatrix}.
\]

\noindent
Now note that matrix $\ms$ is a unitary matrix and therefore $\ms \ms^* = \ms^* \ms = \mi$. Therefore, we have
\begin{align}
\label{eq:h-iden}
\mh = \ms^* \ms \mh \ms^* \ms = \ms^* \left( \real(\ms \mh \ms^*) + i\cdot \imag(\ms \mh \ms^*) \right) \ms
\end{align}
\noindent
Therefore if we have matrices $\mb_1,\mb_2,\mc_1,\mc_2$ such that $\real(\ms \mh \ms^*) = \mb_1 \mb_1^* - \mc_1 \mc_1^*$ and $i\cdot\imag(\ms \mh \ms^*) = \mb_2 \mb_2^* - \mc_2 \mc_2^*$, then we have
\[
\mh = \begin{bmatrix}
\ms^* \mb_1 & \ms^* \mb_2
\end{bmatrix}
\begin{bmatrix}
\ms^* \mb_1 & \ms^* \mb_2
\end{bmatrix}^* - \begin{bmatrix}
\ms^* \mc_1 & \ms^* \mc_2
\end{bmatrix}
\begin{bmatrix}
\ms^* \mc_1 & \ms^* \mc_2
\end{bmatrix}^*.
\]
In the next section, we discuss how Lemma \ref{lemma:key-identity} can be exploited to devise our recursive algorithm.

\RestyleAlgo{algoruled}
\IncMargin{0.15cm}
\begin{algorithm}[t]
\textbf{Input:} Hankel matrix $\mh\in \F^{2^k\times 2^k}$ \\
Set $\mh_0 = \mh$ \\
\For {$t=1,\ldots,k$}{
Set $\mn_t = i \cdot \imag(\mstil_{t} \mh_{t-1} \mstil_{t}^*)$ \\
Set $\mh_t = \real(\mstil_{t} \mh_{t-1} \mstil_{t}^*)$ \\
}
Set $\mn_{k+1} = \mh_k$ \\
\For {$t=1,\ldots,k+1$}{
Set $\mx_t$ and $\my_t$ to be matrices such that $\mn_t = \mx_t \mx_t^* - \my_t \my_t^*$ \\
Set $\mb_t = \mstil_1^* \mstil_2^* \cdots \mstil_{t-1}^* \mstil_t^* \mx_t$ and $\mc_t = \mstil_1^* \mstil_2^* \cdots \mstil_{t-1}^* \mstil_t^* \my_t$
}
\Return $\begin{bmatrix}
\mb_{k+1} & \mb_{k} & \cdots & \mb_2 & \mb_1
\end{bmatrix}$ and $\begin{bmatrix}
\mc_{k+1} & \mc_{k} & \cdots & \mc_2 & \mc_1
\end{bmatrix}$
\caption{Symmetric Factorization of Hankel Matrices}
\label{alg:hankel-sym-main-algo}
\end{algorithm}

\section{Symmetric Factorization of Hankel Matrices}
\label{sec:recursive-algo}

Since $i\cdot \imag(\ms \mh \ms^*)$ in \eqref{eq:h-iden} is a Hermitian Toeplitz matrix, we can use Theorem \ref{thm:sym-toeplitz} to find a symmetric factorization for it. 
To deal with the real part of $\ms \mh \ms^*$, we use Lemma \ref{lemma:key-identity} in a recursive fashion using the following matrix. 

\begin{definition}
For $n=2^k$, and $t=1,\ldots,k$, we define $\ms_t := \frac{1}{2}(1+i)\mi_{n/2^{t-1}}+ \frac{1}{2}(1-i) \mj_{n/2^{t-1}} \in \F^{(2^{k-t+1})\times (2^{k-t+1})}$, and
\[
\mstil_t := \begin{bmatrix}
\ms_t & 0 & \cdots & 0 \\
0 & \ms_t & \cdots & 0 \\
\vdots & \vdots & \ddots & \vdots \\
0 & 0 & \cdots & \ms_t
\end{bmatrix} \in \F^{2^k\times 2^k}.
\]
\end{definition}

Algorithm \ref{alg:hankel-sym-main-algo} is our main procedure to find a symmetric factorization of a Hankel matrix. However, so far, we only know how to find a symmetric factorization of $\mn_1$ using Theorem \ref{thm:sym-toeplitz}. Therefore we discuss how to find a symmetric factorization for the rest of $\mn_t$'s. We start by characterizing the structure of matrices $\mh_t$ and $\mn_t$.

\begin{lemma}
\label{lemma:h-struct}
For $t=1,\ldots,k$, matrix $\mh_t$ consists of $2^{t}\times 2^t$ blocks of Hankel matrices as the following. The first block-row consists of $2^t$ Hankel matrices
\[
\begin{bmatrix}
\mq_1 & \mq_2 & \cdots & \mq_{2^{t}-1} & \mq_{2^t}
\end{bmatrix}.
\]
The second block-row is
\[
\begin{bmatrix}
\mq_2 & \mj \mq_1 \mj & \mq_4 & \mj \mq_3 \mj & \cdots & \mq_{2^t} & \mj \mq_{2^t - 1} \mj 
\end{bmatrix}
\]
For $s=1,\ldots,t-1$, let $\mp_{s}$ be the matrix consisting of block-rows $1$ to $2^s$, and $\mptil_{s}$ be the matrix consisting of block-rows $2^{s}+1$ to $2^{s+1}$. Let
\[
\mp_s =
\begin{bmatrix}
\mp_{s,1} & \mp_{s,2} & \cdots & \mp_{s,2^{t-s}}
\end{bmatrix}.
\]
Then
\[
\mptil_{s} = \begin{bmatrix}
\mp_{s,2} & \mp_{s,1} & \mp_{s,4} & \mp_{s,3} & \cdots & \mp_{s,2^{t-s}} & \mp_{s,2^{t-s}-1}
\end{bmatrix}.
\]
Moreover the structure of the block-columns of $\mh_t$ is similar to the structure of the block-rows we described above.
\end{lemma}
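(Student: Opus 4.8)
\noindent
The plan is to proceed by induction on $t$, tracking how the map $\mh_{t-1}\mapsto \mh_t = \real(\mstil_t \mh_{t-1}\mstil_t^*)$ transforms the block structure. The base case $t=1$ is essentially the Key Identity (Lemma \ref{lemma:key-identity}): there $\mstil_1 = \ms_1$ is the single $2^k\times 2^k$ block, so $\mh_1 = \real(\ms_1\mh_0\ms_1^*) = \tfrac12(\mh + \mj\mh\mj)$. Partitioning this into $2\times 2$ blocks of size $2^{k-1}\times 2^{k-1}$, one checks directly from the explicit form $\tfrac12(\mh+\mj\mh\mj)$ that the first block-row is $[\mq_1\ \mq_2]$ with each $\mq_i$ Hankel (sum of Hankels is Hankel), and the second block-row is $[\mq_2\ \mj\mq_1\mj]$; this is exactly the $4\times 4$ example displayed after Lemma \ref{lemma:key-identity}, and the general $n$ computation is the same. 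The fact that $\mj\mq_1\mj$ appears (rather than $\mq_1$ again) is where centrosymmetry of the whole $\mh_1$ enters, and it is the seed of the ``swap adjacent pairs'' pattern that propagates through the recursion.

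\noindent
For the inductive step, assume $\mh_{t-1}$ has the claimed $2^{t-1}\times 2^{t-1}$ block structure with Hankel leaves. The matrix $\mstil_t$ is block-diagonal with $2^{t-1}$ copies of $\ms_t$, where $\ms_t$ has size $2^{k-t+1}$, i.e.\ exactly twice the leaf size of $\mh_{t-1}$. So conjugation by $\mstil_t$ acts \emph{independently} on each $2\times 2$ sub-grid of leaves of $\mh_{t-1}$ of the form $\left[\begin{smallmatrix}\mathbf P & \mathbf Q\\ \mathbf R & \mathbf T\end{smallmatrix}\right]$ sitting at positions $(2a-1,2b-1),(2a-1,2b),(2a,2b-1),(2a,2b)$, replacing it by $\real\!\left(\ms_t\left[\begin{smallmatrix}\mathbf P & \mathbf Q\\ \mathbf R & \mathbf T\end{smallmatrix}\right]\ms_t^*\right)$. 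By the inductive structure of $\mh_{t-1}$, each such $2\times 2$ block of leaves is itself (up to the recorded $\mj$-conjugations) a Hankel matrix of the next size up; more precisely, the block-row pattern in the statement says exactly that the $(2a-1)$-st and $(2a)$-th block-rows are obtained from one another by swapping adjacent columns, which is precisely the condition that gluing the four leaves together yields a genuine Hankel matrix on which Lemma \ref{lemma:key-identity} applies. Applying the Key Identity to each glued $2^{k-t+1}\times 2^{k-t+1}$ Hankel block then produces a centrosymmetric Hankel matrix, and re-splitting it into four leaves of size $2^{k-t}$ reproduces, one level finer, the ``first block-row $[\mq_1\ \mq_2]$, second block-row $[\mq_2\ \mj\mq_1\mj]$'' pattern inside each sub-grid; chaining these across all sub-grids and bookkeeping the indices gives the stated formulas for $\mp_s$ and $\mptil_s$ with $s$ ranging up to $t-1$. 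The statement about block-columns follows by the same argument applied to $\mh_{t-1}^\top = \mh_{t-1}$ (Hankel matrices are symmetric), or equivalently by noting $\real(\mstil_t\mh_{t-1}\mstil_t^*)$ is symmetric whenever $\mh_{t-1}$ is.

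\noindent
The main obstacle I anticipate is purely notational rather than conceptual: carefully setting up the index bookkeeping so that the recursive ``split each Hankel block into a $2\times 2$ grid, swap adjacent columns in the lower half, $\mj$-conjugate the off-diagonal leaf'' operation composes correctly over $t$ steps, and verifying that the leaves stay Hankel (and that the centrosymmetry is the exact invariant making the glued blocks Hankel so that Lemma \ref{lemma:key-identity} is legitimately applicable at each level). A secondary point to be careful about is that $\ms_t$ acts on leaf-\emph{pairs} of $\mh_{t-1}$, not on single leaves, so one must confirm the block-diagonal sizes line up ($\ms_t$ of size $2^{k-t+1}$ vs.\ leaves of $\mh_{t-1}$ of size $2^{k-t+1}$ — wait, the leaves of $\mh_{t-1}$ have size $2^{k}/2^{t-1}=2^{k-t+1}$, matching $\ms_t$ exactly), so in fact each $\ms_t$ block covers exactly one leaf of $\mh_{t-1}$ and the ``gluing'' happens because four leaves of $\mh_{t-1}$ already assemble into one Hankel block by the inductive hypothesis; I would double-check this size accounting first, since it controls the entire recursion. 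Once the invariant is correctly stated, each inductive step is just an application of Lemma \ref{lemma:key-identity} blockwise plus a re-indexing.
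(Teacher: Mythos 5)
Your inductive step has a concrete size error that derails the argument. You write that $\ms_t$ has size $2^{k-t+1}$, ``exactly twice the leaf size of $\mh_{t-1}$,'' and accordingly describe $\mstil_t$ as conjugating each $2\times 2$ sub-grid of leaves of $\mh_{t-1}$ by $\ms_t$. But the leaves of $\mh_{t-1}$ have size $2^k/2^{t-1} = 2^{k-t+1}$ --- the \emph{same} size as $\ms_t$, not half. You notice this yourself in the final parenthetical, but you then keep the ``gluing'' story alive by asserting that the four leaves of a $2\times 2$ sub-grid already assemble into a Hankel matrix by the inductive hypothesis. That assertion is not justified by the lemma statement: the block-row pattern (first row $[\mq_1\ \mq_2\ \cdots]$, second row $[\mq_2\ \mj\mq_1\mj\ \cdots]$) only records which \emph{sub-blocks} appear where; it does not by itself force the anti-diagonals to line up across the block boundaries, which is what it would take for a $2\times 2$ arrangement of individually-Hankel leaves to be globally Hankel. (The gluing property is in fact true, but it follows from unwinding the recursion one more level via the Key Identity, not from the block pattern recorded in the lemma.)

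More importantly, once you accept that each copy of $\ms_t$ sits on exactly one leaf of $\mh_{t-1}$, the gluing question disappears and the inductive step becomes \emph{easier}, and this is the route the paper takes. Conjugation by $\mstil_t$ acts independently on each $2^{k-t+1}\times 2^{k-t+1}$ leaf block $\mq$ of $\mh_{t-1}$, and by the inductive hypothesis each such $\mq$ is Hankel (or of the form $\mj\mq'\mj$, which is also Hankel). Lemma~\ref{lemma:key-identity} applied to $\mq$ directly gives that $\real(\ms_t\mq\ms_t^*)$ is a centrosymmetric Hankel matrix of the same size; splitting it into a $2\times 2$ grid of $2^{k-t}\times 2^{k-t}$ leaves yields the new Hankel leaves with the first/second block-row pattern coming from centrosymmetry. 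The $\mp_s/\mptil_s$ relation for $s=1,\ldots,t-1$ then lifts directly from the inductive hypothesis for $\mh_{t-1}$, because each coarse block position of $\mh_{t-1}$ is refined in place to a $2\times 2$ sub-grid in $\mh_t$ (and one also uses that $\ms_t$ commutes with $\mj$, so $\mj$-conjugated leaves split the same way as their un-conjugated counterparts). No claim about sub-grids of $\mh_{t-1}$ being Hankel is needed. Your base case and the symmetry argument for block-columns are fine; the issue is confined to the mechanics of the inductive step.
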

\begin{proof}
Before proving the lemma, note that the description completely describes $\mh_t$ (at least up to the block structure) since it describes the first two block-rows and then it uses the first $2^s$ block-rows to describe the next $2^s$ block rows. The structure of $\mh_1$ follows from Lemma \ref{lemma:key-identity}. We then use induction to prove the structure for the rest of $\mh_t$'s.

First note that the number of block-rows and block-columns of $\mh_{t}$ is twice the number of block-rows and block-columns of $\mh_{t-1}$. In other words, each block of $\mh_{t-1}$ is split into four blocks in $\mh_{t}$. Note that in iteration $t$, we multiply $\mh_{t-1}$ by $\mstil_{t}$ and $\mstil_{t}^*$ from left and right, respectively, which is equivalent to multiplying each block of $\mh_{t-1}$ by $\ms_{t}$ and $\ms_{t}^*$ from left and right, respectively. Therefore by induction hypothesis for $\mh_{t-1}$ and Lemma \ref{lemma:key-identity}, since the blocks of $\mh_{t-1}$ are Hankel matrices, the blocks of $\mh_{t}=\real(\mstil_{t} \mh_{t-1} \mstil_{t}^*)$ are also Hankel matrices. Moreover the relation between the first block-row and the second block-row directly follows from Lemma \ref{lemma:key-identity} due to centrosymmetry of the resulting Hankel matrix. Finally the relation between $\mp_s$ and $\mptil_s$ for $s=1,\ldots,t-1$ simply follows from the induction hypothesis for the structure of $\mh_{t-1}$. A similar argument proves the structure of block-columns as well.
\end{proof}

\noindent
We now use Lemma \ref{lemma:h-struct} to characterize the structure of matrices $\mn_t$.

\begin{lemma}
\label{lemma:n-struct}
For $t=1,\ldots,k$, matrix $\mn_t$ consists of $2^{t-1}\times 2^{t-1}$ blocks of Hermitian Toeplitz matrices as the following. The first block-row consists of $2^{t-1}$ Toeplitz matrices
\[
\begin{bmatrix}
\mq_1 & \mq_2 & \cdots & \mq_{2^{t}-1} & \mq_{2^{t-1}}
\end{bmatrix}.
\]
The second block-row is
\[
\begin{bmatrix}
\mq_2 & \mj \mq_1 \mj & \mq_4 & \mj \mq_3 \mj & \cdots & \mq_{2^{t-1}} & \mj \mq_{2^{t-1} - 1} \mj 
\end{bmatrix}
\]
For $s=1,\ldots,t-2$, let $\mp_{s}$ be the matrix consisting of block-rows $1$ to $2^s$, and $\mptil_{s}$ be the matrix consisting of block-rows $2^{s}+1$ to $2^{s+1}$. Let
\[
\mp_s =
\begin{bmatrix}
\mp_{s,1} & \mp_{s,2} & \cdots & \mp_{s,2^{t-s}}
\end{bmatrix}.
\]
Then
\[
\mptil_{s} = \begin{bmatrix}
\mp_{s,2} & \mp_{s,1} & \mp_{s,4} & \mp_{s,3} & \cdots & \mp_{s,2^{t-s}} & \mp_{s,2^{t-s}-1}
\end{bmatrix}.
\]
Moreover for even $j$, $\mq_j=0$. Also the structure of the block-columns of $\mn_t$ is similar to the structure of the block-rows we described above.
\end{lemma}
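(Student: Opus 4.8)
The plan is to derive the structure of $\mn_t$ directly from its definition $\mn_t = i\cdot\imag(\mstil_t \mh_{t-1}\mstil_t^*)$ combined with the block description of $\mh_{t-1}$ from Lemma~\ref{lemma:h-struct}. Since $\mstil_t$ is block diagonal with copies of $\ms_t$ sitting exactly on the $2^{t-1}\times 2^{t-1}$ Hankel-block grid of $\mh_{t-1}$, the $(a,b)$ block of $\mstil_t\mh_{t-1}\mstil_t^*$ is $\ms_t\,(\mh_{t-1})_{a,b}\,\ms_t^*$, and hence the $(a,b)$ block of $\mn_t$ is $i\cdot\imag\!\big(\ms_t\,(\mh_{t-1})_{a,b}\,\ms_t^*\big)$. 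Because each $(\mh_{t-1})_{a,b}$ is Hankel (Lemma~\ref{lemma:h-struct}), Lemma~\ref{lemma:key-identity} shows every such block is a Hermitian Toeplitz matrix with zero diagonal; this gives the coarse $2^{t-1}\times 2^{t-1}$ block-structure claim, and the base case $t=1$ (a single $1\times 1$ block) is immediate.

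To transfer the interleaving structure, I would first record three elementary facts about $\ms_t=\frac12(1+i)\mi+\frac12(1-i)\mj$: (i) $\ms_t\mj=\ms_t^*$ and $\mj\ms_t^*=\ms_t$, both one-line computations from $\mj^2=\mi$ and the symmetry of $\mi,\mj$; (ii) consequently, for any real matrix $\mm$, $\ms_t(\mj\mm\mj)\ms_t^*=\ms_t^*\mm\ms_t=\overline{\ms_t\mm\ms_t^*}$, so the blockwise map $\Phi\colon\mm\mapsto i\cdot\imag(\ms_t\mm\ms_t^*)$ satisfies $\Phi(\mj\mm\mj)=-\Phi(\mm)$; and (iii) by Lemma~\ref{lemma:key-identity}, $\Phi(\mm)$ is $i$ times a real skew-symmetric Toeplitz matrix, which is anti-centrosymmetric, i.e.\ $\mj\,\Phi(\mm)\,\mj=-\Phi(\mm)=\Phi(\mj\mm\mj)$. (Note every $\mh_{t-1}$ with $t\ge 2$ is real because it is defined as a real part, and $\mh_0=\mh$ is real, so $\Phi$ applies throughout.) Now every structural relation of $\mh_{t-1}$ in Lemma~\ref{lemma:h-struct} --- the rule giving the second block-row from the first, and the recursive $\mp_s\mapsto\mptil_s$ relation --- is built from two operations on blocks: permuting (super-)block-columns and replacing a block $\mm$ by $\mj\mm\mj$. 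The first commutes with $\Phi$; the second is intertwined by (ii)--(iii), which say precisely that $\Phi$ sends ``$\mj\mm\mj$'' to ``$\mj\,\Phi(\mm)\,\mj$''. Hence the entire description of $\mh_{t-1}$ transfers verbatim to $\mn_t=\Phi(\mh_{t-1})$ (reading the Hankel blocks $\mq_j$ of $\mh_{t-1}$ as the Hermitian Toeplitz blocks $\Phi(\mq_j)$ of $\mn_t$), yielding the first two block-rows and the recursion exactly as stated; the block-column structure follows by a symmetric argument.

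The one genuinely new assertion, and the step I expect to be the main obstacle, is $\mq_j=0$ for even $j$. Here I would use one further level of the recursion behind Lemma~\ref{lemma:h-struct}: $\mh_{t-1}$ is obtained from $\mh_{t-2}$ by applying $\ms_{t-1}(\cdot)\ms_{t-1}^*$ to each Hankel block, taking real parts --- which produces \emph{centrosymmetric} Hankel matrices by Lemma~\ref{lemma:key-identity} --- and re-splitting each into a $2\times 2$ array of half-size blocks. A short, routine computation shows that re-blocking a centrosymmetric Hankel matrix $\mm$ yields an array of the form $\left[\begin{smallmatrix}\mm_{11}&\mm_{12}\\\mm_{12}&\mj\mm_{11}\mj\end{smallmatrix}\right]$ with $\mm_{12}$ again a centrosymmetric Hankel matrix; applied to the first block-row of $\mh_{t-2}$, this shows the even-indexed blocks of the first block-row of $\mh_{t-1}$ are centrosymmetric. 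For a centrosymmetric $\mm$ one has $\mj\mm\mj=\mm$ and $\mm\mj=\mj\mm$, so the computation in the proof of Lemma~\ref{lemma:key-identity} collapses to $\ms_t\mm\ms_t^*=\frac12(\mm+\mj\mm\mj)+\frac i2(\mm\mj-\mj\mm)=\mm$, which is real, hence $\Phi(\mm)=0$. Thus $\mq_j=\Phi\big((\mh_{t-1})_{1,j}\big)=0$ for even $j$. Making this airtight really means strengthening the induction hypothesis of Lemma~\ref{lemma:h-struct} to also record which blocks are centrosymmetric --- equivalently, proving the sharper fact that $(\mn_t)_{a,b}=0$ whenever exactly one of $a,b$ is even, which is consistent with and implies the stated first-block-row vanishing together with the recursion. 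Once this is in hand, all remaining assertions of the lemma follow.
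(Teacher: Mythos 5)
Your proposal is correct and follows essentially the same route as the paper: transfer the interleaving block structure from $\mh_{t-1}$ via the identity $\mj\ms_t=\ms_t\mj$ (equivalently your map $\Phi$), and deduce the vanishing of even-indexed blocks by noting that the corresponding Hankel blocks of $\mh_{t-1}$ are centrosymmetric, being the off-diagonal quarters of the centrosymmetric Hankel outputs of Lemma~\ref{lemma:key-identity} applied one level earlier, so that $\imag(\ms_t\,\cdot\,\ms_t^*)$ annihilates them. Your remark that a fully rigorous treatment would strengthen the induction hypothesis of Lemma~\ref{lemma:h-struct} to track which blocks are centrosymmetric is a fair observation; the paper leaves this point implicit too.
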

\begin{proof}
Note that if a real matrix $\mm$ is skew-symmetric, $i\cdot \mm$ is Hermitian. Therefore
the structure of $\mn_1$ follows from Lemma \ref{lemma:key-identity}. For $t=2,\ldots,k$,
note that $\mn_t = i \cdot \imag(\mstil_{t} \mh_{t-1} \mstil_{t}^*)$. Therefore we use the structure of $\mh_{t-1}$ described in Lemma \ref{lemma:h-struct} to prove the structure for $\mn_t$. 
Note that the number of blocks of $\mn_t$ is equal to the number of blocks of $\mh_{t-1}$. Moreover for each block $\mq$ of $\mh_{t-1}$, the corresponding block in $\mn_t$ is $i\cdot \imag(\ms_t \mq \ms_t^*)$, which is a Hermitian Toeplitz matrix by Lemma \ref{lemma:key-identity}. Moreover the structure of block-rows of $\mn_t$ (i.e., the relation between the first block-row and the second block-row and the relation between $\mp_s$ and $\mptil_s$, for $s=1,\ldots,t-2$) follows from the structure of $\mh_{t-1}$ due to Lemma \ref{lemma:h-struct} and the fact that $\mj$ and $\ms_{t}$ commute, i.e., $\mj \ms_{t} = \ms_{t} \mj$. The structure of block columns also follows similarly. 

Finally note that if $\mq$ is a centrosymmetric Hankel matrix, then $\imag(\ms_t \mq \ms_t^*)$ is zero since
\[
\imag(\ms_t \mq \ms_t^*) = \frac{1}{2}(\mq \mj - \mj \mq) = 0,
\]
where the second equality follows from the definition of centrosymmetry (see Definition \ref{def:j}). For the first equality see the proof of Lemma \ref{lemma:key-identity}.
Finally note that for a Hankel matrix $\mq$, by Lemma \ref{lemma:key-identity}, $\real(\ms_t \mq \ms_t^*)$ is a centrosymmetric Hankel matrix. Therefore the top-right and bottom-left blocks of $\real(\ms_t \mq \ms_t^*)$ are also centrosymmetric Hankel matrices. Therefore since the blocks of $\mh_{t-2}$ are Hankel matrices, the blocks $\mq_j$ with even index $j$ in $\mh_{t-1}$ are centrosymmetric Hankel and therefore the corresponding blocks of them in $\mn_t$ are zero.
\end{proof}

Before going further, we need to define the following matrices that allow us to exploit the structures described in Lemmas \ref{lemma:h-struct} and \ref{lemma:n-struct}.

\begin{definition}
\label{def:perm-matrices}
For $t\in[k]$, let
\[
\me_t = \begin{bmatrix}
0_{2^{t-1}\times 2^{t-1}} & 0_{2^{t-1}\times (2^k-2^{t-1})}\\
0_{(2^k-2^{t-1}) \times 2^{t-1}} & \mi_{2^k-2^{t-1}}
\end{bmatrix} \in \F^{2^k\times 2^k},
\]
\[
\mf_t = \begin{bmatrix}
0 & \mi_{2^{t-1}} & 0 & 0 & \cdots & 0 & 0 \\
\mi_{2^{t-1}} & 0 & 0 & 0 & \cdots & 0 & 0 \\
0 & 0 & 0 & \mi_{2^{t-1}} & \cdots & 0 & 0 \\
0 & 0 & \mi_{2^{t-1}} & 0 & \cdots & 0 & 0 \\
\vdots & \vdots & \vdots & \vdots & \ddots & \vdots & \vdots \\
0 & 0 & 0 & 0 & \cdots & 0 & \mi_{2^{t-1}} \\
0 & 0 & 0 & 0 & \cdots & \mi_{2^{t-1}} & 0
\end{bmatrix} \in \F^{2^k\times 2^k}.
\]
\end{definition}
For example, the matrix $\mf_2$ allows us to permute $\mp_1$ in $\mh_k$ to get $\mptil_1$ in Lemma \ref{lemma:h-struct}. We then can remove the first entry of $\mp_1$ using $\me_2$ to prevent it with clashing with the first column. Note that $\me_t^\top = \me_t$ and $\mf_t^\top = \mf_t$. Therefore we can use these matrices for permuting block-columns as well. For the other block-rows/columns, we can use appropriate $\mf_t$'s and $\me_t$'s.  We use these matrices in the proofs of the rest of the section. 

We are now equipped to describe how a representation of symmetric factorization of matrices $\mn_t$, for $t\in[k]$, can be found in linear time.

\begin{theorem}
\label{thm:sym-nt-less-k}
For $t=1,\ldots,k$, we can find $\mx_t$ and $\my_t$ in $\Otil(n)$ time such that  $\mn_t = \mx_t \mx_t^* - \my_t \my_t^*$.
\end{theorem}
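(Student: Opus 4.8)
The plan is to exploit the block structure of $\mn_t$ described in Lemma \ref{lemma:n-struct} together with the symmetric factorization of a single Hermitian Toeplitz matrix from Theorem \ref{thm:sym-toeplitz}, and then to "lift" these block-level factorizations to a factorization of the whole matrix by a shifting-and-adding scheme analogous to the one used in the proof of Theorem \ref{thm:sym-toeplitz} itself (where $\mttil = \sum_j \mDelta^{j-1}\mm(\mDelta^{j-1})^\top$). The key observation is that, by Lemma \ref{lemma:n-struct}, $\mn_t$ is completely determined by its first block-row $\begin{bmatrix}\mq_1 & \mq_2 & \cdots & \mq_{2^{t-1}}\end{bmatrix}$ of $2^{t-1}$ Hermitian Toeplitz blocks (with $\mq_j = 0$ for even $j$), since every subsequent pair of block-rows is obtained from the previous $2^s$ block-rows by the explicit swap pattern $\mp_{s,1}\leftrightarrow\mp_{s,2}$, $\mp_{s,3}\leftrightarrow\mp_{s,4}$, etc. So the data of $\mn_t$ has only $\Otil(n)$ size: roughly $2^{t-2}$ nonzero Toeplitz blocks, each of size $2^{k-t+1}\times 2^{k-t+1}$, hence $\Otil(n)$ numbers total.

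First I would apply Theorem \ref{thm:sym-toeplitz} to each of the (at most $2^{t-2}$ nonzero) Hermitian Toeplitz blocks $\mq_j$ to obtain $\mb^{(j)},\mc^{(j)}$ with $\mq_j = \mb^{(j)}(\mb^{(j)})^* - \mc^{(j)}(\mc^{(j)})^*$, each block factorization costing $\Otil(2^{k-t+1})$ time, for a total of $\Otil(n)$. Next I would assemble the block-diagonal matrix whose diagonal blocks are, in order, the factors for the first block-row; but since the blocks are spread across the matrix in a Hankel-like pattern rather than sitting on a diagonal, I would instead build a "generating" rank-structured matrix $\mm_t$ — the analogue of the rank-two matrix $\mm$ in the Toeplitz proof — supported on the first block-row and block-column, whose symmetric factorization is obtained by stacking the $\mb^{(j)}$'s (respectively $\mc^{(j)}$'s) appropriately, and then write $\mn_t$ as a sum of shifted copies of $\mm_t$ under the block-shift operator. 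Here the permutation/truncation matrices $\me_t,\mf_t$ from Definition \ref{def:perm-matrices} are exactly the tool to realize the swap-and-truncate pattern $\mp_s\mapsto\mptil_s$ without double-counting the shared column: $\mf_t$ produces the swapped copy and $\me_t$ removes the overlapping block. Concretely, $\mx_t$ and $\my_t$ would each be of the form $\begin{bmatrix}\ms^* \mb^{(\cdot)} & (\text{shifted/permuted copies})\end{bmatrix}$, giving $O(n\log n)$ columns, and the identity $\mn_t = \mx_t\mx_t^* - \my_t\my_t^*$ would be verified by checking it block by block against the structure in Lemma \ref{lemma:n-struct}.

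The running-time bookkeeping is straightforward once the structure is in place: there are $O(\log n)$ levels implicit in the recursion already (this is a single level $t$), the per-block factorizations total $\Otil(n)$, and assembling the shifted/permuted representation is $\Otil(n)$ since $\me_t,\mf_t$ are permutations and each is applied to $\Otil(n)$-size data. Over $\C$ (fixed point) the error per block is $< 2^{-\ell}$ from Theorem \ref{thm:sym-toeplitz}, and since we only add $O(\log n)$ shifted copies, a union bound on errors (with an extra $\log$ factor in the bit complexity absorbed into $\Otil$) keeps $\|\mn_t - (\mx_t\mx_t^* - \my_t\my_t^*)\|_\fro < 2^{-\ell}$.

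The main obstacle I anticipate is purely combinatorial bookkeeping: making the shift-and-add decomposition of $\mn_t$ in terms of a single generating block $\mm_t$ precise, so that the swap pattern $\mp_{s,2j-1}\leftrightarrow\mp_{s,2j}$ on every scale $s=1,\ldots,t-2$ is reproduced exactly by products of the $\mf_t,\me_t$ matrices and block shifts, and so that the Hankel (not Toeplitz) alternation of the top block-row is respected. This is where I would need to be careful that the "displacement" that glues the blocks together is the correct block-shift, and that the shared blocks between consecutive block-row groups are subtracted off exactly once. I expect no conceptual difficulty beyond this, since each ingredient (rank-two symmetric factorization, Toeplitz factorization via shifting, the structural lemmas, and the permutation matrices) is already available; the proof is essentially an orchestration of these pieces with the $\me_t,\mf_t$ matrices doing the indexing work.
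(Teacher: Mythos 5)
There is a genuine gap in how you plan to get from the block-level Toeplitz structure to a symmetric factorization of the whole matrix, and it is exactly at the step you flag as ``stacking the $\mb^{(j)}$'s appropriately.'' Applying Theorem~\ref{thm:sym-toeplitz} to each $\mq_j$ separately gives $\mq_j = \mb^{(j)}(\mb^{(j)})^* - \mc^{(j)}(\mc^{(j)})^*$, but these blocks sit in the off-diagonal positions $(1,j)$ and $(j,1)$ of the arrow-shaped matrix $\mm_1$, so each $\mq_j$ (for $j\ge 2$) pairs with its Hermitian conjugate across the diagonal rather than with itself. A $\mb\mb^*-\mc\mc^*$ factorization of $\mq_j$ does not combine into a $\mx\mx^*-\my\my^*$ factorization of $\begin{bmatrix}0&\mq_j\\ \mq_j&0\end{bmatrix}$, and in general there is no block-diagonal-style stacking that produces the arrow matrix. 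So the per-block use of Theorem~\ref{thm:sym-toeplitz} does not get off the ground.

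What the paper actually does is never factor the blocks individually: it treats the whole first block-row/column matrix $\mm_1$ as a single object and redoes a Toeplitz-style shift argument on it directly. Concretely, $\mm_1$ is split as $\matu+\ml$ where $\matu$ keeps the diagonal-and-above entries of each Toeplitz block (with half of $\mq_1$) and $\ml$ keeps the strictly-below-diagonal entries; then $\matu - \mDeltatil\matu\mDeltatil^\top$ is exactly a rank-two ``arrow head'' supported on one row and one column with zero $(j,j)$ entry (because the diagonal of $\mq_1$ is zero), so Lemma~\ref{lemma:rank2-symmetric} applies and $\matu$ is reconstructed as a sum of $\mDeltatil$-shifts of this rank-two factorization; $\ml$ is handled symmetrically with $\mDeltatil^\top$-shifts. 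Your intuition about the $\me_t,\mf_t$ recursion and the shift-and-add philosophy is right and matches the paper's step from $\mm_{s-1}$ to $\mm_s$, but the base case ($\mm_1$) needs the upper/lower split plus the rank-two lemma, not individual Toeplitz factorizations. Without that ingredient the factorization of $\mm_1$, and hence of $\mn_t$, is not established.
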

\begin{proof}
Let $\begin{bmatrix}
\mq_1 & \cdots & \mq_{2^{t-1}}
\end{bmatrix}$ be the blocks of the first block-row and the first block-column of $\mn_t$, respectively. For $s\in[t]$,
let $\mm_s$ be a matrix with block structure as $\mn_t$ such that all of its blocks are zero except the block-rows/columns $1,\ldots,2^{s-1}$ and its block-rows/columns $1,\ldots,2^{s-1}$ are equal to the block-rows/columns $1,\ldots,2^{s-1}$ of $\mn_t$. For example $\mm_{t} = \mn_{t}$ and 
\[
\mm_1 = \begin{bmatrix}
\mq_1 & \mq_2 & \mq_3 & \cdots & \mq_{2^{t-1}} \\
\mq_2 & 0 & 0 & \cdots & 0 \\
\mq_3 & 0 & 0 & \cdots & 0 \\
\vdots & \vdots & \vdots & \ddots & \vdots \\
\mq_{2^{t-1}} & 0 & 0 & \cdots & 0
\end{bmatrix}. 
\]

For $s\in[2^{t-1}]$, let $\mq_{s,U}$ be an upper triangular matrix that is equal to $\mq_s$ on the diagonal and above the diagonal, and let $\mq_{s,L}$ be a lower triangular matrix with zero diagonal that is equal to $\mq_s$ below the diagonal. Therefore $\mq_s = \mq_{s,U} + \mq_{s,L}$. Moreover since $\mq_s$ is Hermitian by Lemma \ref{lemma:n-struct}, $\mq_s = \mq_{s,U}^* + \mq_{s,L}^*$.

Let $\matu$ and $\ml$ be matrices with the same block structure as $\mn_t$. Moreover suppose all of blocks of $\matu$ and $\ml$ are zero except the first block-row and the first block-column. Let the first block-row and the first block-column of $\matu$ be
\[
\begin{bmatrix}
\frac{1}{2} \mq_1 & \mq_{2,U} & \mq_{3,U} \cdots \mq_{2^{t-1},U}
\end{bmatrix}, \text{ and } \begin{bmatrix}
\frac{1}{2} \mq_1 & \mq_{2,U} & \mq_{3,U} \cdots \mq_{2^{t-1},U}
\end{bmatrix}^*,
\]
respectively. Also let the first block-row and the first block-column of $\ml$ be
\[
\begin{bmatrix}
\frac{1}{2} \mq_1 & \mq_{2,L} & \mq_{3,L} \cdots \mq_{2^{t-1},L}
\end{bmatrix}, \text{ and } \begin{bmatrix}
\frac{1}{2} \mq_1 & \mq_{2,L} & \mq_{3,L} \cdots \mq_{2^{t-1},L}
\end{bmatrix}^*,
\]
respectively. Therefore $\mm_1 = \matu + \ml$. We now give symmetric factorizations for $\matu$ and $\ml$. By construction and Lemma \ref{lemma:key-identity}, the diagonal of $\mq_1$ is zero. Therefore by Lemma \ref{lemma:rank2-symmetric}, the matrix consisting of only the first row and the first column of $\matu$ can be written as $\vv_1 \vv_1^* - \vv_2 \vv_2^*$. Moreover, the matrix consisting of only the first row and the first column of $\matu$ is equal to
\[
\matu - \mDeltatil \matu \mDeltatil^\top,
\]
where 
\[
\mDeltatil = \begin{bmatrix}
\mDelta_{2^{k-t+1}} & 0 & \cdots & 0 \\
0 & \mDelta_{2^{k-t+1}} & \cdots & 0 \\
\vdots & \vdots & \ddots & \vdots \\
0 & 0 & \cdots & \mDelta_{2^{k-t+1}}
\end{bmatrix} \in \F^{2^k\times 2^k}.
\]
Therefore $\matu - \mDeltatil \matu \mDeltatil^\top = \vv_1 \vv_1^* - \vv_2 \vv_2^*$ for some vectors $\vv_1$ and $\vv_2$ that can be computed in $O(n)$ time. Now since $\mq_1$ is Toeplitz and $\mq_{s,U}$'s are upper triangular and Toeplitz, we have
\[
\matu = \sum_{j = 0}^{2^{k-t+1}-1} \mDeltatil^j (\matu - \mDeltatil \matu \mDeltatil^\top) (\mDeltatil^j)^\top.
\]
Therefore setting
\[
\mv_1 = \begin{bmatrix}
\vv_1 & \mDeltatil \vv_1 & \mDeltatil^2 \vv_1 & \cdots & \mDeltatil^{2^{k-t+1}-1} \vv_1
\end{bmatrix},
\]
\[
\mv_2 = \begin{bmatrix}
\vv_2 & \mDeltatil \vv_2 & \mDeltatil^2 \vv_2 & \cdots & \mDeltatil^{2^{k-t+1}-1} \vv_2
\end{bmatrix},
\]
we have $\matu = \mv_1 \mv_1^* - \mv_2 \mv_2^*$. Now consider the matrix that is equal to zero everywhere except on row $2^{k-t+1}$ and column $2^{k-t+1}$ and on that row and column, it is equal to $\ml$. This matrix is equal to
\[
\ml - \mDeltatil^\top \ml \mDeltatil.
\]
Because the diagonal of $\mq_1$ is equal to zero, by Lemma \ref{lemma:rank2-symmetric}, we can find $\vw_1$ and $\vw_2$ such that $\ml - \mDeltatil^\top \ml \mDeltatil = \vw_1 \vw_1^* - \vw_2 \vw_2^*$ in $O(n)$ time. Now since $\mq_1$ is Toeplitz and $\mq_{s,L}$'s are lower triangular and Toeplitz, we have
\[
\ml = \sum_{j = 0}^{2^{k-t+1}-1} (\mDeltatil^j)^\top (\ml - \mDeltatil^\top \ml \mDeltatil) \mDeltatil^j.
\]
Therefore setting
\[
\mw_1 = \begin{bmatrix}
\vw_1 & \mDeltatil^\top \vw_1 & (\mDeltatil^2)^\top \vw_1 & \cdots & (\mDeltatil^{2^{k-t+1}-1})^\top \vw_1
\end{bmatrix}, \text{ and }
\]
\[
\mw_2 = \begin{bmatrix}
\vw_2 & \mDeltatil^\top \vw_2 & (\mDeltatil^2)^\top \vw_2 & \cdots & (\mDeltatil^{2^{k-t+1}-1})^\top \vw_2
\end{bmatrix},
\]
we have $\ml = \mw_1 \mw_1^* - \mw_2 \mw_2^*$. Hence
\[
\mm_1 = \begin{bmatrix}
\mv_1 & \mw_1
\end{bmatrix}\begin{bmatrix}
\mv_1 & \mw_1
\end{bmatrix}^* - 
\begin{bmatrix}
\mv_2 & \mw_2
\end{bmatrix}\begin{bmatrix}
\mv_2 & \mw_2
\end{bmatrix}^*.
\]
We now construct other $\mm_s$'s recursively. By using matrices in Definition \ref{def:perm-matrices} and the structure of $\mn_t$ described in Lemma \ref{lemma:n-struct}, for $s=2,\ldots,t$, we have
\[
\mm_s = \mm_{s-1} + \me_{k-t+s+1}\mf_{k-t+s+1}\mm_{s-1} \mf_{k-t+s+1}^\top \me_{k-t+s+1}^\top
\]
Therefore if $\mp_s$ and $\mg_s$ are matrices such that $\mm_{s} = \mp_s \mp_s^* - \mg_s \mg_s^*$, then for $s=2,\ldots,t$
\[
\mp_s = \begin{bmatrix}
\mp_{s-1} & \me_{k-t+s+1}\mf_{k-t+s+1}\mp_{s-1}
\end{bmatrix},
\]
\[ 
\mg_s = \begin{bmatrix}
\mg_{s-1} & \me_{k-t+s+1}\mf_{k-t+s+1}\mg_{s-1}.
\end{bmatrix}.
\]
This completes the proof since $\mm_t = \mn_t$.
\end{proof}

\noindent
We now prove the main theorem. In addition to Theorem \ref{thm:sym-nt-less-k}, this only requires describing how to find a representation of symmetric factorization of $\mn_{k+1}=\mh_k$ in linear time.
\begin{proof}[Proof of Theorem~\ref{thm:sym-hankel}]
By Theorem \ref{thm:sym-nt-less-k}, we can find $\mx_t$ and $\my_t$ such that $\mn_t = \mx_t \mx_t^* - \my_t \my_t^*$, for $t\in[k]$. Therefore we only need to find $\mx_{k+1}$ and $\my_{k+1}$ such that $\mn_{k+1} = \mx_{k+1} \mx_{k+1}^* - \my_{k+1} \my_{k+1}^*$. Note that $\mn_{k+1} = \mh_{k}$.

The matrix $\mh_k$ is $2^k$-by-$2^k$ and according to Lemma \ref{lemma:h-struct}, it consists of $2^k$-by-$2^k$ blocks. Therefore each block of it is only one entry. In this case for a block $\mq$, we have $\mq=\mj \mq \mj$. Therefore all of the entries on the diagonal of $\mh_k$ are the same. Moreover, each row is just a permutation of the first row and similarly each column is a permutation of the first column. Now let $\mhtil_k$ be the matrix that is equal to $\mh_k$ everywhere except on the diagonal and $\mhtil_k$ is zero on the diagonal, i.e., $\mh_k = \mhtil_k + \mh_k(1,1) \cdot \mi$.

Now let $\mm_0$ be a matrix with all of the entries equal to zero except the first row and the first column and its first row and column is equal to the first row and column of $\mhtil_k$. By Lemma \ref{lemma:rank2-symmetric}, we can find vectors $\vv_1$ and $\vv_2$ such that $\mm_0 = \vv_1 \vv_1^* - \vv_2 \vv_2^*$ in $O(n)$ time.

Now for $t\in[k]$, let $\mm_t$ be the matrix that is zero everywhere except on rows/columns $1,\ldots,2^t$ and its rows/columns $1,\ldots,2^t$ are equal to the corresponding rows/columns of $\mhtil_{k}$. Note that $\mm_k = \mhtil_{k}$.
Now because of the structure of $\mh_k$ described by Lemma \ref{lemma:h-struct}, for $t\in[k]$, we have
\begin{align}
\label{eq:recursive-M}
\mm_t = \mm_{t-1} + \me_t\mf_t \mm_{t-1} \mf_t^\top \me_t^\top
\end{align}
Let $\mp_t$ and $\mg_t$ be such that $\mm_{t} = \mp_t \mp_t^* - \mg_t \mg_t^*$. Then by \eqref{eq:recursive-M}, we have
\begin{align*}
\mm_t & = \begin{bmatrix}
\mp_{t-1} & \me_{t-1} \mf_{t-1} \mp_{t-1}
\end{bmatrix}\begin{bmatrix}
\mp_{t-1} & \me_{t-1} \mf_{t-1} \mp_{t-1}
\end{bmatrix}^*
\\ & - 
\begin{bmatrix}
\mg_{t-1} & \me_{t-1} \mf_{t-1} \mg_{t-1}
\end{bmatrix}\begin{bmatrix}
\mg_{t-1} & \me_{t-1} \mf_{t-1} \mg_{t-1}
\end{bmatrix}^*.
\end{align*}
Therefore $\mp_t = \begin{bmatrix}
\mp_{t-1} & \me_{t-1} \mf_{t-1} \mp_{t-1}
\end{bmatrix}$ and $\mg_t=\begin{bmatrix}
\mg_{t-1} & \me_{t-1} \mf_{t-1} \mg_{t-1}
\end{bmatrix}$.
Therefore, we only need to find $\vv_1$ and $\vv_2$ with $\mm_0 = \vv_1 \vv_1^* - \vv_2 \vv_2^*$ to completely describe $\mhtil_k$. Now if $H_{k}(1,1)\geq 0$, we set
\[
\mx_{k+1} = \begin{bmatrix}
\mp_k & \sqrt{\abs{\mh_{k}(1,1)}} \cdot \mi
\end{bmatrix}, \text{ and } \my_{k+1} = \mg_{k},
\]
and we set
\[
\mx_{k+1} = \mp_k, \text{ and } \my_{k+1} = \begin{bmatrix}
\mg_k & \sqrt{\abs{\mh_{k}(1,1)}} \cdot  \mi
\end{bmatrix},
\]
otherwise. Therefore we can also find a symmetric factorization of $\mn_{k+1}$ in $\Otil(n)$ time and this combined with Theorem \ref{thm:sym-nt-less-k} completes the proof.
\end{proof}

\section{Symmetric Factorization of Inverses of Hankel Matrices}
\label{sec:adv-recursive-algo}

In this section, we prove our result for symmetric factorizations of the inverses of Hankel matrices (Theorem \ref{thm:sym-inv-hankel}). We actually prove a more general result: for a given matrix $\mm\in\R^{n\times n}$ with Sylvester-type displacement rank of two and bit complexity $\ell$, we show how to find a representation of the matrices $\mb$ and $\mc$ with $n$ rows, $\Otil(n)$ columns, and bit complexity $\ell$ in time $\Otil(n^{\omega/2} \cdot \ell)$ such that $\norm{\mm - (\mb \mb^*-\mc \mc^*)}_{\fro} < \frac{1}{2^{\ell}}$. Then since the inverse of a Hankel matrix has a Sylvester-type displacement rank of two, this gives an algorithm for the inverse of a Hankel matrix.
A key technique in our algorithm is the following lemma which is similar to Lemma \ref{lemma:key-identity} with the important difference that when we apply the recursion arising from this lemma, the displacement rank of the matrix doubles (instead of staying the same). This then forces us to stop the recursion when the size of the blocks is $\sqrt{n}$.

\begin{lemma}
\label{lemma:general-fold}
Let $\ms = \frac{1}{2}(1+i)\mi+ \frac{1}{2}(1-i) \mj$, where $\mi$ is the identity matrix and $\mj$ is the exchange matrix (see Definition \ref{def:j}). Let $\mm$ be an $n\times n$ real symmetric matrix with Sylvester-type displacement rank of less than or equal to $r$ with respect to $(\mDelta,\mDelta^\top)$ and $(\mDelta^\top,\mDelta)$. Then $\real(\ms \mm \ms^*)$ is bisymmetric and has a Sylvester-type displacement rank of at most $2r$ with respect to $(\mDelta,\mDelta^\top)$ and $(\mDelta^\top,\mDelta)$. Moreover $\imag(\ms \mm \ms^*)$ is persymmetric Hermitian and has a Stein-type displacement rank of at most $2r+2$ with respect to $(\mDelta,\mDelta^\top)$. Also the diagonal entries of $\imag(\ms \mm \ms^*)$ are zero.
\end{lemma}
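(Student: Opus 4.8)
The plan is to follow the same computation as in Lemma~\ref{lemma:key-identity}, but to carefully track displacement ranks rather than exact Hankel/Toeplitz structure. First I would record the identity already established in the proof of Lemma~\ref{lemma:key-identity}, which holds verbatim for any matrix $\mm$ (it only used $\ms=\frac12(1+i)\mi+\frac12(1-i)\mj$ and $\mj\mj=\mi$, $\mj^*=\mj$): namely
\[
\ms\mm\ms^* = \tfrac12(\mm+\mj\mm\mj) + \tfrac{i}{2}(\mm\mj-\mj\mm).
\]
So $\real(\ms\mm\ms^*)=\tfrac12(\mm+\mj\mm\mj)$ and $\imag(\ms\mm\ms^*)=\tfrac12(\mm\mj-\mj\mm)$. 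The symmetry/persymmetry claims are then quick: since $\mm$ is symmetric, $\mj\mm\mj$ is symmetric, so $\real(\ms\mm\ms^*)$ is symmetric; centrosymmetry follows exactly as in Lemma~\ref{lemma:key-identity} via $(\mm+\mj\mm\mj)\mj = \mm\mj+\mj\mm = \mj(\mj\mm\mj+\mm)$, hence it is bisymmetric. For $\imag$: $(\mm\mj-\mj\mm)^*=\mj\mm^*-\mm^*\mj=\mj\mm-\mm\mj=-(\mm\mj-\mj\mm)$ using $\mm$ real symmetric, so $\mm\mj-\mj\mm$ is real skew-symmetric and therefore $i(\mm\mj-\mj\mm)/2$ is Hermitian; persymmetry is the statement $(\mm\mj-\mj\mm)\mj=\mj(\mm\mj-\mj\mm)^\top$, which again reduces to $\mm-\mj\mm\mj = \mj(\mj\mm-\mm\mj)^\top\mj^{-1}$-type bookkeeping. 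The zero-diagonal claim for $\imag$: the diagonal of a real skew-symmetric matrix is zero, and $\mm\mj-\mj\mm$ is real skew-symmetric, so $\imag(\ms\mm\ms^*)$ has zero diagonal.

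The substantive part is the two displacement-rank bounds, and this is where I expect the real work to be. For the Sylvester bound on $\real(\ms\mm\ms^*)$: I want to bound the rank of $\mDelta R - R\mDelta^\top$ where $R=\tfrac12(\mm+\mj\mm\mj)$. The term $\mDelta\mm-\mm\mDelta^\top$ has rank $\le r$ by hypothesis. For the term involving $\mj\mm\mj$, I would use the conjugation identities $\mj\mDelta\mj = \mDelta^\top$ (the exchange matrix turns the sub-diagonal shift into the super-diagonal shift) — more precisely, reversing rows and columns of $\mDelta$ yields $\mDelta^\top$, i.e.\ $\mj\mDelta^\top\mj=\mDelta$ as well. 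Then
\[
\mDelta(\mj\mm\mj) - (\mj\mm\mj)\mDelta^\top = \mj\big((\mj\mDelta\mj)\mm - \mm(\mj\mDelta^\top\mj)\big)\mj = \mj(\mDelta^\top\mm - \mm\mDelta)\mj,
\]
which has the same rank as $\mDelta^\top\mm-\mm\mDelta = -(\mm\mDelta-\mDelta^\top\mm)$; this is exactly the Sylvester displacement of $\mm$ with respect to $(\mDelta^\top,\mDelta)$, which is $\le r$ by the (second) hypothesis. Adding, $\mDelta R - R\mDelta^\top$ has rank $\le 2r$. The symmetric claim with $(\mDelta^\top,\mDelta)$ follows by the same computation with the roles of $\mDelta,\mDelta^\top$ swapped (and centrosymmetry of $R$ makes the two displacement operators related anyway).

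For the Stein-type bound on $N := \imag(\ms\mm\ms^*) = \tfrac{i}{2}(\mm\mj-\mj\mm)$ with respect to $(\mDelta,\mDelta^\top)$, I need to bound the rank of $N - \mDelta N\mDelta^\top$. The cleanest route is to relate $N-\mDelta N\mDelta^\top$ to the Sylvester displacements of $\mm$. Writing $M':=\mm\mj$ (Toeplitz-like when $\mm$ is Hankel-like, but in general just a matrix), note $\mDelta M'\mDelta^\top = \mDelta\mm\mj\mDelta^\top = \mDelta\mm(\mj\mDelta^\top) = \mDelta\mm(\mDelta\mj) = (\mDelta\mm\mDelta)\mj$... hmm, that is not quite a Stein displacement of $\mm$; more usefully I would use $\mj\mDelta^\top = \mDelta\mj$ to get $\mDelta(\mm\mj)\mDelta^\top = \mDelta\mm\mDelta^\top\mj\cdot$(no) — let me instead expand directly: $N - \mDelta N\mDelta^\top = \tfrac{i}{2}\big[(\mm\mj - \mj\mm) - \mDelta(\mm\mj-\mj\mm)\mDelta^\top\big]$, and substitute $\mj\mDelta^\top = \mDelta\mj$, $\mDelta^\top\mj = \mj\mDelta$, $\mDelta\mj=\mj\mDelta^\top$ to rewrite $\mDelta\mm\mj\mDelta^\top = \mDelta\mm\mDelta\mj$ and $\mDelta\mj\mm\mDelta^\top = \mj\mDelta^\top\mm\mDelta^\top$. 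So
\[
(N/\tfrac{i}{2}) - \mDelta(N/\tfrac{i}{2})\mDelta^\top = (\mm - \mDelta\mm\mDelta)\mj - \mj(\mm - \mDelta^\top\mm\mDelta^\top).
\]
Now $\mm-\mDelta\mm\mDelta$ is a Stein-type displacement; I would relate it to the Sylvester displacements via the telescoping identity $\mm - \mDelta\mm\mDelta = (\mDelta\mm - \mm\mDelta^\top)\mDelta^\top + \mDelta(\mm\mDelta^\top - \mDelta\mm) + (\mm - \mDelta\mDelta^\top\mm\mDelta^\top)$... the last term accounts for the fact that $\mDelta\mDelta^\top = \mi - e_ne_n^\top$ has corank one. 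Concretely $\mm = \mDelta\mDelta^\top\mm + e_n e_n^\top\mm$, so $\mm - \mDelta\mm\mDelta = \mDelta(\mDelta^\top\mm - \mm\mDelta)\cdot$(adjusting) plus at most one rank-one correction from each of the two "missing" row/column of $\mDelta\mDelta^\top$ and $\mDelta^\top\mDelta$. Doing this bookkeeping carefully, $\mm-\mDelta\mm\mDelta$ has rank $\le r + O(1)$, and likewise $\mm-\mDelta^\top\mm\mDelta^\top$; conjugating by $\mj$ preserves rank, and adding the two pieces gives rank $\le 2r + 2$ (the additive $2$ coming from the rank-one corrections at the two corners, possibly merged — one should double-check whether the constant is $2$ and not slightly larger, but the statement claims $2r+2$, so the corner corrections must collapse to a net $+2$). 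The main obstacle, and the step I would be most careful about, is exactly this accounting of the corner rank-one terms when passing between Stein- and Sylvester-type displacements: the identities $\mDelta\mDelta^\top = \mi - e_1e_1^\top$ or $\mi - e_ne_n^\top$ (depending on orientation convention) must be tracked so the final constant is $+2$ rather than $+4$; I would verify it on a small example (say $n=4$) before committing to the bound.
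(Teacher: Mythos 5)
Your proposal follows the paper's proof almost step for step: the same decomposition $\ms\mm\ms^* = \tfrac12(\mm+\mj\mm\mj)+\tfrac{i}{2}(\mm\mj-\mj\mm)$, the same conjugation trick $\mDelta\mj=\mj\mDelta^\top$ for the Sylvester bound on the real part, the same symmetry and zero-diagonal arguments, and the same reduction of the Stein displacement of $\mm\mj-\mj\mm$ to $\rank(\mm-\mDelta\mm\mDelta)$ and $\rank(\mm-\mDelta^\top\mm\mDelta^\top)$. The one place you stop short is exactly the step you flag yourself: converting the Stein-type displacements $\mm-\mDelta\mm\mDelta$ and $\mm-\mDelta^\top\mm\mDelta^\top$ into the hypothesized Sylvester-type ones with a controlled $+1$ each. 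The telescoping identity you write down,
\[
\mm - \mDelta\mm\mDelta \stackrel{?}{=} (\mDelta\mm - \mm\mDelta^\top)\mDelta^\top + \mDelta(\mm\mDelta^\top - \mDelta\mm) + (\mm - \mDelta\mDelta^\top\mm\mDelta^\top),
\]
does not hold (expanding the right side gives $\mm + 2\mDelta\mm\mDelta^\top - \mm(\mDelta^\top)^2 - \mDelta^2\mm - \mDelta\mDelta^\top\mm\mDelta^\top$, not $\mm - \mDelta\mm\mDelta$), and you also quote the wrong corner for $\mDelta\mDelta^\top$: with the paper's convention $\mDelta\mDelta^\top = \mi - e_1e_1^\top$ and $\mDelta^\top\mDelta = \mi - e_ne_n^\top$. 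You explicitly say you would have to verify the constant on a small example before committing, so the bound $2r+2$ is not actually established in your write-up.

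The fix is a one-line version of what you were reaching for. Since $\mDelta\mDelta^\top = \mi - e_1e_1^\top$, write $\mm = \mDelta\mDelta^\top\mm + e_1e_1^\top\mm$, which gives
\[
\mm - \mDelta\mm\mDelta = \mDelta(\mDelta^\top\mm - \mm\mDelta) + e_1e_1^\top\mm,
\]
so $\rank(\mm - \mDelta\mm\mDelta) \le \rank(\mDelta^\top\mm-\mm\mDelta) + 1 \le r+1$; symmetrically $\mm - \mDelta^\top\mm\mDelta^\top = \mDelta^\top(\mDelta\mm-\mm\mDelta^\top) + e_ne_n^\top\mm$ has rank at most $r+1$, and adding the two conjugated-by-$\mj$ pieces gives $2r+2$. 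The paper achieves the same $r+1$ per piece with a mildly different device: left-multiply $\mm - \mDelta\mm\mDelta$ by the full-rank cyclic shift $\widetilde{\mDelta}^\top$ (so rank is preserved), which produces $\widetilde{\mDelta}^\top\mm - \widetilde{\mi}\mm\mDelta$, and this differs from $\mDelta^\top\mm - \mm\mDelta$ by a rank-one matrix supported on a single row. Both routes are equivalent; yours, once corrected, is arguably slightly more direct because it avoids introducing the auxiliary matrices $\widetilde{\mDelta}$ and $\widetilde{\mi}$.
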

\begin{proof}
We first write each part of $\ms \mm \ms^*$. We have
\begin{align*}
\ms \mm \ms^* 
& =
\left( \frac{1}{2}(1+i)\mi+ \frac{1}{2}(1-i) \mj \right) \mm \left( \frac{1}{2}(1-i)\mi+ \frac{1}{2}(1+i) \mj \right)
\\ & = 
\frac{1}{2} \left( \mm + \mj \mm \mj \right) + \frac{i}{2} \left( \mm \mj - \mj \mm \right).
\end{align*}
Note that $\mm+\mj \mm \mj$ is symmetric because both $\mm$ and $\mj \mm \mj$ are symmetric. Moreover
\begin{align*}
(\mm+ \mj \mm \mj) \mj = \mm \mj + \mj \mm = \mj(\mj \mm \mj + \mm).
\end{align*}
Therefore $\real(\ms \mm \ms^*) = \frac{1}{2} \left( \mm + \mj \mm \mj \right)$ is bysymmetric. Now we show it has a Sylvester-type displacement rank of at most $2r$ with respect to $(\mDelta,\mDelta^\top)$. We need to show that 
\[
\rank(\mDelta (\mm + \mj \mm \mj) - (\mm + \mj \mm \mj) \mDelta^\top) \leq 2r.
\]
One can easily verify that $\mDelta \mj = \mj \mDelta^\top$. Therefore
\begin{align*}
\mDelta \mj \mm \mj - \mj \mm \mj \mDelta^\top = \mj(\mDelta^\top \mm - \mm \mDelta) \mj.
\end{align*}
Thus since $\mj$ is a full rank matrix,
\begin{align*}
\rank (\mDelta (\mm + \mj \mm \mj) - (\mm + \mj \mm \mj) \mDelta^\top) 
& \leq 
\rank (\mDelta \mm - \mm \mDelta^\top) + \rank (\mDelta \mj \mm \mj - \mj \mm \mj \mDelta^\top)
\\ & =
\rank (\mDelta \mm - \mm \mDelta^\top) + \rank (\mj(\mDelta^\top \mm - \mm \mDelta ) \mj)
\\ & = 
\rank (\mDelta \mm - \mm \mDelta^\top) + \rank (\mDelta^\top \mm - \mm \mDelta)
\\ & \leq
2r.
\end{align*}
We can bound the Sylvester-type displacement rank of $\real(\ms \mm \ms^*)$ with respect to $(\mDelta^\top, \mDelta)$ in a similar way.
Now we turn to the imaginary part. Since $\mm$ is real and symmetric $\mm^*=\mm$. Hence we have
\begin{align*}
(i(\mm \mj-\mj \mm))^* = -i (\mj \mm^* - \mm^* \mj) = -i (\mj \mm - \mm \mj) = i(\mm \mj - \mj \mm).
\end{align*}
Therefore $\imag(\ms \mm \ms^*)$ is Hermitian. Again since $\mm$ is real and symmetric $\mm^\top=\mm$. Therefore
\begin{align*}
i(\mm \mj - \mj \mm) \mj = i( \mm -\mj \mm \mj)=i\mj(\mj \mm-\mm \mj) = i\mj(\mm^\top \mj-\mj \mm^\top)^\top = i \mj (\mm \mj - \mj \mm)^\top.
\end{align*}

Now we need to show that $\rank ( (\mm \mj - \mj \mm ) - \mDelta (\mm \mj - \mj \mm) \mDelta^\top) \leq 2r$.
We have $\mm \mj - \mDelta \mm \mj \mDelta^\top = (\mm- \mDelta \mm\mDelta)J$. Therefore
\begin{align*}
\rank ( \mm \mj - \mDelta \mm \mj \mDelta^\top) 
& = 
\rank (\mm- \mDelta \mm\mDelta).
\end{align*}
Let $\widetilde{\mDelta}$ be a matrix obtained from $\mDelta$ by changing the $(1,n)$ entry from zero to one. For example
\[
\widetilde{\mDelta} = \begin{bmatrix}
    0 & 0 & 0 & 1 \\
    1 & 0 & 0 & 0 \\
    0 & 1 & 0 & 0 \\
    0 & 0 & 1 & 0 \\
\end{bmatrix}.
\]
Therefore $\widetilde{\mDelta}$ is full-rank. Moreover let $\widetilde{\mi}$ be a matrix obtained by $\mi$ by changing the $(1,1)$ entry from one to zero. For example
\[
\widetilde{\mi} = \begin{bmatrix}
    0 & 0 & 0 & 0 \\
    0 & 1 & 0 & 0 \\
    0 & 0 & 1 & 0 \\
    0 & 0 & 0 & 1 \\
\end{bmatrix}.
\]
Then we have $\widetilde{\mDelta}^\top \mDelta = \widetilde{\mi}$. Since $\widetilde{\mDelta}$ is full rank,
\[
\rank (\mm- \mDelta \mm\mDelta) = \rank (\widetilde{\mDelta}^\top\mm- \widetilde{\mi} \mm\mDelta)
\]
Now note that $\widetilde{\mDelta}$ differ from $\mDelta$ only in the first row and similarly $\widetilde{\mi}$ differ from $\mi$ only in the first row. Therefore 
\[
\rank (\widetilde{\mDelta}^\top\mm- \widetilde{\mi} \mm\mDelta) \leq \rank (\mDelta^\top\mm-  \mm\mDelta) + 1.
\]
Then since the Sylvester-type displacement rank of $\mm$ with respect to $(\mDelta^\top,\mDelta)$ is at most $r$, we have
\[
\rank ( \mm \mj - \mDelta \mm \mj \mDelta^\top) \leq r + 1.
\]
In a similar fashion, one can verify that
\begin{align*}
\rank ( \mj \mm - \mDelta \mj \mm \mDelta^\top) \leq r + 1. 
\end{align*}
Therefore
\begin{align*}
\rank ( (\mm \mj - \mj \mm) - \mDelta (\mm \mj - \mj \mm) \mDelta^\top) 
& \leq  
\rank ( \mm \mj - \mDelta \mm \mj \mDelta^\top) + \rank ( \mj \mm - \mDelta \mj \mm \mDelta^\top)  
\\ & \leq 2r + 2.
\end{align*}
Finally note that since $\mm$ is symmetric $\mm_{i, n + 1 - i} = \mm_{n + 1 - i, i}$ for all $i\in [n]$. Therefore since $(\mm \mj)_{i,i} = \mm_{i, n + 1 - i}$ and $(\mj \mm)_{i,i} = \mm_{n + 1 - i, i}$, we have $(\mm \mj - \mj \mm)_{i,i} = 0$. Thus the diagonal entries of $\imag(\ms \mm \ms^*)$ are zero.
\end{proof}

We are now equipped to prove our result for general matrices with small displacement ranks. We state the theorem for the inverse of Hankel matrices, but our algorithm and proof work for these general matrices due to Lemma \ref{lemma:general-fold}.

\RestyleAlgo{algoruled}
\IncMargin{0.15cm}
\begin{algorithm}[t]
\textbf{Input:} Hankel matrix $\mm\in \F^{2^k\times 2^k}$ \\
Set $\mm_0 = \mm$ \\
\For {$t=1,\ldots,\frac{k}{2}$}{
Set $\mn_t = i \cdot \imag(\mstil_{t} \mm_{t-1} \mstil_{t}^*)$ \\
Set $\mm_t = \real(\mstil_{t} \mm_{t-1} \mstil_{t}^*)$ \\
}
Set $\mn_{\frac{k}{2}+1} = \mm_{\frac{k}{2}}$ \\
\For {$t=1,\ldots,\frac{k}{2}+1$}{
Set $\mx_t$ and $\my_t$ to be matrices such that $\mn_t = \mx_t \mx_t^* - \my_t \my_t^*$ \\
Set $\mb_t = \mstil_1^* \mstil_2^* \cdots \mstil_{t-1}^* \mstil_t^* \mx_t$ and $\mc_t = \mstil_1^* \mstil_2^* \cdots \mstil_{t-1}^* \mstil_t^* \my_t$
}
\Return $\begin{bmatrix}
\mb_{\frac{k}{2}+1} & \mb_{\frac{k}{2}} & \cdots & \mb_2 & \mb_1
\end{bmatrix}$ and $\begin{bmatrix}
\mc_{\frac{k}{2}+1} & \mc_{\frac{k}{2}} & \cdots & \mc_2 & \mc_1
\end{bmatrix}$
\caption{Symmetric Factorization of General Matrices with Small Sylvester-Type Displacement Rank}
\label{alg:sylvester-sym-main-algo}
\end{algorithm}

\symmetricInverseHankel*
\begin{proof}
We consider a general matrix $\mm$ that has Sylvester-type displacement rank of two with respect to $(\mDelta, \mDelta^\top)$ and $(\mDelta^\top, \mDelta)$. For example, one can consider $\mm = \mh^{-1}$. Note that we can find a representation of $\mh^{-1}$ as $\mx \my^\top$ in $\Otil(n\cdot \ell)$ time using the approach of \cite{peng2021solving}.
Without loss of generality, we assume that $n=2^k$ and $k$ is even because otherwise, we can extend the matrix to a size of power of four by appropriately copying the entries to make sure Sylvester-type displacement rank does not change. 

We show that Algorithm \ref{alg:sylvester-sym-main-algo} outputs the desired factorization in the specified running time and bit complexity. Note that this algorithm is similar to the one we used for the symmetric factorization of Hankel matrices (Algorithm \ref{alg:hankel-sym-main-algo}). The main difference is that we recurse only for $\frac{k}{2}$ iterations. This is because by Lemma \ref{lemma:general-fold}, the blocks of $\mm_t$ have a Sylvester-type displacement rank of $2^{t+1}$ and a size of $\frac{n}{2^t}\times \frac{n}{2^t}$. So for $t=\frac{k}{2}$, the size of each block is $\sqrt{n} \times \sqrt{n}$ while its displacement rank is $2 \sqrt{n}$. In other words, the displacement rank is more than the rank of the matrix and therefore it does not help with speeding up the computation of symmetric factorization. The rest of the proof basically is similar to the symmetric factorization of Hankel matrices.

Similar to Lemma \ref{lemma:h-struct} and \ref{lemma:n-struct}, $\mn_t$ consists of $2^{t-1}\times 2^{t-1}$ Hermitian blocks. However instead of being Toeplitz, by Lemma \ref{lemma:general-fold} each block has a Stein-type displacement rank of $2^{t+1} +2$ with respect to $(\mDelta,\mDelta^\top)$. Moreover each block is persymmetric. The structure of the blocks is also similar to Lemma \ref{lemma:n-struct}. More specifically, the first block row of $\mn_t$ consists of $2^{t-1}$ matrices
\[
\begin{bmatrix}
\mq_1 & \mq_2 & \cdots & \mq_{2^{t}-1} & \mq_{2^{t-1}}
\end{bmatrix}.
\]
The second block-row is
\[
\begin{bmatrix}
\mq_2 & \mj \mq_1 \mj & \mq_4 & \mj \mq_3 \mj & \cdots & \mq_{2^{t-1}} & \mj \mq_{2^{t-1} - 1} \mj 
\end{bmatrix}
\]
For $s=1,\ldots,t-2$, let $\mp_{s}$ be the matrix consisting of block-rows $1$ to $2^s$, and $\mptil_{s}$ be the matrix consisting of block-rows $2^{s}+1$ to $2^{s+1}$. Let
\[
\mp_s =
\begin{bmatrix}
\mp_{s,1} & \mp_{s,2} & \cdots & \mp_{s,2^{t-s}}
\end{bmatrix}.
\]
Then
\[
\mptil_{s} = \begin{bmatrix}
\mp_{s,2} & \mp_{s,1} & \mp_{s,4} & \mp_{s,3} & \cdots & \mp_{s,2^{t-s}} & \mp_{s,2^{t-s}-1}
\end{bmatrix}.
\]
The structure of block columns of $\mn_t$ is also similar to the above. For example, the matrix consisting only of the first row and the first column of $\mn_t$ is like the following.
\[
\mz_t = \begin{bmatrix}
\mq_1 & \mq_2 & \mq_3 & \cdots & \mq_{2^{t-1}} \\
\mq_2 & 0 & 0 & \cdots & 0 \\
\mq_3 & 0 & 0 & \cdots & 0 \\
\vdots & \vdots & \vdots & \ddots & \vdots \\
\mq_{2^{t-1}} & 0 & 0 & \cdots & 0
\end{bmatrix}. 
\]
All the other block-rows and block-columns are obtained by taking permutations of the first block-row and the first block-column, respectively. Moreover the block-row and the block-column with the same index are obtained by the same permutation. Since $\mq_1$ is a Hermitian matrix, $\mq_1-\mDelta \mq_1 \mDelta^\top$ is also Hermitian. Therefore the eigenvalues of $\mq_1-\mDelta \mq_1 \mDelta^\top$ are real and since it is an $\frac{n}{2^t} \times \frac{n}{2^{t}}$ of rank $2^{t+1}+2$, we can find its eigenvalue decomposition in $\Otil(\frac{n}{2^t}  \cdot 2^{t \cdot (\omega-1)} \cdot \ell)$. Then we can separate the eigenvectors associated with positive and negative eigenvalues into matrices $\mdtil$ and $\metil$, respectively, and write $\mq_1-\mDelta \mq_1 \mDelta^\top$ as $\mdtil \mdtil^* - \metil \metil^*$. Then by writing 
\[
\mq_1 = \mq_1-\mDelta \mq_1 \mDelta^\top + \mDelta(\mq_1-\mDelta \mq_1 \mDelta^\top) \mDelta^\top + \mDelta \mDelta (\mq_1-\mDelta \mq_1 \mDelta^\top) \mDelta^\top \mDelta^\top + \cdots,
\]
we can take the appropriate shifts and write $\mq = \md \md^* - \mc \mc^*$, where
\[
\md = \begin{bmatrix}
    \mdtil & \mDelta \mdtil & \mDelta \mDelta \mdtil & \cdots
\end{bmatrix}, ~~ \text{and} ~~
\me = \begin{bmatrix}
    \metil & \mDelta \metil & \mDelta \mDelta \metil & \cdots
\end{bmatrix}.
\]
Then we can write $\mz_t$ as $\mf_1 \mf_1^* - \mg_1 \mg_1^*$, where
\[
\mf_1 = 
\begin{bmatrix}
    \md & \mi \\
    0 & \mq_2 \\
    0 & \mq_3 \\
    \vdots & \vdots \\
    0 & \mq_{2^{t-1}} \\
\end{bmatrix}, ~~ \text{and} ~~
\mg_1 = 
\begin{bmatrix}
    \me & \mi & 0 \\
    0 & 0 & \mq_2 \\
    0 & 0 & \mq_3 \\
    \vdots & \vdots \\
    0 & 0 & \mq_{2^{t-1}} \\
\end{bmatrix}
\]
Then since the rest of block-rows and block-columns of $\mn_t$ are obtained by permuting its first block-row and block-column, we can obtain the factorization by taking appropriate shift and permutations of $\mf_1$ and $\mg_1$. This then produces $\mbtil_t$ and $\mctil_t$ such that $\mn_t = \mbtil_t \mbtil_t^* - \mctil_t \mctil_t^*$. Then by defining $\mb_t = \mstil_1^* \mstil_2^* \cdots \mstil_{t-1}^* \mstil_t^* \mbtil_t$ and $\mc_t = \mstil_1^* \mstil_2^* \cdots \mstil_{t-1}^* \mstil_t^* \mctil_t$, and 
$\mb = \begin{bmatrix}
\mb_{\frac{k}{2}+1} & \mb_{\frac{k}{2}} & \cdots & \mb_2 & \mb_1
\end{bmatrix}$ and $\mc = \begin{bmatrix}
\mc_{\frac{k}{2}+1} & \mc_{\frac{k}{2}} & \cdots & \mc_2 & \mc_1
\end{bmatrix}$, we have
\[
\mm = \mb \mb^* - \mc \mc^*.
\]
Note that for each $\mn_t$, we only need to compute an eigendecomposition for its corresponding $\mq_1 - \mDelta \mq_1 \mDelta^\top$, and the rest of the decomposition for $\mn_t$ follows from deterministic shifts and permutations. We do not even need to compute $\mq_2,\cdots,\mq_{2^{t-1}}$ since the entries of them can be obtained by $O(\log n)$ addition/subtraction of the entries of the original matrix. Since $\mq_1$ is obtained by taking summations over principal submatrices of the original matrix, its bit complexity and operator norm are the same as the original matrix (up to a $\log n$ factor). Therefore the cost of computing this eigendecomposition is 
\[
\Otil(\frac{n}{2^t}  \cdot 2^{t \cdot (\omega-1)} \cdot \ell).
\]
because $\mq_1-\mDelta \mq_1 \mDelta^\top$ is an $\frac{n}{2^t} \times \frac{n}{2^{t}}$ of rank $2^{t+1}+2$, and we can add a random matrix with small entries (of less than $\frac{1}{n^2 \ell}$). Adding this random matrix, does not cause us to go above the error threshold but it causes a gap between the eigenvalues of $\mq_1-\mDelta \mq_1 \mDelta^\top$ and causes it to have $\poly{n\cdot 2^\ell}$ condition number which results in the running time stated above --- see \cite{banks2022pseudospectral}.
Therefore the total cost of computing the representation is
\[
\sum_{t=1}^{1+k/2}\Otil(\frac{n}{2^t}  \cdot 2^{t \cdot (\omega-1)}\cdot \ell) = \sum_{t=1}^{1+k/2}\Otil(n  \cdot 2^{t \cdot (\omega-2)}\cdot \ell).
\]
Since $t\leq \frac{k}{2}+1$ and $n^{1+k/2}=2\sqrt{n}$, this is bounded by $\Otil(n^{\omega/2}\cdot \ell)$.
\end{proof}

\section{Discussion and Conclusion}
\label{sec:conclusion}
In this paper, we presented novel super-fast algorithms to find a representation of symmetric factorizations of the form $\mb \mb^* - \mc \mc^*$ for Hankel matrices and their inverses. Our running times for Hankel matrices and their inverses are $\Otil(n \cdot \ell)$ and $\Otil(n^{\omega/2} \cdot \ell)$. We also conjectured that it is possible to find factorizations of the form $\mb \mb^*$ for these problems in the same running times. We explained how our conjectures lead to faster algorithms for solving a batch of linear systems faster than the approach of \cite{peng2021solving} and how they lead to a faster-than-matrix-multiplication algorithm for solving sparse poly-conditioned linear programs.

Here we present a statement that has the same implications. This is weaker than our conjectures but stronger than the results we proved. Suppose we find $\mb$ and $\mc$ such that $\mb \mb^* - \mc \mc^* = (\mk^\top \ma \mk)^{-1}$ and $\mb \mb^* \preceq \text{poly}(n) (\mk^\top \ma \mk)^{-1}$, then since $\mb \mb^* - \mc \mc^* = (\mk^\top \ma \mk)^{-1} \succeq 0$, $\mc \mc^* \preceq \text{poly}(n) (\mk^\top \ma \mk)^{-1}$. Then by the argument of Equations \eqref{eq:sym-argument}, $\mk \mb$ and $\mk \mc$, both will have low bit-complexity since 
\[\mk \mb \mb^* \mk^\top \preceq \text{poly}(n) \mk (\mk^\top \ma \mk)^{-1} \mk^\top = \text{poly}(n) \ma^{-1}\] 
and 
\[\mk \mc \mc^* \mk^\top \preceq \text{poly}(n) \mk (\mk^\top \ma \mk)^{-1} \mk^\top = \text{poly}(n) \ma^{-1}. \]
Then using the linear operator $\mbtil \mbtil^* - \mctil \mctil^*$ for $\mbtil=\mk \mb$ and $\mctil = \mk \mc$ leads to the same running times presented in Equations \eqref{eq:our-running-time} and \eqref{eq:our-inv-maintenance-run-time}.

\section*{Acknowledgement}

We express our gratitude to Richard Peng for providing valuable comments and engaging discussions that greatly influenced the development of this paper. Furthermore, we extend our appreciation to Santosh Vempala for insightful discussions regarding sparse linear systems. Their valuable input and expertise have played a pivotal role in inspiring the author to produce this work.

\bibliographystyle{alpha}
\bibliography{ref}

\newcommand{\etalchar}[1]{$^{#1}$}
\begin{thebibliography}{BGVKS22}

\bibitem[AG87]{ammar1987generalized}
Gregory~S Ammar and William~B Gragg.
\newblock The generalized {S}chur algorithm for the superfast solution of
  {T}oeplitz systems.
\newblock In {\em Rational approximation and its applications in mathematics
  and physics}, pages 315--330. Springer, 1987.

\bibitem[AG88]{ammar1988superfast}
Gregory~S Ammar and William~B Gragg.
\newblock Superfast solution of real positive definite {T}oeplitz systems.
\newblock {\em SIAM Journal on Matrix Analysis and Applications}, 9(1):61--76,
  1988.

\bibitem[AKPS19]{adil2019iterative}
Deeksha Adil, Rasmus Kyng, Richard Peng, and Sushant Sachdeva.
\newblock Iterative refinement for $\ell_p$-norm regression.
\newblock In {\em Proceedings of the Thirtieth Annual ACM-SIAM Symposium on
  Discrete Algorithms}, pages 1405--1424. SIAM, 2019.

\bibitem[AW21]{alman2021refined}
Josh Alman and Virginia~Vassilevska Williams.
\newblock A refined laser method and faster matrix multiplication.
\newblock In {\em Proceedings of the 2021 ACM-SIAM Symposium on Discrete
  Algorithms (SODA)}, pages 522--539. SIAM, 2021.

\bibitem[Bar69]{bareiss1969numerical}
Erwin~H Bareiss.
\newblock Numerical solution of linear equations with {T}oeplitz and vector
  {T}oeplitz matrices.
\newblock {\em Numerische Mathematik}, 13(5):404--424, 1969.

\bibitem[BBDHS95]{bojanczyk1995stability}
Adam~W Bojanczyk, Richard~P Brent, Frank~R De~Hoog, and Douglas~R Sweet.
\newblock On the stability of the {B}areiss and related {T}oeplitz
  factorization algorithms.
\newblock {\em SIAM Journal on Matrix Analysis and Applications}, 16(1):40--57,
  1995.

\bibitem[BCLL18]{bubeck2018homotopy}
S{\'e}bastien Bubeck, Michael~B Cohen, Yin~Tat Lee, and Yuanzhi Li.
\newblock An homotopy method for $\ell_p$ regression provably beyond
  self-concordance and in input-sparsity time.
\newblock In {\em Proceedings of the 50th Annual ACM SIGACT Symposium on Theory
  of Computing}, pages 1130--1137, 2018.

\bibitem[BGVKS22]{banks2022pseudospectral}
Jess Banks, Jorge Garza-Vargas, Archit Kulkarni, and Nikhil Srivastava.
\newblock Pseudospectral shattering, the sign function, and diagonalization in
  nearly matrix multiplication time.
\newblock {\em Foundations of Computational Mathematics}, pages 1--89, 2022.

\bibitem[BLL{\etalchar{+}}21]{BLLSS0W21}
Jan van~den Brand, Yin~Tat Lee, Yang~P. Liu, Thatchaphol Saranurak, Aaron
  Sidford, Zhao Song, and Di~Wang.
\newblock Minimum cost flows, {MDP}s, and $\ell_{1}$-regression in nearly
  linear time for dense instances.
\newblock In Samir Khuller and Virginia~Vassilevska Williams, editors, {\em
  {STOC} '21: 53rd Annual {ACM} {SIGACT} Symposium on Theory of Computing,
  Virtual Event, Italy, June 21-25, 2021}, pages 859--869. {ACM}, 2021.

\bibitem[BLSS20]{BrandLSS20}
Jan van~den Brand, Yin~Tat Lee, Aaron Sidford, and Zhao Song.
\newblock Solving tall dense linear programs in nearly linear time.
\newblock In {\em Proccedings of the 52nd Annual {ACM} {SIGACT} Symposium on
  Theory of Computing, {STOC} 2020, Chicago, IL, USA, June 22-26, 2020}, pages
  775--788. {ACM}, 2020.

\bibitem[Bra]{Complexity}
Jan van~den Brand.
\newblock Complexity term balancer.
\newblock \url{www.ocf.berkeley.edu/~vdbrand/complexity/}.
\newblock Tool to balance complexity terms depending on fast matrix
  multiplication.

\bibitem[Bra20]{van2020deterministic}
Jan van~den Brand.
\newblock A deterministic linear program solver in current matrix
  multiplication time.
\newblock In {\em Proceedings of the Fourteenth Annual ACM-SIAM Symposium on
  Discrete Algorithms}, pages 259--278. SIAM, 2020.

\bibitem[Chu89]{chun1989fast}
Joohwan Chun.
\newblock {\em Fast array algorithms for structured matrices}.
\newblock Stanford University, 1989.

\bibitem[CK22]{casacuberta2021faster}
S{\'\i}lvia Casacuberta and Rasmus Kyng.
\newblock Faster sparse matrix inversion and rank computation in finite fields.
\newblock In {\em ITCS}, 2022.

\bibitem[CKM{\etalchar{+}}11]{christiano2011electrical}
Paul Christiano, Jonathan~A Kelner, Aleksander Madry, Daniel~A Spielman, and
  Shang-Hua Teng.
\newblock Electrical flows, laplacian systems, and faster approximation of
  maximum flow in undirected graphs.
\newblock In {\em Proceedings of the forty-third annual ACM symposium on Theory
  of computing}, pages 273--282, 2011.

\bibitem[CLRS]{cormen2022introduction}
Thomas~H Cormen, Charles~E Leiserson, Ronald~L Rivest, and Clifford Stein.
\newblock {\em Introduction to algorithms}.
\newblock MIT press, second edition.

\bibitem[CLS21]{cohen2021solving}
Michael~B Cohen, Yin~Tat Lee, and Zhao Song.
\newblock Solving linear programs in the current matrix multiplication time.
\newblock {\em Journal of the ACM (JACM)}, 68(1):1--39, 2021.

\bibitem[CW82]{coppersmith1982asymptotic}
Don Coppersmith and Shmuel Winograd.
\newblock On the asymptotic complexity of matrix multiplication.
\newblock {\em SIAM Journal on Computing}, 11(3):472--492, 1982.

\bibitem[DDH07]{DemmelDH07}
James Demmel, Ioana Dumitriu, and Olga Holtz.
\newblock Fast linear algebra is stable.
\newblock {\em Numerische Mathematik}, 108(1):59--91, 2007.

\bibitem[DDHK07]{demmel2007fast}
James Demmel, Ioana Dumitriu, Olga Holtz, and Robert Kleinberg.
\newblock Fast matrix multiplication is stable.
\newblock {\em Numerische Mathematik}, 106(2):199--224, 2007.

\bibitem[DS08]{daitch2008faster}
Samuel~I Daitch and Daniel~A Spielman.
\newblock Faster approximate lossy generalized flow via interior point
  algorithms.
\newblock In {\em Proceedings of the fortieth annual ACM symposium on Theory of
  computing}, pages 451--460, 2008.

\bibitem[Dym89]{dym1989hermitian}
Harry Dym.
\newblock On {H}ermitian block {H}ankel matrices, matrix polynomials, the
  {H}amburger moment problem, interpolation and maximum entropy.
\newblock {\em Integral Equations and Operator Theory}, 12(6):757--812, 1989.

\bibitem[EGG{\etalchar{+}}06]{eberly2006solving}
Wayne Eberly, Mark Giesbrecht, Pascal Giorgi, Arne Storjohann, and Gilles
  Villard.
\newblock Solving sparse rational linear systems.
\newblock In {\em Proceedings of the 2006 international symposium on Symbolic
  and algebraic computation}, pages 63--70, 2006.

\bibitem[EGG{\etalchar{+}}07]{eberly2007faster}
Wayne Eberly, Mark Giesbrecht, Pascal Giorgi, Arne Storjohann, and Gilles
  Villard.
\newblock Faster inversion and other black box matrix computations using
  efficient block projections.
\newblock In {\em Proceedings of the 2007 international symposium on Symbolic
  and algebraic computation}, pages 143--150, 2007.

\bibitem[GKS23]{gao2023robust}
Yuan Gao, Rasmus Kyng, and Daniel~A Spielman.
\newblock Robust and practical solution of laplacian equations by approximate
  elimination.
\newblock {\em arXiv preprint arXiv:2303.00709}, 2023.

\bibitem[GPV21]{ghadiri2021sparse}
Mehrdad Ghadiri, Richard Peng, and Santosh~S Vempala.
\newblock Faster $p$-norm regression using sparsity.
\newblock {\em arXiv preprint arXiv:2109.11537}, 2021.

\bibitem[GU18]{gall2018improved}
Fran{\c{c}}ois~Le Gall and Florent Urrutia.
\newblock Improved rectangular matrix multiplication using powers of the
  coppersmith-winograd tensor.
\newblock In {\em Proceedings of the Twenty-Ninth Annual ACM-SIAM Symposium on
  Discrete Algorithms}, pages 1029--1046. SIAM, 2018.

\bibitem[Has96]{hasan1996hankel}
Mohammad~A Hasan.
\newblock Hankel matrices and their applications to the numerical factorization
  of polynomials.
\newblock {\em Journal of Mathematical analysis and applications},
  197(2):459--488, 1996.

\bibitem[Kar84]{Karmarkar84}
N.~Karmarkar.
\newblock A new polynomial-time algorithm for linear programming.
\newblock {\em Combinatorica}, 4(4):373--396, 1984.

\bibitem[KKM79]{kailath1979displacement}
Thomas Kailath, Sun-Yuan Kung, and Martin Morf.
\newblock Displacement ranks of matrices and linear equations.
\newblock {\em Journal of Mathematical Analysis and Applications},
  68(2):395--407, 1979.

\bibitem[KMP14]{koutis2014approaching}
Ioannis Koutis, Gary~L Miller, and Richard Peng.
\newblock Approaching optimality for solving sdd linear systems.
\newblock {\em SIAM Journal on Computing}, 43(1):337--354, 2014.

\bibitem[KS95]{kailath1995displacement}
Thomas Kailath and Ali~H Sayed.
\newblock Displacement structure: theory and applications.
\newblock {\em SIAM review}, 37(3):297--386, 1995.

\bibitem[KVM78]{kailath1978inverses}
Thomas Kailath, A~Vieira, and M~Morf.
\newblock Inverses of {T}oeplitz operators, innovations, and orthogonal
  polynomials.
\newblock {\em SIAM review}, 20(1):106--119, 1978.

\bibitem[LS87]{llovet1987hankel}
Juan Llovet and Juan~R Sendra.
\newblock Hankel matrices and polynomials.
\newblock In {\em International Conference on Applied Algebra, Algebraic
  Algorithms, and Error-Correcting Codes}, pages 321--333. Springer, 1987.

\bibitem[LV21]{lee2021tutorial}
Yin~Tat Lee and Santosh~S Vempala.
\newblock Tutorial on the robust interior point method.
\newblock {\em arXiv preprint arXiv:2108.04734}, 2021.

\bibitem[Nie22]{nie2022matrix}
Zipei Nie.
\newblock Matrix anti-concentration inequalities with applications.
\newblock In {\em Proceedings of the 54th Annual ACM SIGACT Symposium on Theory
  of Computing}, pages 568--581, 2022.

\bibitem[Pan01]{pan2001structured}
Victor Pan.
\newblock {\em Structured matrices and polynomials: unified superfast
  algorithms}.
\newblock Springer Science \& Business Media, 2001.

\bibitem[PV21]{peng2021solving}
Richard Peng and Santosh Vempala.
\newblock Solving sparse linear systems faster than matrix multiplication.
\newblock In {\em Proceedings of the 2021 ACM-SIAM Symposium on Discrete
  Algorithms (SODA)}, pages 504--521. SIAM, 2021.

\bibitem[S{\etalchar{+}}69]{strassen1969gaussian}
Volker Strassen et~al.
\newblock Gaussian elimination is not optimal.
\newblock {\em Numerische mathematik}, 13(4):354--356, 1969.

\bibitem[ST14]{spielman2014nearly}
Daniel~A Spielman and Shang-Hua Teng.
\newblock Nearly linear time algorithms for preconditioning and solving
  symmetric, diagonally dominant linear systems.
\newblock {\em SIAM Journal on Matrix Analysis and Applications},
  35(3):835--885, 2014.

\bibitem[Vai89]{vaidya1989speeding}
Pravin~M. Vaidya.
\newblock Speeding-up linear programming using fast matrix multiplication
  (extended abstract).
\newblock In {\em 30th Annual Symposium on Foundations of Computer Science,
  Research Triangle Park, North Carolina, USA, 30 October - 1 November 1989},
  pages 332--337, 1989.

\bibitem[VBK98]{van1998stabilized}
Marc Van~Barel and Peter Kravanja.
\newblock A stabilized superfast solver for indefinite {H}ankel systems.
\newblock {\em Linear Algebra and its Applications}, 284(1-3):335--355, 1998.

\bibitem[Woo50]{woodbury1950inverting}
Max~A Woodbury.
\newblock {\em Inverting modified matrices}.
\newblock Statistical Research Group, 1950.

\bibitem[XXCB14]{xi2014superfast}
Yuanzhe Xi, Jianlin Xia, Stephen Cauley, and Venkataramanan Balakrishnan.
\newblock Superfast and stable structured solvers for {T}oeplitz least squares
  via randomized sampling.
\newblock {\em SIAM Journal on Matrix Analysis and Applications}, 35(1):44--72,
  2014.

\bibitem[XXG12]{xia2012superfast}
Jianlin Xia, Yuanzhe Xi, and Ming Gu.
\newblock A superfast structured solver for {T}oeplitz linear systems via
  randomized sampling.
\newblock {\em SIAM Journal on Matrix Analysis and Applications},
  33(3):837--858, 2012.

\end{thebibliography}

\clearpage
\appendix
\section{Omitted Proofs}
\label{sec:app-proofs}
\applyK*
\begin{proof}
We first discuss he running time of computing $\mk \mb$.
We partition the rows of $\mb$ to $m$ blocks $\mb_1,\cdots, \mb_m\in\R^{s\times n}$ as the following
\[
\mb = \begin{bmatrix}
    \mb_1 \\ \mb_2 \\ \vdots \\ \mb_m
\end{bmatrix}.
\]
Then we have
\[
\mk \mb = \sum_{i=1}^m \ma^{i-1} \mg \mb_i.
\]
For $j\in[m]$, define
\[
\mm_{j} = \sum_{i=j}^m \ma^{i-j} \mg \mb_i.
\]
Therefore $\mk \mb = \mm_1$ and for $j \geq 2$,
\begin{align}
\label{eq:kb-recursion}
\mm_{j-1} 
& = 
\sum_{i=j-1}^m \ma^{i-j+1} \mg \mb_i 
\\ & =\nonumber
\mg \mb_{j-1} + \sum_{i=j}^m \ma^{i-j+1} \mg \mb_i
\\ & =\nonumber
\mg \mb_{j-1} + \ma \sum_{i=j}^m \ma^{i-j} \mg \mb_i
\\ & = \nonumber
\mg \mb_{j-1} + \ma \mm_j.
\end{align}
Therefore given $\mm_j$, we can compute $\mm_{j-1}$ in time $\Otil(mnr \cdot \ell + \nnz{\ma} \cdot r \cdot (\ell+\ell_j))$, where $\ell_j$ is the bit complexity of $\mm_j$. The first term in the running time is for computing $\mg \mb_{j-1}$ and essentially follows from the fact that each column of $\mb_{j-1}$ can be multiplied by $\mg$ in $\Otil(m n \cdot \ell)$ time because the bit complexity of $\mb$ is at most $m \ell$ and $\mg$ has $\Otil(n)$ nonzero entries.
The second term in the running time is for computing $\ma \mm_j$ and follows from a similar argument. Moreover $\ell_{j-1} = \Otil(\max\{m\cdot \ell,\ell+\ell_j\})$.

Since $\mm_m=\mg \mb_{m}$, it can be computed in $\Otil(mn r \cdot \ell)$ time and it has a bit complexity of $\Otil(m\cdot \ell)$. Therefore by the recurrence relation \eqref{eq:kb-recursion}, $\mk\mb = \mm_1$ can be computed in time 
\[
\Otil(\nnz{\ma} \cdot r \cdot m^2 \cdot \ell)
\]
because the recursion only goes for $m$ steps and the bit complexity of the intermediate matrices stays $\Otil(m\cdot \ell)$.

Computing $\mk^\top \mb$ is less complicated. Note that 
\[
\mk^\top \mb = \begin{bmatrix}
    \mg^\top \mb \\
    \mg^\top \ma^\top \mb \\
    \mg^\top (\ma^\top)^2 \mb \\
    \vdots \\
    \mg^\top (\ma^\top)^{m-1} \mb
\end{bmatrix}.
\]
Given $(\ma^\top)^j\mb$, $\mg^\top (\ma^\top)^j \mb$ and $(\ma^\top)^{j+1}\mb$ can be computed in time $\Otil(n\cdot r \cdot (\ell+\ell_j))$ and $\Otil(\nnz{\ma} \cdot r \cdot (\ell + \ell_j))$, where $\ell_j$ is the bit complexity of $(\ma^\top)^j\mb$. Therefore this gives a total running time of 
\[
\Otil(\nnz{\ma} \cdot r \cdot m^2 \cdot \ell).
\]
\end{proof}

\end{document}